\begin{document}
\newcommand{\comments}[1]{\marginpar{\footnotesize #1}} %Comment

\newtheorem{proposition}{Proposition}[section]
\newtheorem{lemma}[proposition]{Lemma}
\newtheorem{sublemma}[proposition]{Sublemma}
\newtheorem{theorem}[proposition]{Theorem}

\newtheorem{maintheorem}{Main Theorem}
\newtheorem{corollary}[proposition]{Corollary}

\newtheorem{ex}[proposition]{Example}

\theoremstyle{remark}

\newtheorem{remark}[proposition]{Remark}

\theoremstyle{definition}
\newtheorem{definition}[proposition]{Definition}
\newcommand{\ovB}{\bar{B}}
\def\Erg{\mathrm{Erg}\, }
\def\cone{\mathbf{C}} % Cone
\def\real{\mathbb{R}}
\def\sphere{\mathbf{S}^{d-1}}% Sphere
\def\integer{\mathbb{Z}}
\def\complex{\mathbb{C}}
\def\BBB{\mathbb{B}}
\def\supp{\mathrm{supp}}
\def\var{\mathrm{var}}
\def\sgn{\mathrm{sgn}}
\def\sp{\mathrm{sp}}
\def\id{\mathrm{id}}
\def\Imm{\mathrm{Image}}
\def\cc{\Subset}
\def\D{\mathrm {d}}
\def\I{i}
\def\E{e}
\def\Lip{\mathrm{Lip}}
\def\BB{\mathcal{B}}
\def\CC{\mathcal{C}}
\def\DD{\mathcal{D}}
\def\EE{\mathcal{E}}
\def\FF{\mathcal{F}}
\def\GG{\mathcal{G}}
\def\II{\mathcal{I}}
\def\JJ{\mathcal{J}}
\def\KK{\mathcal{K}}
\def\LL{\mathcal{L}}
\def\LLL{\mathbb{L}}
\def\MM{\mathcal{M}}
\def\NN{\mathcal{N}}
\def \OO{\mathcal {O}}
\def \PP{\mathcal {P}}
\def \QQ{\mathcal {Q}}
\def \RR{\mathcal {R}}
\def\SS{\mathcal{S}}
\def\TT{\mathcal{T}}
\def\UU{\mathcal{U}}
\def\VV{\mathcal{V}}
\def\YY{\mathcal{Y}}
\def\ZZ{\mathcal{Z}}
\def\FFF{\mathbb{F}}
\def\PPP{\mathbb{P}}

\title[The quest for the ultimate anisotropic space]{The quest for \\the ultimate anisotropic Banach space}

\author{Viviane Baladi} 
\address{Sorbonne Universit\'es, UPMC Univ Paris 06, CNRS, Institut de Math\'ematiques de Jussieu (IMJ-PRG),  4, Place Jussieu, 75005 Paris, France}
\email{viviane.baladi@imj-prg.fr}

\dedicatory{Dedicated to David Ruelle and Yasha Sinai on their 80th birthdays.}
\date{October 28, 2016} 
\begin{abstract}
We present a new scale $\UU^{t,s}_p$ ($s<-t<0$ and $1\le p <\infty$) of anisotropic Banach spaces, 
defined via  Paley--Littlewood,  on which the transfer
operator $\LL_g \varphi=  (g \cdot  \varphi ) \circ T^{-1}$
associated to a hyperbolic dynamical system $T$ has good spectral properties.
When $p=1$ and $t$ is an integer, the spaces are analogous to the ``geometric'' spaces
$\BB^{t,|s+t|}$
considered by Gou\"ezel and Liverani \cite{GL1}. When $p>1$ and $-1+1/p<s<-t<0<t<1/p$,
the spaces are somewhat analogous to the geometric spaces  considered by Demers and Liverani \cite{DL}.
In addition, just like for the ``microlocal'' spaces defined 
by Baladi--Tsujii \cite{BT1}
(or Faure--Roy--Sj\"ostrand \cite{FRS}), the transfer operator acting on $\UU^{t,s}_p$ can be decomposed
into $\LL_{g,b}+\LL_{g,c}$, where $\LL_{g,b}$ has a controlled norm while a suitable power
of $\LL_{g,c}$ is nuclear. This ``nuclear power decomposition''
enhances the Lasota--Yorke bounds and  makes the spaces $\UU^{t,s}_p$
amenable to the kneading
approach of Milnor--Thurson \cite{MT}
(as revisited by Baladi--Ruelle  \cite{BaRu1, BaRu,Ba}) to study dynamical determinants and zeta functions.
\end{abstract}
\thanks{
I express deep thanks to S. Gou\"ezel for the argument  in Appendix~\ref{nomult} (I am
sole responsible for the introduction of any mistakes therein) and  for sharing
his ideas very generously over all these years. Thanks also to R. Anton for  comments,
and to A. Adam for finding several typos in a draft version. 
I am very grateful to W. Sickel and O. Besov for references and comments
on Besov and Triebel--Lizorkin spaces with mixed Lebesgue
norms. I thank the
three anonymous referees for comments and questions which helped me to significantly improve the presentation.
}
%%%%%%%%%%%%%%%%%%%%%

\maketitle

\section{Introduction}

The goal of this note is to briefly present the various types of anisotropic Banach spaces available
in the dynamical systems literature, highlighting their
strengths and weaknesses, and to
propose a new ``microlocal'' scale $\UU^{t,s}_p$ which could address the shortcomings of the existing spaces.
We next   explain what we mean by this.

Let $T$ be\footnote{Most of the  present article can be generalised to transitive locally maximal
compact hyperbolic sets $\Lambda$, assuming sometimes also that
$\Lambda$ is a (transitive) attractor for $T$. See \cite{GL2,Ba}.}
 a transitive $C^r$ Anosov diffeomorphism  on a connected compact Riemann manifold $M$, with $r>1$.
For a complex-valued $C^{r-1}$ weight function $g$ on $M$, define the Ruelle transfer operator by
\begin{equation}\label{leqa}
\LL_g \varphi= (g \cdot \varphi ) \circ T^{-1} \, .
\end{equation}
Here, $\varphi$ can be  a function, for example
in $L_1(dm)$ or $L_2(dm)$, with $dm$ normalised Lebesgue measure. It is however essential
to let $\LL_g$ act on Banach or Hilbert spaces of {\it distributions} in order to obtain a spectrum
with dynamical relevance. 
For $g=|\det DT|^{-1}$, the transfer operator is just the Perron--Frobenius operator 
$$\PP\varphi= \frac{\varphi}{|\det DT|} \circ T^{-1} \, $$ 
of the Anosov diffeomorphism
$T$. Since the pioneering work of Blank--Keller--Liverani \cite{BKL} at the turn of the century, 
it has been established\footnote{For the  Gibbs states associated to operators
$\LL_g$,
weighted by general  positive $g$, we refer to \cite{GL2} and \cite{BT2, Ba}.}
 that
the spectrum of $\PP$ on a suitable Banach space $\BB$ of anisotropic distributions gives information on
the Sinai-Ruelle-Bowen (SRB) measure: The spectral radius is equal to $1$, where 
the maximal eigenvalue is simple, while the corresponding eigenvector is in fact a Radon measure
$\mu$, which is just the SRB measure $\mu$ of $T$. The rest of the spectrum lies in a disc of radius strictly
smaller than $1$,
  which allows  proving the exponential rate of decay of the correlations
$$\int \varphi (\psi \circ T^{k}) \D \mu-\int \varphi \D \mu \int \psi \D \mu$$
(as $k\to \infty$)
for H\"older observables $\psi$ and $\varphi$. An important step towards establishing
these facts is the obtention of an upper bound $\rho_{ess}<1$ for the
essential spectral radius of $\PP$ on $\BB$.
Exponential mixing of the SRB measure had of course been obtained previously
by Ruelle \cite{refTAMS1973} (see also  \cite[(1.26)]{Bow}) who worked with a transfer operator associated to a symbolic model for the dynamics (via Markov partitions),
and used ideas from statistical mechanics. The importance of the early contributions of Ruelle (and
Sinai, see e.g. \cite{Ref69})  cannot be stressed enough, and many fundamental results were obtained 
using the ideas
they imported from statistical mechanics and the tools of
Markov partitions.
However, the introduction of transfer operators   
acting on a Banach space $\BB$ of anisotropic distributions
allowed to better exploit the $C^r$ smoothness of $T$. For example, working with
an enhanced version, due to
Gou\"ezel and Liverani \cite{GL1}, of the anisotropic Banach space from \cite{BKL}, Liverani \cite{Livzeta}
proved 
that, when $g=|\det DT|^{-1}$, the dynamical determinant
\begin{equation}\label{dett}
d_g(z)=\exp - \biggl (\sum_{n=1}^\infty \frac{z^n}{n} \sum_{T^{n}(x)=x} 
\frac{\prod_{k=0}^{n-1} g(T^k(x)) }{|\det (\id- DT_x^{-n})|} \biggr ) \, 
\end{equation}
admits an analytic extension  to a disc of radius $R_\zeta>0$ (this had been established
previously by Kitaev \cite{Ki}, for a larger value $R_\zeta$,  by other methods),
where its zeroes are exactly the inverses of the eigenvalues of modulus $\ge R_\zeta^{-1}$ of $\LL_g$ acting on $\BB$ (this was new).
In addition, Liverani  showed that  $R_\zeta$ can be made arbitrarily large if $r=\infty$.
This represented significant progress with respect to the pioneering results of Ruelle 
\cite{ruelle_dist} and Pollicott \cite{Po} on
dynamical zeta functions and dynamical determinants for hyperbolic diffeomorphisms.
(See also 
\cite{LTzeta}  and \cite{BT2} for enhancements of Liverani's result on $d_g(z)$.)
The anisotropic spaces are also convenient to establish statistical and stochastic stability
as well as linear response \cite{BG1} (simplifying the proofs
of Ruelle \cite{RLR, J}), and they could be used towards the theory of extreme values \cite{BKL16}.

A palette of  Banach spaces of anisotropic distributions appropriate for hyperbolic
dynamics (without the need for Markov partitions nor assuming differentiability
of the dynamical foliations) have been introduced
by dynamicists and semi-classical analysts since the pioneering paper
\cite{BKL}. These spaces come in
scales parametrised by two real numbers $v<0$ and $t>0$,  and, setting aside  the spaces
related to the
classical anisotropic spaces of Triebel \cite{Tr}, they can be roughly classified in two groups: In the first, ``geometric''
group \cite{BKL, GL1}, the norm of $\varphi$  is obtained by taking a supremum of averages (of derivatives $\partial^{\vec t}$
of $\varphi$, for integer $t=|\vec t|$, integrated  against
$C^{|v|}$ test functions) over a class of admissible
leaves (with tangent vectors in stable cones for $T$).
H\"older versions of this space exist \cite{DL,DZ} for  small  $0<t<1$ and $|v|<1$. 
The {\it shortcomings} of this approach are that the bounds for of fractional $0<t<1$ are cumbersome
to obtain, and that (since the kneading approach of \cite{BaRu, BT2} is not available) the relation between the eigenvalues of the transfer
operator and the zeroes of a dynamical determinant can only be obtain in a reduced domain.
In the second group 
\cite{BT1}, the norm in charts of $\varphi$ is the $L_p$ average (for $1<p<\infty$) of $\Delta^{t,v}(\varphi)$,
where the operator $\Delta^{t,v}$ interpolates smoothly 
between $(\id +\Delta)^{v/2}$ (in stable cones in the cotangent space) and $(\id +\Delta)^{t/2}$ (in unstable cones
in the cotangent space), where  $\Delta$ is the Laplacian.
This second ``microlocal'' (or pseudodifferential, or Sobolev) approach is seductive since it
allows using an array of powerful techniques, in particular to study the dynamical determinant, especially
 in the Hilbert case $p=2$. It
  was embraced by the semiclassical community
\cite{FRS}. Its {\it main shortcoming} is that it does not seem to be amenable to the study of piecewise
smooth systems (Appendix~\ref{nomult}, but see also Footnote~\ref{maybe}).
 We mention here that the recent proof \cite{BDL} of exponential decay of correlations
for Sinai billiard flows was obtained by using the H\"older variant of the ``geometric'' spaces in the first group
\cite{DL, DZ}.

 In this paper, we shall
propose a new ``microlocal'' scale $\UU^{t,s}_p$. 
We expect that the {\it kneading operator} strategy \cite{BT2} can be implemented with this new norm, allowing  the study of dynamical
determinants in larger domains than those accessible via the ``geometric'' approach.
More importantly, we believe that, contrarily
to existing ``microlocal'' spaces  in the literature  
(see Appendix~\ref{nomult}),
the spaces $\UU^{t,s}_p$ can be used for  {\it piecewise smooth systems}
 (see Remark~\ref{pw}). In particular,
we hope that the new spaces can be used to obtain good bounds
(in the spirit of the variational principle type
bound \eqref{defQts}) for the essential spectral
radius of piecewise hyperbolic maps in any dimension, if
the boundaries of the smoothness domains satisfy some
transversality assumption with the stable cones, but without assuming bunching
conditions. We hope they can allow 
relating the eigenvalues of the transfer operator
$\LL_g$ with poles of the weighted dynamical zeta function of a piecewise  hyperbolic
map  (in any dimension), adapting the kneading approach of \cite{BaRu, BT2}.
We expect that similar spaces can also be introduced for piecewise hyperbolic flows,  allowing
improvement of the results of \cite{BLiv}. Finally,
spaces of the type $\UU^{t,s}_p$ can perhaps be used also for piecewise  hyperbolic systems with billiard-type singularities
in any dimension. (For piecewise smooth dynamics, suitable  assumptions 
relating complexity and hyperbolicity will be needed, and we hope
that thermodynamic expressions like \eqref{defQts} will allow  a formulation
in terms of ``pressure of the unstable Jacobian on the boundary.'')

The anisotropic spaces $\UU^{t,s}_p$ introduced below are based on Besov spaces 
with different regularity exponents $s<0<t$, but replacing the spatial averaging $L_p(\real^d)$  by
 $\sup_\Gamma L_p(\Gamma)$, for a suitable set
of smooth submanifolds $\Gamma$. If the supremum
were taken over the leaves of a smooth foliation (e.g. $\real^{d_s} \times \{x_{d_u}\}$), this 
would be an instance 
of  a  {\it mixed (Lebesgue) norm} anisotropic Besov 
space (see \cite{BIN}, see \cite{JMHS} for invariance under diffeomorphisms, and see also
\cite{BeNu, Nu}).
However\footnote{Note also that our anisotropic regularity exponents have different signs.} 
we must take in our definition below the supremum over leaves $\Gamma$ ranging  over a set 
which does {\it not} form a foliation.  Because
of this difference, we will not use any mixed norms results  in the proof of
the Lasota--Yorke estimates (they may be useful to show that characteristic functions
are bounded multipliers, see Remark~\ref{pw}).

A caveat is in order here: {\it Even if  the leaves $\Gamma$ are all
horizontal $d_s$-dimensional planes, the mixed norm $\sup_\Gamma L_2(\Gamma)$ is not associated to
a scalar product.}  The norms  $\UU^{t,s}_p$  introduced below also suffer from this handicap.

\medskip

The paper is organised as follows: In Section~ 2, we present  three types of existing
anisotropic spaces (the classical Triebel spaces, the geometric spaces {\it \`a la}
Liverani, and the microlocal spaces) including their strong and weak
points. In Section ~3.1, we explain the
motivation for the  new spaces $\UU^{t,s}_p$, with a recap of the shortcoming of the 
existing spaces.
Section ~3.2 contains Definition~\ref{below} of $\UU^{t,s}_p$, and Section ~3.3
is devoted to comments on this definition (Remark~\ref{pw} there indicates
why the new space could be used for piecewise smooth dynamics). Section ~4 contains Theorem~\ref{refereewish}
which says that, if the stable dimension $d_s=1$, then the essential spectral radius of the transfer operator
$\LL_g$ on $\UU^{t,s}_1$ satisfies the same sharp bounds as those obtained in \cite{BT2}.
(In Remark~\ref{codim} we sketch a proof if $d_s\ge 2$.)
The proof of Theorem~\ref{refereewish}  hinges on the key Lasota--Yorke Lemma~\ref{LLYU}. We
explain in Remark~\ref{nucc} why this lemma should also imply a nuclear power decomposition.
Appendix~ A contains the argument of Gou\"ezel showing that multiplication by
characteristic functions is not a bounded multiplier on the spaces
of \cite{BT1}. In Appendix~ B we compare (heuristically) the spaces
$\UU^{t,s}_1$ with the spaces $\BB^{t,|t+s|}$ of \cite{GL1}.
Finally, Appendix ~C contains some technical material regarding integration by parts,
proper support, and comparison with classical spaces.

%%%%%%%%%%%%%%%%%%%%%%%%

\section{A short   tour in the jungle of anisotropic Banach spaces}

In  this section, we briefly describe the three types of existing
anisotropic spaces used for discrete-time $C^r$ hyperbolic dynamics with $1<r\le \infty$, listed
in chronological order:
\begin{itemize}
\item In \S\ref{Trieb},  Triebel-type ``foliated'' spaces, where  invariant differentiable
foliations, or invariant classes of foliations (assuming bunching),
are used. (This --- classical --- type was not discussed in the introduction.)
\item In \S\ref{tang}  ``geometric'' spaces due to Liverani and co-authors (Blank, Demers, Gou\"ezel, Keller), where strictly invariant
{\it  cones in the  tangent space} are used to define admissible leaves.
(This type belongs to the ``geometric'' group mentioned in the introduction.)
\item In \S\ref{cotang}  ``microlocal'' spaces due to 
Tsujii and co-authors (Baladi, Faure), where strictly invariant {\it cones
 in the  cotangent space,} are used, via Fourier transforms and pseudo-differential
operators. (This type belongs to the ``microlocal'' group mentioned in the introduction.)
The  approach 
used by the semi-classical community, see e.g. \cite{FRS}, is essentially the same.
\end{itemize}

Before we describe these three types of spaces, two observations should be made:

First, a remarkable feature of the new spaces $\UU^{t,s}_p$ introduced in the following section will involve
cones {\it both in the tangent and in the cotangent space:} In the tangent space for
the definition  and proofs, and in the cotangent spaces (only) in the proofs.

Second, the situation of real-analytic hyperbolic dynamics is rather different, and we shall not discuss it
in this note. We just mention that the transfer operators are then compact (in fact nuclear or trace class)
when acting on suitable Banach (or Hilbert) spaces, and that, {\it very roughly,} 
the analogues of the foliation-spaces in \S\ref{Trieb}
are those introduced by Ruelle \cite{ruelle_analytique} and Fried \cite{Fr0}, the analogues of the geometric spaces of \S\ref{tang}
are those of Rugh \cite{Ru1} and Fried \cite{Fr1}, while the microlocal spaces of \S\ref{cotang} are analogous to those of
Faure and Roy ~\cite{FR}.

\medskip
We now move to the definitions of the three types of spaces.
As a preparation for \S\ref{Trieb} and \S\ref{cotang}, we first  recall the Fourier space description
of the classical scale of {\it isotropic} Sobolev  spaces: For $d\ge 1$ and $x\in \real^d$,
$\xi \in \real^d$,
we write $x\xi$ for the scalar product of $x$ and $\xi$. Then
 the Fourier transform
$\FFF$ and its inverse  $\FFF^{-1}$ are defined on the space
\cite{RS}  of {\it rapidly decreasing functions} $\varphi, \psi \in \SS$  by
\begin{align}
\label{Ftran}
\FFF(\varphi)(\xi)&=
\int_{\real^d} \E^{-\I x\xi} \varphi(x)\D x\, , \quad \xi \in \real ^d\, , \\
\label{Ftrani}\FFF^{-1}(\psi)(x)&= \frac{1}{(2\pi)^d}
\int_{\real^d} \E ^{\I x\xi}  \psi(\xi)\D \xi \, , \quad x\in \real ^d \, ,
\end{align}
and then extended to the space \cite{RS} of {\it temperate distributions} $\varphi, \psi \in\SS'$  as usual.
For $x\in  \real^d$, $\xi \in \real^d$, and suitable $a:\real^{2d} \to \real$ and $\varphi: \real^d \to \complex$, we shall use the notation  
$$a^{Op}(\varphi)(x)= \FFF^{-1} (a(x,\cdot)\cdot \FFF(\varphi) )(x)\, ,\quad
x \in \real^d \, .
$$
(We say that $a^{Op}$ is the operator associated to the ``symbol'' $a$.)
Note that {\it if the function $a$ only depends on $\xi$} then
\begin{equation}\label{neat}
a^{Op} (\varphi)=(\FFF^{-1} a) * \varphi\, , 
\end{equation}
which implies
$\|a^{Op} \varphi \|_{L_p} \le \|\FFF^{-1} a\|_1 \|\varphi\|_{L_p}$ for all
$n$, by  Young's inequality  in $L_p$ for $1\le p\le \infty$.

The classical  Sobolev spaces $H^t_p$
for $1<p<\infty$ and $t\in \real$ can be defined  by
\begin{equation}
H^{t}_p(\real^d):= (\id + \Delta)^{-t/2}  (L_p(\real^d))\, ,
\end{equation}
(using fractional powers \cite{Stein}) or, equivalently,
as the Banach space of those distributions in $\SS'$ so that
$$
\|\varphi\|_{H^t_p}:= \|((1+|\xi|^2)^{t/2})^{Op}(\varphi)\|_{L_p(\real^d)}< \infty\, .
$$
It is known that $\SS$ is dense in $H^{t}_p(\real^d)$,
so that  $H^{t}_p(\real^d)$
coincides with the closure of $\SS$ for the norm 
$\|\varphi\|_{H^t_p}$. 

Finally, multiplier results imply that the norm $\|\varphi\|_{H^t_p}$ is equivalent
to the following Paley--Littlewood norm: 
Fix a $C^\infty$ function $\chi:\real_+\to [0,1]$ with
\begin{equation}\label{defchi}
\chi(x)=
1, \quad \mbox{for $x\le 1$,}\qquad
\chi(x)=0, \quad \mbox{for $x\ge 2$.}
\end{equation} 
Define $\psi_n:\real^d\to [0,1]$ for $n\in \mathbb Z_+$, by
$\psi_0(\xi)=\chi(\|\xi\|)$, and
\begin{equation}\label{2.49}
\psi_n(\xi)=\chi(2^{-n}\|\xi\|)-\chi(2^{-n+1}\|\xi\|) \, , 
\quad n\ge 1 \,  .
\end{equation}
Note that $\sup_n \|\FFF^{-1} \psi_n\|_{L_1}<\infty$ and for every multi-index $\beta$, there exists a constant $C_\beta$ such that
\begin{equation}\label{betagood}
\|\partial^\beta \psi_n\|_{L_\infty} \le C_\beta 2^{-n|\beta|}\, ,
\qquad  \forall\,  n\ge 0\, .
\end{equation}
Then, setting, $\varphi_n=\psi_n^{Op} \varphi$ 
the Paley--Littlewood norm which is equivalent \cite[\S 2.1]{RS} with $\|\varphi\|_{H^t_p}$ is given by 
$$ 
\biggl \|\bigl (\sum_{n\ge 0} \; 4^{tn} |\varphi_n |^2\bigr )^{1/2}\biggr  \|_{L_p(\real^d)}
\, .$$

The norm above is a Triebel--Lizorkin-type norm: We first take an $\ell^2$ norm over the indices
$n$ and then the $L_p$
norm over the space $\real^d$. 
The Besov(--H\"older--Zygmund--Lipschitz) scales  have a Paley--Littlewod description
of Besov type, taking first the spatial $L_p$ norm and then an $\ell^\infty$ norm over indices.
There are other variants of the Besov and Triebel--Lizorkin scales \cite{RS}.

We conclude with the obvious remark that
if $p=2$ then $H^t_2(\real^d)$ is a Hilbert space, otherwise
$H^t_p(\real^d)$  is only a (complex) Banach space.

\medskip

We now return to the Anosov situation. Let $\real^d= \real^{d_s}\times \real^{d_u}$, with
$d_s\ge 1$ and $d_u \ge 1$ the  stable and unstable
dimensions of our Anosov diffeomorphism $T$. Let also
$\lambda_s<1<\lambda_u$
be the weakest asymptotic contraction and weakest asymptotic expansion of $T$.
Using an adapted Mather metric, we can assume that 
 the expansion of $D_x T$ along $E^u_x$
 is stronger than $\lambda_u$, while its contraction
along $E^s_x$ is stronger than $\lambda_s$, and that  the angle
between $E^s_x$ and $E^u_x$  is everywhere arbitrarily
close to $\pi/2$. 
We proceed with definitions of the main existing scales of anisotropic
spaces in the following subsections,
{\it emphasizing that  the anisotropic spaces will involve positive regularity in the
unstable directions of $T$ and negative regularity in the stable directions
of $T$.}

%%%%%%%%%%%%%%%%%%%%
\subsection{Triebel-type ``foliated'' spaces \cite{BCinfty,BG1,BG2,BLiv}}
\label{Trieb}

Denote the full Laplacian by $\Delta$
and   the stable and unstable ``foliated" Laplacians by
$\Delta_s=\sum_{j=1}^{d_s}
\partial^2_{x_j}
$ and $\Delta_u=\sum_{j=d_s+1}^{d}
\partial^2_{x_j}
$. 
Triebel spaces such as
\begin{equation}
H^{t,s}_p(\real^d):= (\id + \Delta)^{-t/2} (\id+\Delta_s)^{-s/2} (L_p(\real^d))\, ,
\label{SobTriebel2}
\end{equation}
for $1<p<\infty$ and $t, s \in \real$ have been well studied
 \cite{Tr, TrB}. The choices $s<-t<0$ will be natural for us.
The Triebel spaces\footnote{Note that the total fractional
derivative in the stable directions is $t+s$ and not $s$.} $H^{t,s}_p$, as well as the similar spaces
\begin{equation}\label{v1}
(\id + \Delta)^{-t/2} (\id +\Delta_u)^{-u/2} (L_p(\real^d))
\end{equation}
(with $t<0$ fractional global derivatives and $t+u>0$ unstable derivatives) and 
\begin{equation}\label{v2}
(\id + \Delta_s)^{-s/2} (\id +\Delta_u)^{-u/2} (L_p(\real^d))
\end{equation}
(with $s<0$ fractional stable derivatives and $u>0$ fractional unstable derivatives) 
all have a definition using the Fourier transform. 
For $H^{t,s}_p$, we have
$$
\|\varphi\|_{H^{t,s}_p} \simeq \|((1+|\xi|^2+|\eta|^2)^{t/2}(1+|\eta|^2)^{s/2})^{Op}(\varphi)\|_{L_p(\real^d)}\, ,
$$
where $\simeq$ means that the norms are equivalent, and
where $\xi \in \real^{d_u}$ and $\eta \in \real^{d_s}$.

Finally, $\SS$ is dense in each of the spaces just described
\cite{Tr}, and  each space has a Paley--Littlewood description, by standard multiplier results.

Such Triebel spaces can be used for transfer operators under the (very strong) assumption that at least
one of the dynamical foliations of $T$ (stable or unstable) is at least
$C^{1+\epsilon}$. To define, e.g., $H^{t,s}_p(M)$, assuming that the stable foliation 
of $T$ is $C^{1+\epsilon}$,  consider 
 a finite system of $C^{1+\epsilon}$ local charts $\{(V_\omega, \kappa_\omega)\}_{\omega\in \Omega}$, that is,
a cover $\VV=\{V_\omega\}$ of $M$ by open subsets
$V_\omega$ and diffeomorphisms
$\kappa_\omega : U_ \omega\to V_\omega$ such that $M \subset \cup_\omega V_\omega$, and  
$U_\omega$  
is a bounded open subset of $\real^d$ for each $\omega\in \Omega$, assuming
in addition that for each small
enough local stable leaf
$W=W^s_x$ of $T$, the image
$\kappa_\omega^{-1}(W \cap V_\omega)$ is horizontal, that is
a subset of $\real^{d_s} \times \{y_u(x)\}$
for some fixed $y_u(x)\in \real^{d_u}$.
Letting $\{\theta_\omega\}$ be a $C^{\infty}$ finite partition of  unity for $M$ subordinate to the cover $\VV$,
the Banach space  $H^{t,s}_p(M)$ is then defined to be the closure of $C^\infty(M)$
for the norm
$$
\sum_\omega \| (\theta_\omega \varphi) \circ \kappa_\omega\|_{H^{t,s}_p} \, .
$$

Transfer operators acting
on anisotropic Banach spaces
based on $H^{t,s}_p$ were first studied  in
\cite{BCinfty} (under the stronger assumption that the foliations be $C^\infty$, see
\cite{BG1} for  $C^{1+\epsilon}$ foliations). 
A modification 
$\widetilde H^{s,t}_p(M)$ of the 
space allows working in more generality,
replacing the differentiability assumption on the foliations
by a bunching condition on the Lyapunov exponents \cite{BG2, BLiv}. 
The idea is to consider  a class of  foliations admissible
with respect to stable cones (the class -- but not the individual foliations --- being invariant under the
dynamics).

\medskip

{\bf Upper bound for the essential spectral radius of $\PP$:}
When $T$ is an Anosov diffeomorphism {\it satisfying bunching conditions} \cite[(2.3)--(2.4)]{BG2},
the results of \cite{BG1,BG2} imply, for the modified norm described above:
$$
\rho_{ess}(\PP|_{\widetilde H^{s,t}_p(M)})\le \limsup_{n \to \infty} (\sup|\det DT^{n}|^{1/p-1})^{1/n}
  \max\{\lambda_{u}^{-t},\lambda_{s}^{-t-s}\}\, ,
$$
where $s<-t<0$ with $t-s<r-1$.
 For the variant given by  \eqref{v1}, we get, under suitable bunching conditions, 
$$\limsup_{n \to \infty} (\sup|\det DT^{n}|^{1/p-1})^{1/n}
  \max\{\lambda_{u}^{-t-u},\lambda_{s}^{-t}\}\, , $$ 
where $u-t<r-1$.
If both stable and unstable foliations are differentiable,
we get for the Triebel space given by \eqref{v2}, 
$$\limsup_{n \to \infty} (\sup|\det DT^{n}|^{1/p-1})^{1/n}
  \max\{\lambda_{u}^{-u},\lambda_{s}^{-s}\}\, , $$ where $u-s<r-1$.
These bounds give the best results when $p\to 1$.
See also \cite{BG1,BG2,Ba} for more general weighted operators $\LL_g$.

\smallskip

{\bf Advantages:}
For $p=2$ we get a Hilbert space.
Strichartz proved  \cite{Str}  that
$H^t_p(\real^d)$ is invariant under multiplication by characteristic functions
of  domains $E$ 
with piecewise smooth boundaries if  $-1+1/p<t<1/p$. 
This property is inherited  \cite{BG1} by $H^{t,s}_p$  if
$-1+1/p<s\le t<1/p$, as long as the boundary of $E$
satisfies some transversality condition with respect to 
the stable foliation (and similarly for \eqref{v1}  and \eqref{v2},
as well as the
variants in \cite{BG2,BLiv}, mutatis mutandis). This allows the study of
piecewise cone hyperbolic systems satisfying bunching 
(as well  as complexity and transversality) conditions
\cite{BG1,BG2,BLiv}.

\smallskip

{\bf Limitations:}
The bunching condition is a strong limitation especially in high dimensions.
Also, the spaces in \cite{BG2, BLiv} do not seem 
adapted\footnote{The compact embedding lemma causes problems, since it makes it necessary to require
dynamically invariant bounds on
the Jacobians of the  charts which trivialise
the admissible foliations.} to study
systems such as discrete-time billiards, where the derivatives of the map may (and do) blow up at the boundaries
of the smoothness domains.

%%%%%%%%%%%%%%%
\subsection{Cones in the tangent space:  The ``geometric'' spaces $\BB^{t,v}$ of Gou\"ezel--Liverani}
\label{tang}

The idea for these spaces was introduced in \cite{BKL} and perfected in
\cite{GL1, GL2} (in particular the averages over the whole manifold used
in \cite{BKL} were replaced  there by averages over admissible stable leaves, as in
\cite{Liv00}, and as described below).
We first recall the notion of admissible stable leaves from 
Gou\"ezel and Liverani \cite[\S 3]{GL1}.
For  $\kappa>0$, we define the stable
cone at $x\in V$ by
  \begin{equation*}
  \CC^s(x)=\left\{ w_1+w_2 \in T_x M \mid w_1\in E^s(x)\, , w_2 \perp E^s(x)\, , \|w_2\|
  \leq \kappa \|w_1\| \right\} \, .
  \end{equation*}
If $\kappa>0$ is small enough then
$D_x T^{-1}(\CC^s(x)\setminus \{0\})$ lies in
the interior of $\CC^s(T (x))$, and $D_x T^{-1}$ expands the
vectors in $\CC^s(x)$ by $\lambda_s^{-1}$.

\begin{definition}[Admissible charts]\label{notat0}
There exist an integer $N$, 
real numbers $\epsilon_\omega \in (0,1)$, and $C^{r}$ coordinate charts
 $\kappa_\omega$  defined on 
$(-\epsilon_\omega,\epsilon_\omega)^d\subset \real^d$, such
that $M$ is covered by the open sets
$\bigl(\kappa_\omega((-\epsilon_\omega/2,\epsilon_\omega/2)^d)\bigr)_{\omega=1\ldots N}$,
and the following conditions hold: $D\kappa_\omega(0)$ is an isometry,  $D\kappa_\omega(0)\cdot \bigl(\real^{d_s}\times\{0\}\bigr)= E^s(\kappa_\omega(0))$,
 and the $C^{r}$-norms of $\kappa_\omega$ and its inverse are bounded by
$1+\kappa$.
\end{definition}

 Pick $c_\omega\in (\kappa,2\kappa)$
such that the  cone in charts 
$$\CC^s_\omega=\{ w_1+w_2\in \real^d
\mid w_1\in \real^{d_s}\times \{0\}, w_2\in \{0\}\times \real^{d_u}, \|w_2\|\leq
c_\omega \|w_1\|\}
$$  
satisfies   $D_x\kappa_\omega (\CC^s_\omega) \supset \CC^s(\kappa_\omega (x))$ and $D_{\kappa_\omega(x)} T^{-1}(
D\kappa_\omega(x) \CC^s_\omega) \subset \CC^s(T^{-1}( \kappa_\omega(x)))$
for any  $x\in
(-\epsilon_\omega,\epsilon_\omega)^d$.
Let $G_\omega(C_0)$ be the set of graphs of $C^{r}$ maps $\gamma: U_\gamma \to (-\epsilon_\omega,\epsilon_\omega)^{d_u}$ defined on a
subset $U_\gamma$ of $(-\epsilon_\omega,\epsilon_\omega)^{d_s}$, with 
$|D\gamma| <  c_\omega$ and $|\gamma|_{C^{r}} \le C_0$. 
(In particular, the tangent space to the graph of $\gamma$ belongs to
the interior of the cone $\CC^s_\omega$.)
Uniform hyperbolicity of $T$ implies (see \cite[Lemma 3.1]{GL1}) that
 if $C_0$ is large enough, then there exists $C_0'<C_0$
such that, for any $\Gamma\in G_\omega(C_0)$ and any $\omega$, the
set $\kappa_\omega^{-1}(T^{-1} (\kappa_\omega(\Gamma)))$ is included in $G_\omega(C_0')$.

\begin{definition}[Admissible graphs and admissible stable
    leaves]\label{notat}
    An admissible graph is  a $C^r$ map $\gamma$ defined on a
ball 
$\ovB(w,K_1\delta)\subset (-2\epsilon_\omega/3,2\epsilon_\omega/3)^{d_s}
$ 
for small enough $\delta>0$
and
large
enough $K_1$, taking
its values in $(-2\epsilon_\omega/3,2\epsilon_\omega/3)^{d_u}$
 with
$\operatorname{range}(\id,\gamma)\in G_\omega(C_0)$. 
An admissible stable leaf
is  $\Gamma=\kappa_\omega \circ (\id,\gamma)(\ovB(w,\delta))$
where $\gamma:\ovB(w,K_1\delta) \to \real^{d_u}$
   is an admissible graph on  $B_\omega:=(-2\epsilon_\omega/3,2\epsilon_\omega/3)^{d_s}$.
\end{definition}

Let  $t\ge 1$ be an integer, and let
 $v>0$ be real,  with $t+v<r-1$.  The definition of
the norm of $\BB^{t,v}$ in coordinates (see \cite[Lemma 3.2]{GL1}) is then
  \begin{equation}\label{st1}
  \|\varphi\|_{t,v} =  \max_{\substack{0 \le t'\le t\\ t'\in \integer}}\,  \max_{\substack{|\vec t|=t'\\1\le \omega\le N}}\;
  \sup_{\gamma  }\,\, 
  \sup_{ |\phi|_{C^{v+t'}} \le 1}\,\,  \int_{B(w,\delta)} 
  [\partial^{\vec t}( \varphi \circ \kappa_\omega)] \circ (\id,\gamma) \cdot \phi \, dm_{d_s}\, ,
  \end{equation}
where the test function $\phi$ is compactly supported in $\ovB(w,\delta)$, 
the measure $dm_{d_s}$ is  Lebesgue
measure on $\real^{d_s}$,
and $\gamma$ ranges over admissible graphs on $B_\omega$.
Define $\BB^{t,v}$ to be the closure of $C^{r-1}(M)$ for the norm $ \|\varphi\|_{t,v}$.
(In \cite{GL1}, the parameter $t$ was noted $p$ while $v$ was noted $q$.)

\medskip

{\bf Upper bound for the essential spectral radius of $\PP$ and $\LL_g$:}
If $r >2$, Gou\"ezel and Liverani show \cite{GL1,GL2}
$$\rho_{ess}(\PP|_{\BB^{t,v}})\le \max \{ \lambda_u^{-t}, \lambda_s^{v}\}\, ,
\quad \rho_{ess}(\LL_g|_{\BB^{t,v}})\le
e^{P_{top}(\log( |g|\det DT|_{E_s}))} \max \{ \lambda_u^{-t}, \lambda_s^{v}\}\
$$
under the constraints ($t$ is an integer and $v$ is real) 
$$
1\le t < (r-1)-v <r-1 \, .
$$ 
(See also \cite{GL2} for operators $\LL_g$ with more general weights.)

\smallskip 
{\bf Advantages:}
One of the strong points of the approach above using  admissible leaves is that the
norm can be modified to accommodate systems with singularities, including
discrete and continuous-time billiards.
We refer to \cite{DL, DZ, BDL}. 

\smallskip

{\bf Limitations:} There is no Hilbert space in these scales.
The kneading approach to dynamical determinants is not available and is replaced
by other methods inspired from D. Dolgopyat's thesis
\cite{LTzeta, GLP}. Unfortunately, these methods give a value for $R_\zeta$
which is of the order of $\rho_{ess}^{-1/2}$.

Since $t>0$ must be an integer, the regularity assumption on $T$ is $C^r$ for $r>2$
and  the  constraint on $v$ is $v<r-1-t\le r-2$. The thermodynamic
analysis in \cite[\S3]{BT2} giving the sharp bound \eqref{defQts}  (see also \cite{Ba})
is not available for these spaces.
 
The analogues of the spaces for piecewise smooth systems
\cite{DL, DZ, BDL} are not very easy to handle (stable
and unstable norms must be handled separately, and the 
unstable norm  involves  H\"older quotients for $t<1$) and have only
been implemented in dimension two for maps and three for flows.

%%%%%%%%%%%%%%%%%%

\subsection{Cones in the cotangent space:  ``Microlocal'' spaces   \cite{BT1,BT2,FRS}}
\label{cotang}

We focus on the space $W^{t,s}_{p,\dagger}$ from \cite{BT1}.
We need some notation.
A cone  in $\real^d$ is a subset 
which is invariant under scalar
multiplication.
For two cones $\cone$ and $\cone'$
in $\real^d$, we write 
$\cone \cc \cone'$  
if
$\overline \cone\subset \mbox{ interior} \, (\cone' )\cup \{0\}$.
We say that a cone $\cone$ is $d'$-dimensional 
 if $d'\ge 1$
is the maximal dimension of a linear subset of $\cone$.

\begin{definition}
\label{defpol} A cone pair is 
$\cone_\pm=(\cone_+,\cone_-)$, 
where  $\cone_+$ and $\cone_-$ are
closed cones in $\real^d$,  with nonempty interiors, of respective
dimensions $d_u$ and $d_s$ and so that $\cone_+\cap\cone_-=\{0\}$.
A cone system is a quadruple
$$\Theta=(\cone_\pm,\varphi_+,\varphi_-)\, ,
\qquad \varphi_-=1-\varphi_+\, , $$  
with $\cone_\pm=(\cone_+,\cone_-)$ a cone pair and
$\varphi_\pm:\sphere\to [0,1]$ two
$C^{\infty}$ functions on the unit sphere $\sphere$ in $\real^d$ satisfying
$$
 \varphi_+(\xi)=
1 \mbox{ if $\xi\in \sphere\cap \cone_{+}$,}\qquad
\varphi_+(\xi)=0 \mbox{ if $\xi\in \sphere\cap \cone_{-}$.}
$$
\end{definition}

Introduce for real numbers $t$ and $v$  the functions 
\[
\Psi_{t,\Theta_+}(\xi)=(1+\|\xi\|^2)^{t/2}\varphi_+
\left (\frac{\xi}{\|\xi\|}\right )\quad\mbox{and}
\quad \Psi_{v,\Theta_-}(\xi)=(1+\|\xi\|^2)^{v/2}
\varphi_-\biggl (\frac{\xi}{\|\xi\|} \biggr )\, .
\]
For a cone system $\Theta$, a compact set $K\subset \real^d$ with nonempty interior,
and $\varphi\in C^\infty(K)$,
we define norms  
for $1<p<\infty$ and $v \le 0 \le t$ by
\begin{eqnarray}
\label{daggernorms}  \|\varphi\|_{W^{\Theta, t,v}_{p,\dagger}}
&=&\|\Psi_{t,\Theta_+}^{Op}(\varphi)\|_{L_p}+
\|\Psi_{v,\Theta_-}^{Op}(\varphi)\|_{L_p}\, .
\end{eqnarray}

We next give the local definition of one of the  spaces\footnote{There are two other variants of the norms given in \cite{BT1}, $W^{\Theta,t,v}_{p,\dagger\dagger}$
and $W^{\Theta,t,v}_{p}$. For the present
purposes we need not  enter into details. We just mention that the three
norms are related, but not equivalent,  that most of the work is done with $W^{\Theta,t,v}_{p}$,
which is given in Paley--Littlewood form, and that the notation in
\cite{BT1} involved a $*$ that we chose to discard. See \cite[App. A]{BT1}.}
introduced in \cite{BT1}:

\begin{definition}[Anisotropic Sobolev spaces $W^{\Theta, t,v}_{p,\dagger}(K)$ in $\real^d$]
For a cone system $\Theta$,
a compact set $K\subset \real^d$ with nonempty interior, $1\le p\le\infty$ and $v \le 0
\le t$, let
$W^{\Theta,t,v}_{p,\dagger}(K)$ be the completion of 
$C^{\infty}(K)$ with respect to $\|\cdot \|_{W^{\Theta, t,v}_{p,\dagger}}$.
\end{definition}

\begin{definition}[Admissible charts and partition of unity
for $T$]\label{ChP}
Admissible charts and partition of unity
for $T$ are:
A finite system of $C^{\infty}$ local charts $\{(V_\omega, \kappa_\omega)\}_{\omega\in \Omega}$, with 
open subsets
$V_\omega\subset M$, and diffeomorphisms
$\kappa_\omega : U_ \omega\to V_\omega$ such that $M \subset \cup_\omega V_\omega$, and  
$U_\omega$  
is a bounded open subset of $\real^d$ for each $\omega\in \Omega$, together
with a  finite $C^{\infty}$ partition of  unity  $\{\theta_\omega\}$ for $M$,
   subordinate to the cover $\VV=\{V_\omega\}$. 
\end{definition}

\begin{definition}[Admissible cone systems
for $T$]\label{CSH}
Since $T$ is Anosov, we may choose local charts indexed by a finite set
$\Omega$ as in Definition~\ref{ChP},
and cone pairs
$
\{\cone_{\omega,\pm}=(\cone_{\omega,+}, \cone_{\omega,-})\}_{\omega\in \Omega}
$, 
so   that the following conditions hold\footnote{$\cone_{\omega,\pm}$ are  locally constant cone fields
in the cotangent bundle $T^* \real^d$, so that the conditions are expressed
with respect to normal subspaces.}:
\begin{itemize}
\item If $x\in V_\omega$, the cone
$(D\kappa_\omega^{-1})^{*}_x(\cone_{\omega,+})$    
contains the ($d_u$-dimensional) normal subspace of $E^s(x)$, and the cone
$(D\kappa_\omega^{-1})^{*}_x(\cone_{\omega,-})$
contains the ($d_s$-dimensional)
normal subspace of  $E^u(x)$.
\item 
If  $V_{\omega'\omega}=T(V_\omega)\cap V_{\omega'}\ne \emptyset$,  the $C^r$ map 
corresponding to $T^{-1}$ in charts, 
\[
F=\kappa^{-1}_\omega
\circ T^{-1}\circ \kappa_{\omega'}
:\kappa_{\omega'}^{-1}(V_{\omega'\omega}) \to U_\omega \, ,
\]
extends to a  bilipschitz $C^1$
diffeomorphism of $\real^d$ 
so that, using $A^{tr}$ to denote the transposition of a matrix $A$,
$$DF_{x}^{tr}(\real^d\setminus \cone_{\omega,+}) \cc \cone_{\omega',-}\, , \qquad 
\forall x\in \real^d\, .
$$
(We say that $F$ is  {\it cone hyperbolic} from 
$\cone_{\omega,\pm}$ to $\cone_{\omega',\pm}$.)
\item
In addition, there exists, for each $x,y$,
a linear transformation $\mathbb L_{xy}$ satisfying 
$(\mathbb L_{xy})^{tr}(\real^d\setminus \cone_{+}) \cc \cone'_{-}$
and 
$\mathbb L_{xy}(x-y)=F(x)- F(y)$. (We say that $F$ is {\it regular
 cone hyperbolic} from 
$\cone_{\omega,\pm}$ to $\cone_{\omega',\pm}$.)
\end{itemize}
\end{definition}

The anisotropic spaces introduced\footnote{\label{beware}Note that \cite{BT2} uses both cones in tangent and cotangent space, but the averaging
over admissible leaves does not play the same role there as in \cite{GL1, DL} or
as in the definition of $\UU^{t,s}_p$ below.} in   \cite{BT2} and in  \cite{FRS} are variants
of the spaces $W^{t,v}_{p,\dagger}$. 
(The semiclassical approach \cite{FRS} takes $p=2$ and uses ``escape functions,'' which play
the role of our cone systems.) 

\medskip

{\bf Upper bound for the essential spectral radius of $\PP$:}
$$\rho_{ess}(\PP|_{W^{t,v}_p})\le \limsup_{m \to \infty} (\sup |\det DT^{m}|^{-1+1/p} )^{1/m}
\max \{ \lambda_u^{-t}, \lambda_s^{-v}\} \, .$$ 
The constraints are $v<0<t<r-1+v$, and we get the best results when $p\to 1$.
(The bound in \cite{BT1}
is in fact slightly more favorable.)

Besov versions  $C_*^{t,v}$ of the spaces are also considered in \cite{BT1}. The bound
for
the essential spectral radius of $\PP$ on  $C_*^{t,v}$
is  $\le \max \{ \lambda_u^{-t}, \lambda_s^{-v}\}$,
for the same constraints  $s<0<t<r-1+v$.

For the variant of the Banach space  constructed in \cite{BT2}, a
sharper bound is obtained for $\rho_{ess}(\PP|_{W^{t,v}_p})$
\begin{align*}
\nonumber &\exp \sup_{\mu \in \Erg(T)}
\Bigl \{h_\mu(T) + \chi_\mu\left ((\det (DT|_{E^u})^{-1}) \right ) \\
&\qquad\qquad \qquad\qquad\qquad\qquad\quad+
\max\bigl \{t \chi_\mu(DT^{-1}|_{E^u}), |v| \chi_\mu(DT|_{E^s} )\bigr \}
\Bigr \}\, ,
\end{align*}
where $\Erg(T)$ denotes the set of  $T$-invariant 
ergodic Borel probability
measures,
$h_\mu(T)$ denotes  the metric entropy of $(\mu,T)$,
and $\chi_\mu(A)\in \real \cup \{-\infty\}$ is the largest Lyapunov
exponent of a linear cocycle $A$ over $T$.
(For general operators $\LL_g$ the bound from \cite{BT2} is stated below in \eqref{defQts}.)

\smallskip

{\bf Advantages:}
The bound \eqref{defQts} for  the essential
spectral radius $\rho_{ess}$ of $\LL_g$ on the spaces of \cite{BT2} is the sharpest known.
(The proof uses  thermodynamic sums via suitable partitions of unity
and fragmentation--reconstruction lemmas.)

The nuclear power decomposition obtained in \cite{BT2,Ba}  allows  
implementing the kneading operator approach to obtain  the sharpest
known estimate for $R_\zeta$, of the order of $\rho_{ess}^{-1}$ (as in  \cite{Ki}) for the radius
of holomorphy of the weighted
dynamical determinant \eqref{dett}. 

For $p=2$ we get a Hilbert space.

The variants introduced by the semi-classical community 
(following the work of Faure--Roy--Sj\"ostrand, \cite{FRS,FaTs1})
have led to spectacular results
on hyperbolic flows which are beyond the scope of the present paper.

\smallskip

{\bf Limitations:}
Multiplication by the characteristic function
of a domain (however smooth the boundary of that domain, and even if its
boundary is transversal to the cones) is in general not a bounded operator on the spaces 
$W^{t,s}_{p,\dagger}$ from \cite{BT1} (see Appendix~\ref{nomult}). This fact, which was first noticed
by Gou\"ezel \cite{Go0}, is
a serious obstruction  to study piecewise smooth systems.  The other
 spaces in \cite{BT1,BT2,FRS} also appear to suffer from this limitation.

Note also that the Leibniz\footnote{A Leibniz bound is a bound on the
norm of $f \varphi$, for smooth enough $f$, in terms of the norm of
$\varphi$ and the derivatives or modulus of continuity of $f$.} bounds for the spaces  in \cite{BT1, BT2}
 require  different cone systems in the left-hand and
right-hand sides, see e.g. the proof of \cite[Prop. 7.2]{BT1} or \cite{Ba}.

We end with the limitations of the semi-classical variant of the
spaces \cite{FRS}: The  pseudodifferential tools used there only
work if $r$ is large enough, depending on $d$. Also, the thermodynamic sums leading
to the good bound \eqref{defQts}  obtained in \cite{BT2} for the essential spectral
radius are not explicitly available  there.
  
%%%%%%%%%%%%%%%%%%%%%%%%%%%%%%

\section{A Paley--Littlewood avatar of the Demers--Gou\"ezel--Liverani spaces: $\UU^{t,s}_p$}
\label{SOBS}

\subsection{Motivation}
\label{motivv}

In this section, we give a ``microlocal'' (Paley--Littlewood) definition  of  spaces $\UU^{t,s}_p$  with $s<-t<0$
which are inspired by the  ``geometric'' spaces (see Appendix~\ref{willbeneeded})
$\BB^{t,|s+t|}$  from \cite{GL1} discussed in \S\ref{tang}.

Before defining the new spaces, we list the advantages of the new scale with respect to
the existing ones:

\begin{itemize}
\item
Compared to the Triebel (foliation) norms \cite{BCinfty,BG1,BG2} presented in \S\ref{Trieb} the advantage
is that, since we replace the foliations by ``free'' admissible leaves
and use mixed Lebesgue-norms, we do not need bunching assumptions\footnote{Iterating Triebel anisotropic spaces $H^{t,s}$ via admissible charts, even with a 
mixed norm --- supremum over verticals
of an $L_p$ norm over horizontals --- requires bunching assumptions 
\cite{BG2} to obtain invariance
of charts if the stable foliation of $T$ is not
smooth, and also control of Jacobians, not
available  for Sinai billiards.} and we can also hope
to study piecewise hyperbolic systems, even with billiard-type singularities.
Indeed, when iterating,  we handle
 the global derivative ($(\id +\Delta)^{t/2}$ with $t>0$) and
the foliated derivative (of the type $(\id +\Delta_s)^{s/2}$, with $s<0$, along admissible stable
leaves of $T$)  almost separately (except  for the use of \eqref{magic3} to couple wave packets
for $\real^d$ and for a stable leaf $\Gamma$ in the proof of
Sublemma~\ref{lesublemma}). (See also Remark~ \ref{mixx}.)
\item
With respect to the geometric norms $\BB^{t,|v|}$
discussed in \S\ref{tang} the advantage is that we may now consider all real parameters  $t>0$,
while Gou\"ezel--Liverani \cite{GL1,GL2} were 
limited \footnote{Demers--Liverani \cite{DL} only consider two-dimensional
systems and require  not very handy H\"older-type ratios to handle regularity  $t <1$,
see also \cite{DZ,BDL}.} to integer $t\ge 1$. This gives sharper bounds, also in view of the
possibility of using thermodynamic sums as in \cite{BT2}.

Also, since the decomposition of the transfer operators given in the Lasota--Yorke
Lemma ~\ref{LLYU} (see Remark~\ref{nucc})
is of ``nuclear power'' type, we expect 
that we can carry out the kneading operator
arguments of Milnor--Thurston \cite{MT} as revisited
in \cite{BaRu} and, especially, \cite{BT2} (see also \cite{Ba}). This 
would allow improving  on the results 
of Liverani et al. \cite{LTzeta} (and the results from the semiclassical community, which often
require large differentiability in large dimension) on the
dynamical determinant \eqref{dett}, also potentially for
piecewise smooth systems and 
for continuous-time dynamics (flows) especially in high dimension or low regularity.
\item
With respect to the microlocal
norms from \cite{BT1, BT2, FRS} discussed in \S\ref{cotang} (see  Appendix~\ref{nomult}), 
the advantage is that, for $p>1$, $t<1/p$, and $s>-1+1/p$,
we may hope to work with spaces $\UU^{t,s}_p$ in piecewise smooth hyperbolic situations (like in \cite{DL}
or \cite{DZ}, see Remark~\ref{DLZ})
and piecewise hyperbolic systems with billiard-type singularities like \cite{DZ,BDL}.
(See Remark~ \ref{pw}.)

Linear response was recently obtained \cite{BKL16} for hyperbolic systems
and  some discontinuous observables
by using spaces $\BB^{t,|s+t|}$ from \cite{GL1}, and we may hope to also prove this
result by using $\UU^{t,s}_p$.

Other positive aspects with respect to the
spaces of \cite{BT1,BT2} could be a more straightforward  Leibniz inequality, see the
comment after Corollary~\ref{LeibSobhh}, 
and a more direct \cite{Ba} proof of the relation between maximal
eigenvectors and Gibbs states for general positive weights $g$, in particular a better understanding
of induced measures on quasi-unstable leaves \cite{GL2}.
\end{itemize}

We end by mentioning  that both the definition of the flat trace \cite{BT2, Ba} 
(which is an ingredient of the kneading operator argument) and the Dolgopyat
estimates \cite{BLiv} (for flows) are essentially norm-independent.

%%%%%%%%%%%%%%%%%%%%

\subsection{Paley--Littlewood definition of $\UU^{t,s}_p$}

 We shall use the cone systems $\Theta$ 
from Definition~\ref{defpol}. The other 
key ingredient is  adapted\footnote{The submanifolds $\Gamma$ there were only
assumed to be $C^1$ and the condition on $C_\FF$ was absent.} from \cite{BT2}:

\begin{definition}[Fake stable leaves]\label{fakke}
Let  $\cone_+$ be a cone, and let $C_\FF > 1$.
Let $\FF(\cone_+,C_\FF)$ (also noted simply $\FF(\cone_+)$ or
$\FF$ when the meaning is clear) be the set
of all $C^r$ (embedded) submanifolds $\Gamma\subset \real^d$, of dimension $d_s$,
with $C^r$ norms of submanifold
charts bounded by $C_\FF$, and
so that the straight line connecting any two distinct points in 
$\Gamma$ is normal to a $d_u$-dimensional subspace contained in  $\cone_+$. 
\end{definition}

If $F$  is regular cone hyperbolic from $\cone_{\pm}$ to $\cone_{\pm}'$ (recall Definition~\ref{CSH}) then,
assuming in addition\footnote{This is possible in the application, up
to taking smaller charts.} that the extension of $F$ to $\real^d$ is $C^r$ 
there exists $C_\FF<\infty$ so that this extension 
maps
each element of $\FF(\cone_+)$ to an element of $\FF(\cone_+')$.

We need some notation in view of performing 
dyadic decompositions in  Fourier space. 
We may assume that  $E_-:=\real^{d_s} \times \{0\}$ is included in $\cone_-$, 
and we denote by $\pi=\pi_-$ the orthogonal projection from
$\real^d$ to the quotient $\real^{d_s}$ and by $\pi_\Gamma$ its restriction to $\Gamma$.
Our  assumption  on $\FF$ implies that $\pi_\Gamma:\Gamma \to \real^{d_s}$ is a $C^r$ diffeomorphism
onto its image
with a $C^r$ inverse. Letting $\pi_+$ be the projection from
$\real^d$ to  the quotient $\real^d\setminus E_-=   \real^{d_u}$,
we have that  $\Gamma$ is the graph of the $C^r$ map 
\begin{equation}\label{charte}
\gamma=\pi_+ \circ \pi_\Gamma^{-1}: \real^{d_s} \cap \pi_-(\Gamma) \to \real^{d_u}\, , 
\end{equation}
and the $C^r$ norm of $\gamma$ is bounded by a
universal scalar multiple of $C_\FF$.

\begin{definition}[Isotropic  norm on stable leaves]\label{iso} Fix $\cone_\pm$  
so that  $\real^{d_s} \times \{0\}$ is included in $\cone_-$.
Let $\Gamma \in \FF(\cone_+)$  and 
  let $\varphi$ be continuous and compactly supported.
For $w\in \Gamma\subset \real^d$, we set
\begin{align}\label{myop}
\psi_{\ell_s}^{Op(\Gamma)}(\varphi)(w)&=
\frac{1}{(2\pi)^{d_s}}
\int_{z\in \real^{d_s}}\int_{\eta_s \in \real^{d_s}}
\E^{\I (\pi_\Gamma(w)- z)\eta_s} 
%\\ &\qquad\qquad\qquad \qquad\qquad\qquad\qquad
\psi_{\ell_s}^{(d_s)}(\eta_s)
\varphi(\pi_{\Gamma}^{-1}(z))  \D \eta_s \D z \, ,
\end{align}
where  $\psi_k^{(d_s)}:\real^{d_s}\to [0,1]$ 
is defined as in \eqref{2.49}. 
For every $1 \le q \le \infty$, $1\le p \le \infty$, 
and $-(r-1)<s<r-1$, define an auxiliary isotropic  norm on $C^0(\Gamma)$   as
\begin{equation} \label{ust}
\|\varphi\|^s_{p,q,\Gamma}=
\biggl (\sum_{\ell_s \in \integer_+}
 \bigl (2^{\ell_s s}\|\psi^{Op(\Gamma)}_{\ell_s}(\varphi)\|_{L_p(\mu_\Gamma)}\bigr )^q
 \biggr )^{1/q}\, ,
\end{equation}
where $\mu_\Gamma$ is the Riemann volume on $\Gamma$ induced by the standard metric on $\real^d$.
When $q=\infty$, we sometimes just write 
$$\|\varphi\|^s_{p,\Gamma}=\|\varphi\|^s_{p,\infty,\Gamma}
=\sup_{\ell_s \in \integer_+}
 2^{\ell_s s}\|\psi^{Op(\Gamma)}_{\ell_s}(\varphi)\|_{L_p(\mu_\Gamma)}\, .
$$
\end{definition}

Note that \eqref{ust} is just 
the classical $d_s$-dimensional Besov norm\footnote{See \cite[\S 2.1, Def. 2]{RS}
for a definition of the classical  Besov norm $B^s_{p,q}$.}
$B^s_{p,q}$ of $\varphi|_{\Gamma}$ in the chart given by $\pi_\Gamma^{-1}$:
$$
\|\varphi\|^s_{p,q,\Gamma}=
\|\varphi \circ \pi_\Gamma^{-1}\|_{B^s_{p,q}(\real^{d_s})} \, .
$$
{\it We are  considering
admissible leaves on the manifolds like Liverani et al. \cite{GL1, DL}, so 
for all practical purposes the cones 
live in the
tangent space and not 
in the cotangent space.
To prove Lasota--Yorke estimates, however, it will be crucial 
to also use cones in the cotangent space,
see \eqref{magic3}.} (The reader was already warned in  Footnote~\ref{beware} that the analogy with the norms
 \cite{BT2} is misleading and superficial.)

We next give\footnote{The definition below  can be compared to the norm in \cite{BT2}, but the norms are
 not equivalent.} the definition of the local space:

\begin{definition}[The local space $\UU^{\cone_\pm,t,s}_{p}(K)$]\label{below}
Let $K\subset \real^d$ be a non-empty compact set. 
For a cone pair
$\cone_\pm=(\cone_+,\cone_-)$  so 
that  $\real^{d_s} \times \{0\}$ is included in $\cone_-$, a constant
$C_\FF\ge 1$,
and   real numbers, $1 \le p \le \infty$, $t$,
and   $s$,   define
for a $C^\infty$ function $\varphi$  supported
in $K$,  
\begin{equation}\label{def:normOnRUU}
\|\varphi\|_{\UU^{\cone_\pm,t,s}_{p}}=
\sup_{\Gamma \in \FF(\cone_+,C_\FF)} 
 \sup_{\ell \in \integer_+}
2^{\ell t}   \|  \psi_{\ell}^{Op} (\varphi )\|^s_{p,\Gamma}\, .
\end{equation}
Set $\UU^{\cone_\pm,t,s}_{p}(K)$ to
be the completion of $C^{\infty}(K)$  with respect to  
$\|\cdot \|_{\UU^{\cone_\pm, t,s}_{p}}$. 
\end{definition}

Our first observation is the following lemma:
\begin{lemma}[Comparing $\UU_p^{\cone_\pm,t,s}(K)$ with classical spaces]\label{lm:CsU}
Assume $s<-t<0$.
For any $u> t$, there exists a constant $C=C(u,K)$ such that $\|\varphi\|_{\UU^{\cone_\pm,t,s}_p} \le C \|\varphi\|_{C^u}$ for all $\varphi\in C^\infty(K)$. 
For any $u>|t+s|$, the space $\UU^{\cone_\pm,t,s}_p(K)$ is contained in the space of distributions of order $u$ supported on $K$. 
\end{lemma}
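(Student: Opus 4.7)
\textbf{First assertion ($C^u\hookrightarrow \UU^{\cone_\pm,t,s}_p$ for $u>t$).}
Since $s<0$, the classical inequality $\|f\|_{B^s_{p,\infty}(\real^{d_s})}\le C\|f\|_{L_p(\real^{d_s})}$ transports through the chart $\pi_\Gamma^{-1}$ to
\[
\|\psi_\ell^{Op}(\varphi)\|^s_{p,\Gamma}\le C\,\|\psi_\ell^{Op}(\varphi)\|_{L_p(\mu_\Gamma)}
\]
uniformly in $\Gamma\in\FF(\cone_+,C_\FF)$. I would then invoke the Bernstein-type estimate $\|\psi_\ell^{Op}(\varphi)\|_{L_\infty(\real^d)}\le C\,2^{-\ell u}\|\varphi\|_{C^u}$ (a direct consequence of the Littlewood--Paley characterisation of $C^u$), combined with the rapid off-support decay of the kernel $\FFF^{-1}\psi_\ell$ and the polynomial volume growth $\mu_\Gamma(\Gamma\cap B(0,R))\le C R^{d_s}$ guaranteed by the graph description \eqref{charte} and the uniform $C^r$ bound inherited from $C_\FF$. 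This yields $\|\psi_\ell^{Op}(\varphi)\|_{L_p(\mu_\Gamma)}\le C\,2^{-\ell u}\|\varphi\|_{C^u}$ uniformly in $\Gamma$, and since $u>t$ the factor $2^{\ell(t-u)}$ is bounded over $\ell\in\integer_+$.

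\textbf{Second assertion: reduction to a pairing estimate.}
Since $\UU^{\cone_\pm,t,s}_p(K)$ is the closure of $C^\infty(K)$, it suffices to prove
\[
\Bigl|\int\varphi\phi\,\D x\Bigr|\le C\|\varphi\|_{\UU^{\cone_\pm,t,s}_p}\|\phi\|_{C^u}
\]
for every $\varphi\in C^\infty(K)$ and $\phi\in C^u$ supported in $K$. Under the natural assumption that $\{0\}\times\real^{d_u}\subset\cone_+$ (so that horizontal planes $\Gamma_y=\real^{d_s}\times\{y\}$ belong to $\FF(\cone_+,C_\FF)$), these planes foliate $\real^d$ and Fubini applies. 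Writing $\sum_\ell \psi_\ell\equiv 1$, picking a slightly fattened block $\tilde\psi_\ell$ with $\psi_\ell\tilde\psi_\ell=\psi_\ell$, and using $L_2$-self-adjointness of the real-symbol multipliers $\psi_\ell^{Op}$, I would write
\[
\int\varphi\phi\,\D x=\sum_\ell\int_{\real^{d_u}}\!\D y\int_{\Gamma_y}\psi_\ell^{Op}(\varphi)\cdot\tilde\psi_\ell^{Op}(\phi)\,\D\mu_{\Gamma_y}.
\]
The classical $B^s_{p,\infty}$--$B^{-s}_{p',1}$ duality on each leaf then bounds the inner integral by $\|\psi_\ell^{Op}(\varphi)\|^s_{p,\Gamma_y}\cdot\|\tilde\psi_\ell^{Op}(\phi)\|^{-s}_{p',1,\Gamma_y}$, with the first factor at most $2^{-\ell t}\|\varphi\|_{\UU^{\cone_\pm,t,s}_p}$ by the very definition of the target norm.

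\textbf{The heart of the argument: the dyadic bound for $\tilde\psi_\ell^{Op}(\phi)$.}
The remaining task is to prove $\|\tilde\psi_\ell^{Op}(\phi)\|^{-s}_{p',1,\Gamma_y}\le C\,2^{-\ell(u+s)}\|\phi\|_{C^u}$. Since $\tilde\psi_\ell^{Op}(\phi)$ has $\real^d$-Fourier support of radius $\lesssim 2^\ell$, its restriction to the horizontal leaf $\Gamma_y$ has $d_s$-dimensional Fourier support of radius $\lesssim 2^\ell$ as well, so $\psi_{\ell_s}^{Op(\Gamma_y)}(\tilde\psi_\ell^{Op}(\phi)|_{\Gamma_y})=0$ for $\ell_s\ge\ell+C$. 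Combining Bernstein $\|\tilde\psi_\ell^{Op}(\phi)\|_{L_\infty}\le C\,2^{-\ell u}\|\phi\|_{C^u}$, rapid off-support decay of the kernel, and uniform boundedness of the multipliers $\psi_{\ell_s}^{Op(\Gamma_y)}$ on $L_{p'}$, each non-zero dyadic piece satisfies $\|\psi_{\ell_s}^{Op(\Gamma_y)}(\tilde\psi_\ell^{Op}(\phi)|_{\Gamma_y})\|_{L_{p'}}\le C\,2^{-\ell u}\|\phi\|_{C^u}$, and summing the geometric series $\sum_{\ell_s\le\ell+C}2^{-\ell_s s}\sim 2^{-\ell s}$ (possible because $-s>0$) produces the advertised bound. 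Integrating over $y$ (confined to the projection of $\supp\phi$) and summing over $\ell$ yields the geometric series $\sum_\ell 2^{-\ell(t+u+s)}$, which converges precisely when $u>-(t+s)=|t+s|$. The main subtlety is this dyadic matching on the leaf: one must exploit that the $\real^d$-frequency localisation of $\tilde\psi_\ell^{Op}(\phi)$ transfers losslessly to the restriction $\tilde\psi_\ell^{Op}(\phi)|_{\Gamma_y}$, extracting the full $2^{-\ell u}$ Bernstein gain before paying back only $2^{-\ell s}$ to promote the $L_{p'}$ bound to a $B^{-s}_{p',1}$ bound.
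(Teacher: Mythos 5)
Your proposal is correct and rests on the same core ingredients as the paper's proof in Appendix~C: the Littlewood--Paley/Zygmund characterisation $\|\phi\|_{C^u}\simeq\sup_n 2^{un}\|\psi_n^{Op}\phi\|_{L_\infty}$, the reduction to horizontal leaves $\Gamma_y=\real^{d_s}\times\{y\}$ (which the paper also assumes lie in $\FF(\cone_+)$ after a change of coordinates), and the crucial observation that the $\real^{d_s}$-Fourier support of $\psi_\ell^{Op}(\varphi)|_{\Gamma_y}$ is contained in the ball of radius $\lesssim 2^\ell$, so that only leaf-frequencies $\ell_s\lesssim\ell$ contribute. The paper's own proof of the second assertion packages this through the explicit kernel-decay Lemma~\ref{lm:decay} together with some weighted integral bookkeeping, and (as written) contains what appears to be a typo in the range of the leaf-frequency sum (it should be $n_s\le n+2$, not $n-2\le n_s\le n+2$, precisely because $s<0$ makes the geometric series over all $n_s\le n+2$ converge to $\sim 2^{-ns}$). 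Your route via the $B^s_{p,\infty}$--$B^{-s}_{p',1}$ duality pairing on each leaf is cleaner and more transparent about where the condition $u>|t+s|$ enters (the convergence of $\sum_\ell 2^{-\ell(t+u+s)}$). What each approach buys: the paper's explicit $b_m$-kernel control is reusable machinery already needed elsewhere (integration by parts in Lemma~\ref{LLYU}); the duality framing makes the logic visible at a glance.

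Two points deserve to be filled in. First, the off-support estimates you invoke informally ("rapid off-support decay of the kernel", "polynomial volume growth") need a quantitative form: to integrate over $y\in\real^{d_u}$ you must show that $\|\tilde\psi_\ell^{Op}\phi\|^{-s}_{p',1,\Gamma_y}$ is not merely uniformly bounded but integrable in $y$, with a bound independent of $\ell$; this is exactly the content and purpose of Lemma~\ref{lm:decay}, which you should cite rather than re-derive. Second, you should be explicit that after truncating the leaf-frequency sum at $\ell_s\le\ell+C$ and bounding each block by the $L_{p'}(\Gamma_y)$ norm, the estimate $\|\tilde\psi_\ell^{Op}\phi|_{\Gamma_y}\|_{L_{p'}(\real^{d_s})}\lesssim 2^{-\ell u}\|\phi\|_{C^u}$ uses both the $L_\infty$ Bernstein bound and the compact-plus-rapidly-decaying spatial localisation; these are separate inputs. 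With those details spelled out, the argument is sound and notationally tidier than the original.
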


The proof of Lemma~\ref{lm:CsU} is given in Appendix~\ref{partsparts}.
Lemma~\ref{lm:CsU} implies the following statement
 (as in the proof of \cite[Lemma~4.21]{BT2}, see also \cite[Chapter~5]{Ba}):

\begin{lemma}[Approximation by finite rank operators]\label{finiterankU}
Let $K\subset \real^d$ be compact, let $s\le -t\le 0$, and let
$\cone_\pm$ and $\cone'_\pm$ be arbitrary cone pairs.
For each $v >0$ and every  $\phi \in C^\infty(K)$, there exist a constant
$C_v$ and, for all integers $n_1\ge n_0\ge 1$,  an operator
 $\TT_{n_1}:
\UU^{\cone_\pm,t,s}_p(K)\to \UU^{\cone_\pm',t,s}_p(K)$ of rank at most  $2^{d(n_1+5)}$, so that
the operator $\RR_{n_0}$ defined by \eqref{alzh}
satisfies 
$$\|(\RR_{n_0} -  \TT_{n_1})\varphi\|_{\UU^{\cone_\pm',t,s}_p(K)}\le C_v 2^{-dv n_1 }
\|\varphi\|_{\UU^{\cone_\pm,t,s}_p(K)} \, .
$$
\end{lemma}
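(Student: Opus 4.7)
The approach mirrors that of \cite[Lemma~4.21]{BT2}. The operator $\RR_{n_0}$ defined in \eqref{alzh} is a smoothing/localizing operator whose output is essentially band-limited at frequency $\sim 2^{n_0}$ and compactly supported near $K$; in particular $\RR_{n_0}\varphi \in C^\infty(\real^d)$ whenever $\varphi \in \UU^{\cone_\pm,t,s}_p(K)$. The first step is to quantify this gain: the Schwartz-class kernel of $\RR_{n_0}$ has Fourier localization in a ball of radius $O(2^{n_0})$, so for every $u \ge 0$ one has
\begin{equation*}
\|\RR_{n_0}\varphi\|_{C^u(\real^d)} \le C_{n_0,u,K} \, \|\varphi\|_{\UU^{\cone_\pm,t,s}_p(K)}\, ,
\end{equation*}
where the constant depends on $n_0$ but not on $\varphi$. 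The key ingredient here is Lemma \ref{lm:CsU}, which identifies $\UU^{\cone_\pm,t,s}_p(K)$ with a space of distributions of finite order on $K$, so that pairing against the (arbitrarily smooth) kernel of $\RR_{n_0}$ produces a controlled $C^u$-bound.

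Next I would construct $\TT_{n_1}$ by a Jackson-type discretization at scale $2^{-n_1}$: cover a fixed slight enlargement of $K$ by a grid of cubes $\{Q_j\}_{j \in J_{n_1}}$ of side $2^{-n_1}$, with cardinality bounded by $2^{d(n_1+5)}$; fix a smooth partition of unity $\{\chi_j\}$ subordinate to this cover and a center $x_j \in Q_j$; and set
\begin{equation*}
\TT_{n_1}\varphi = \sum_{j \in J_{n_1}} \bigl(\RR_{n_0}\varphi\bigr)(x_j) \, \chi_j\, .
\end{equation*}
(A higher-order local polynomial or $B$-spline interpolant could be used to sharpen the Jackson error, if needed.) The rank of $\TT_{n_1}$ is at most $\# J_{n_1} \le 2^{d(n_1+5)}$. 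By Taylor's formula applied to the smooth function $\RR_{n_0}\varphi$ on each cube $Q_j$, for every $u' \ge 0$,
\begin{equation*}
\|\RR_{n_0}\varphi - \TT_{n_1}\varphi\|_{C^0(\real^d)} \le C \, 2^{-n_1 u'} \, \|\RR_{n_0}\varphi\|_{C^{u'}(\real^d)}\, ,
\end{equation*}
and an analogous bound controls moderate $C^{u''}$-norms of the difference.

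Finally, I apply Lemma \ref{lm:CsU} in its direct form (bounding $\|\cdot\|_{\UU^{\cone'_\pm,t,s}_p}$ by $\|\cdot\|_{C^{u''}}$ for any $u'' > t$) to convert the $C^{u''}$-control of $\RR_{n_0}\varphi - \TT_{n_1}\varphi$ into $\UU^{\cone'_\pm,t,s}_p$-control. Combining with the first-step regularity bound and choosing $u'$ large in terms of the prescribed $v>0$,
\begin{equation*}
\|(\RR_{n_0}-\TT_{n_1})\varphi\|_{\UU^{\cone'_\pm,t,s}_p(K)} \le C_v \, 2^{-dv n_1}\, \|\varphi\|_{\UU^{\cone_\pm,t,s}_p(K)}\, ,
\end{equation*}
as claimed. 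Note that the cone pair $\cone'_\pm$ enters only through the final application of Lemma \ref{lm:CsU}, which is insensitive to which cone pair is chosen; this is why the lemma permits $\cone'_\pm \ne \cone_\pm$ at no cost.

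\textbf{Main obstacle.} The only real work is the first step: quantitatively showing that $\RR_{n_0}$ regularizes $\UU^{\cone_\pm,t,s}_p(K)$ into $C^\infty$ with constants that depend only on $n_0$ (and mild parameters). This relies on two facts: that the kernel of $\RR_{n_0}$ is genuinely Schwartz-class (which must be read off from the definition \eqref{alzh} -- presumably a truncated Littlewood--Paley sum or a mollification at scale $2^{-n_0}$), and that the $\UU^{\cone_\pm,t,s}_p$-norm dominates a sufficiently negative Sobolev norm to be paired against smooth test functions, which is exactly the second half of Lemma \ref{lm:CsU}. Once this regularization estimate is in hand, the remainder of the argument is the standard Jackson-type approximation scheme and is insensitive to the anisotropic fine structure of the norm.
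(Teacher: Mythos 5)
Your plan — regularize via $\RR_{n_0}$ using the second half of Lemma~\ref{lm:CsU}, discretize on a dyadic grid at scale $2^{-n_1}$, apply a Jackson-type interpolation estimate, then convert the smooth remainder back into the $\UU$-norm via the first half of Lemma~\ref{lm:CsU} — is the same strategy as in the cited \cite[Lemma~4.21]{BT2} proof, which the paper invokes without repeating. The two uses of Lemma~\ref{lm:CsU} are exactly what makes the cone pairs $\cone_\pm$ and $\cone'_\pm$ irrelevant, as you correctly observe.

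Two points are glossed over, one minor and one more substantial. The minor one: as stated, $\TT_{n_1}\varphi=\sum_j(\RR_{n_0}\varphi)(x_j)\chi_j$ is supported in a neighbourhood of $K$, not in $K$; one should insert a further multiplication by a cutoff (or absorb $\phi$ into the construction) so the range lands in $\UU^{\cone'_\pm,t,s}_p(K)$, and similarly the cube count should be checked against the precise constant $2^{d(n_1+5)}$ (which presumably carries a normalization of $K$ from \cite{BT2}). The more substantial one is the uniformity in $n_0$. Your step $\|\RR_{n_0}\varphi\|_{C^M}\le C_{n_0,M,K}\|\varphi\|_{\UU^{\cone_\pm,t,s}_p}$ has a constant growing like a power of $2^{n_0}$ (the Schwartz kernel $\FFF^{-1}\Psi_{n_0}$ concentrates as $n_0\to\infty$). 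The Jackson error you quote, $2^{-n_1 u'}\|\RR_{n_0}\varphi\|_{C^{u'}}$, is only $2^{-u'(n_1-n_0)}$ up to a fixed power of $2^{n_0}$, so for $n_1$ comparable to $n_0$ the stated bound $C_v 2^{-dvn_1}$ with $C_v$ independent of $n_0,n_1$ does not follow from the argument as written. This is not cosmetic: the lemma quantifies over all $n_1\ge n_0\ge 1$ with a single constant. To recover it one must exploit more than ``$\RR_{n_0}\varphi$ is smooth'': e.g.\ note that $\RR_{n_0}\varphi=\phi\cdot g$ with $g$ band-limited to $\|\xi\|\lesssim 2^{n_0}$, expand $\phi_1 g$ in a Fourier series on a fixed torus and truncate at $|k|\le 2^{n_1+O(1)}$, so that the retained modes cover the full band of $g$ and the error is driven by the super-polynomial decay of $\hat\phi_1$ beyond $2^{n_1+O(1)}$, not by a Taylor remainder. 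With piecewise-constant (or even fixed-degree polynomial) interpolation the constant will not be uniform in $n_0$. Everything else in the proposal is sound.
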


\medskip

We now define the global space $\UU^{t,s}_{p}$:

\begin{definition}[Anisotropic spaces  $\UU^{t,s}_{p}$ on $M$]\label{defnormU}
Fix  $C^\infty$ charts $\kappa_\omega:V_\omega\to \real^d$
and a partition of unity $\theta_\omega$ as in
 Definitions~\ref{ChP} and~\ref{CSH}.
Fix  real numbers $s$ and $t$.
The Banach space $\UU^{t,s}_p$  is 
the completion of $C^{\infty}(M)$ for the norm
\[
\|\varphi\|_{\UU^{t,s}_{p}(T)}:=\max_{\omega \in \Omega} 
\|(\theta_\omega\cdot \varphi)\circ \kappa_\omega\|_{\UU^{\cone_{\omega,\pm}, t,s}_{p}}\, .
\]  
\end{definition}
 
In Appendix~\ref{willbeneeded},
we discuss why the anisotropic spaces $\UU^{t,s}_1$ are analogues of the (Blank--Keller--)Gou\"ezel--Liverani \cite{BKL,GL1, GL2}
spaces $\BB^{t,|s+t|}$ for integer $t$.
Since
not only $s$, but also $t$, can be taken arbitrarily close to zero,
the spaces $\UU^{t,s}_p$ are also somewhat similar to  the Demers--Liverani spaces of \cite{DL} when $p>1$
and $-1+1/p<s<-t<0<t<1/p$. (But see Remark~\ref{DLZ}.)

%%%%%%%%%%%%%%%%%%%%%%%%%%%%%%%%%%%%%%%%%%%%%

\subsection{Comments on the definition of $\UU^{t,s}_p$}

\begin{remark}[Choice of the parameter $q$]\label{qqq}
For  any $\epsilon >0$, any $s$, $p$, and any $q'\le q\le \infty$,
the  Besov spaces on $\real^d$ satisfy the bounded inclusions
$B^{s+\epsilon}_{p,\infty}\subset  B^s_{p,q'}\subset B^{s}_{p,q}$, see \cite[\S 2.2.1]{RS}.
Denoting the Triebel-Lizorkin scale by $F^s_{p,q'}$,
it is also well known 
\cite[\S 2.2.2]{RS}  that 
\begin{align}\label{cumbersome}
\|\varphi\|_{B^s_{p,q}}\le C \|\varphi\|_{F^s_{p,q'}}
\mbox{ if }\max(p,q')\le q\, , \qquad
\|\varphi\|_{F^s_{p,q}}\le C \|\varphi\|_{B^s_{p,q}}
\mbox{ if } q \le \min(p,q')\, .
\end{align}
 In particular, 
\begin{equation}\label{forlater}
\|\varphi\|_{B^s_{p,\infty}}\le C \|\varphi\|_{F^s_{p,2}} \, , \forall p\, ,
\end{equation}
where \cite[\S 2.1.2]{RS} $F^s_{p,2}(\real^{d_s})=H^s_p(\real^{d_s})$.
The case $q\ne \infty$ can be handled by slightly changing the
value of $s$. In particular, if $s<0$,
$$
\|\varphi\|_{B^s_{p,q}}\le C \|\varphi\|_{F^0_{p,2}}=C \|\varphi\|_{L_p}\, , \forall p, q\, .
$$

Instead of taking $q=\infty$ 
in the norm $\|\cdot \|^s_{p,q,\Gamma}$, one could 
consider two parameters $1<q<\infty$ and $1<q'<\infty$:
\begin{equation*}
\biggl ( \sum_{\ell \in \integer_+}
\bigl ( 2^{\ell t}   \|  \psi_{\ell}^{Op} (\varphi )\|^s_{p,q,\Gamma})^{q'}\biggr )^{1/q'}\, ,
\end{equation*}
but in view of the first paragraph of this remark,
we expect that  this would just make the computations more painful
without any benefit. 
Also, since
it is convenient to take the supremum over $\Gamma$ at the
very end of Definition~\ref{below}, the choice $q=\infty$ is most compatible
with a Besov norm. (See however Appendix~\ref{willbeneeded}.) 
\end{remark}

\begin{remark}[Comparison with mixed (Lebesgue) anisotropic Besov norms]
\label{mixx}
Setting for fixed $x_+ \in \real^{d_u}$
$$
(\FFF_{-}(\varphi))_{x_+}(\xi_-)=
\int_{\real^{d_s}} \E^{-\I x_-\xi_-} \varphi(x_-,x_+)\D x_- \, , \quad \xi_- \in \real ^{d_s}\, , \\
$$
and 
$$
\FFF^{-1}_{-}(\psi_{x_+})(x_-,x_+)= \frac{1}{(2\pi)^{d_s}}
\int_{\real^{d_s}} \E ^{\I x_-\xi_-}  \psi_{x_+}( \xi_-)\D \xi_- \, , \quad x_-\in \real ^{d_s} \, ,
$$
it is easy to see that for any fixed $x_+\in \real^{d_u}$ and $\Gamma=\real^{d_s}\times\{ x_+\}$,
\begin{align}
\label{magic} &\psi^{Op(\Gamma)}_{\ell_s}\psi^{Op}_{\ell} \varphi (x_-,x_+)=\FFF^{-1}_{-} \bigl [ \psi_{\ell_s}(\xi_-)
\bigl ( \FFF_{-} \circ \FFF^{-1} (\psi_\ell (\xi) (\FFF \varphi))\bigr )\bigr  ](x_-,x_+) \, .
\end{align} 
Considering the set $\Sigma$ of horizontal leaves $ \real^{d_s} \times \{x_+\}$, 
the formula \eqref{magic} implies
\begin{equation}\label{magic2}
\sup_\ell \sup_{\ell_s}2^{\ell t} 2^{\ell_s s} \sup_{\Gamma \in \Sigma} 
\| \psi^{Op(\Gamma)}_{\ell_s} \psi^{Op}_\ell \varphi\|_{L_p(\Gamma)}=
\sup_\ell 2^{\ell t}\sup_{\Gamma \in \Sigma}  \| \psi^{Op}_\ell \varphi\|_{B^s_{p,\infty}(\Gamma)}\, .
\end{equation}
The left-hand side above is an anisotropic mixed Besov norm 
$B^{s,t}_{(\infty,p),(\infty,\infty)}$ where 
the norm $L_p(\real^d)$ is replaced by $\sup_{x_+ \in \real^{d_u}} L_p( \real^{d_s} \times \{x_+\})$.
Such mixed (Lebesgue) norm spaces have been studied \cite{BIN,JMHS},
and they satisfy the expected compact embedding
and interpolation properties.
The right-hand side in \eqref{magic2} is similar  to $\UU^{\cone_\pm,t,s}_p$, except that we restrict to
$\Sigma$ instead of considering all $\Gamma \in \FF(\cone_+)$.
Now, for each  $\Gamma\in \FF$, we can construct a $C^r$ foliation of manifolds parallel
to $\Gamma$ (obtained by trivial translations) by recalling
\eqref{charte} and setting
\begin{equation}\label{nicechart}
\Phi_\Gamma(x_-, x_+)=( x_-,\gamma(x_-)+x_+)\, ,
\end{equation}
noting that $\Phi_\Gamma$ maps the horizontal hyperplane through the
origin $\real^{d_s} \times \{0\} $ to $\Gamma$,
and $\Phi_\Gamma$ maps each horizontal $\real^{d_s} \times \{x_+\}$ to a parallel leaf $\Gamma_{x_+}$.
Note also that the jacobian of the holonomy $x_+\mapsto \gamma(x_-)+x_+$ is constant
equal to $1$.
Each leaf  $\Gamma_{x_+}$ also belongs
to $\FF(\cone_+)$, up to taking smaller chart neighbourhoods.
Using  $\Phi_\Gamma$ as a straightening chart for the parallel foliation, and noting that
$\gamma_\Gamma$ satisfies
uniform bounds by definition of $\FF$, we have argued that the norms 
\begin{equation}\label{nofreedom}
\sup_{\Gamma \in \FF}
\|\varphi \circ \Phi_\Gamma\|_{B^{t,s}_{(\infty,p),(\infty,\infty)}}
\end{equation} 
and  $\|\varphi\|_{\UU^{t,s}_p}$ are similar.
Beware however that when proving the Lasota--Yorke bound we should use
 $\UU^{t,s}_p$, 
and not the equivalent norm $\sup_\Gamma \|\varphi \circ \Phi_\Gamma \|_{B^{t,s}_{(\infty,p),(\infty,\infty)}}$. In other words, working with $\UU^{t,s}_p$ is the key to bypassing
invariance of charts under the dynamics (this invariance caused
difficulties in \cite{BG2, BLiv}). However, the theory of mixed anisotropic Besov norms  can
perhaps be used to obtain other properties (see e.g. Remark~\ref{pw}).
\end{remark}

\begin{remark}[Piecewise smooth systems]\label{pw}
In the application to transfer operators of $C^r$ Anosov diffeomorphisms, we  take $-(r-1)<s<-t<0$.
In view of considering piecewise smooth hyperbolic maps, 
we conjecture that multiplication by 
the characteristic function of a domain $E$ with piecewise smooth
boundary (satisfying \cite{BG1,BG2} a suitable transversality 
condition with respect  to the  cone $\cone_-$)
is a bounded multiplier on $\UU^{t,s}_p$ if 
$$-1+1/p<s<-t<0<t<1/p \, . $$
We sketch a possible argument  involving interpolation (another strategy would be to use
paraproducts as in \cite[\S4.6.3]{RS}).  

Recall (see e.g. \cite[Thm 4.6.3/1]{RS})
that for any  $1\le q\le \infty$, multiplication
by the characteristic function of a half-plane in $\real^n$ is a bounded multiplier
on the Besov space $B^s_{p,q}(\real^n)$ if 
$
\frac{1}{p}-1 < s < \frac{1}{p}
$.
For $t=0$ and $-1+1/p<s<0$, we may
apply this bounded multiplier property on each $B^s_{p,\infty}(\real^{d_s})$.
(Assuming that the number of connected components
of $E\cap \Gamma$ is uniformly bounded: this is the transversality condition.)

For $s=0$ and $0<t<1/p$, take a sequence of leaves $\Gamma_n$ tending to the supremum
realising the norm  \eqref{def:normOnRUU} of $\chi_E \varphi$.
For each leaf $\Gamma_n$, we can construct a $C^r$ foliation of leaves
in $\FF(\cone_+)$ parallel
to $\Gamma_n$ (obtained by trivial translations), see \eqref{nicechart}.
Then,   the supremum over the leaves of this foliation
of the supremum over $\ell$ in \eqref{def:normOnRUU}
is similar in spirit
to a mixed Besov \cite{BIN} norm ,
 where $\sup_{x_+} \|\varphi(\cdot, x_+)\|_{B^0_{p,\infty}(\real^{d_s})}$
replaces  $\|\varphi\|_{L_p(\real^d)}$ in $B^t_{p,\infty}(\real^d)$. So we can hope that the bounded
multiplier property extends to the case $s=0$.

In view of the known interpolation results
 \cite[\S 30]{BIN},
we can hope  that 
interpolating between the cases $t=0$ and $s=0$ would give the desired bound for each fixed $\Gamma_n$
(as in \cite[Lemma 23]{BG1}).

As a final comment, note that in \cite{DL}, \cite{DZ}, or \cite{BDL}, the fact that the systems
are only piecewise smooth is not\footnote{Lemma 3.7 of \cite{DZ}  shows that
such characteristic functions {\it belong} to the space, which is in general a weaker
statement.} handled  by showing that multiplication by
characteristic functions of suitable domains $E$ is a bounded operator on the space.
Instead, the authors use  a $t$-H\"older quotient in the transversal (i.e. unstable) 
direction, where
the leaves $\Gamma$ must be ``comparable,'' i.e., both lie in a single domain $E$
where smoothness  (including bounded distortion) holds.
\end{remark}

%%%%%%%%%%%%%%%%%%%%

\section{Bounding the essential spectral radius of $\LL_g$ on $\UU^{t,s}_1$}
\label{labound}

In this section, we prove the following result:

\begin{theorem}[Essential spectral radius of $\LL_g$ on $\UU^{t,s}_1$]\label{refereewish}
If $d_s=1$ then the essential spectral radius of
the transfer operator $\LL_g(\varphi)=(g \cdot \varphi )\circ T^{-1}$  enjoys  the same
upper bound  when acting on  $\UU^{t,s}_1$ as on the space
$\CC^{t,v}$ from \cite{BT2} with $v=t+s$, that is:
\begin{align}
\nonumber &\exp \sup_{\mu \in \Erg(T)}
\Bigl \{h_\mu(T) + \int \log | g \det (DT|_{E^s}) | \, d\mu  \\
\label{defQts}&\qquad\qquad \qquad\qquad\qquad\qquad\quad+
\max\bigl \{t \chi_\mu(DT^{-1}|_{E^u}), |t+s| \chi_\mu(DT|_{E^s} )\bigr \}
\Bigr \}\, .
\end{align}
\end{theorem}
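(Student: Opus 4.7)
The plan is to combine a Lasota--Yorke inequality for the iterates $\LL_g^n$ on $\UU^{t,s}_1$ with the compact approximation provided by Lemma~\ref{finiterankU}, and conclude by Hennion's theorem. Since the statement cites Lemma~\ref{LLYU} as the key input, I would first reduce Theorem~\ref{refereewish} to producing, for every $n$, a splitting $\LL_g^n = \KK_n + \NN_n$ such that $\|\NN_n\|_{\UU^{t,s}_1}$ is bounded by a quantity whose $n$-th root limit equals \eqref{defQts}, and $\KK_n$ is compact on $\UU^{t,s}_1$. By Nussbaum's formula, this yields the desired upper bound $\rho_{\mathrm{ess}}(\LL_g)\le $ \eqref{defQts}.

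\textbf{Local reduction and fragmentation.} Using the admissible charts and partition of unity from Definition~\ref{ChP} and Definition~\ref{defnormU}, the norm of $\LL_g^n\varphi$ is controlled chart-by-chart. In each target chart $\omega'$, writing $\LL_g^n \varphi$ as a sum over inverse branches of $T^n$ amounts (via Definition~\ref{CSH}) to a sum of regular cone-hyperbolic pull-backs $F_j = \kappa_\omega^{-1}\circ T^{-n}\circ \kappa_{\omega'}$ composed with multiplication by $g_n = \prod_{k=0}^{n-1}g\circ T^k$. Following the fragmentation--reconstruction strategy of \cite{BT2} (see also \cite{Ba}), I would introduce a fine partition of unity at a scale $\delta_n$ decreasing exponentially with $n$, refined along orbits so that each fragment is sent by $T^{-n}$ onto a set on which $DT^{-n}$ is essentially linear and the local expansion along $E^u$ and contraction along $E^s$ are pinned down by the Lyapunov exponents of the fragment. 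Standard thermodynamic bookkeeping identifies the exponential growth rate of the number of fragments with $h_\mu(T)$ via the variational principle, the $g_n \cdot |\det DT^{-n}|_{E^u}|$ factor with the integral $\int \log|g\det (DT|_{E^s})|\, d\mu$ after moving the stable Jacobian across by the change of variables $w\mapsto \pi_\Gamma^{-1}(w)$ on admissible leaves, and the remaining $\max\{\lambda_u^{-nt}, \lambda_s^{-n|t+s|}\}$ factor with the Lyapunov term $\max\{t\chi_\mu(DT^{-1}|_{E^u}),|t+s|\chi_\mu(DT|_{E^s})\}$.

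\textbf{Applying the key Lasota--Yorke lemma.} On each fragment, the $\UU^{\cone_\pm,t,s}_1$ norm of the contribution is estimated by Lemma~\ref{LLYU}. The fragment is compactly supported, and the stable cone image $DF_j^{tr}(\cone_{\omega,+})$ falls strictly inside $\cone_{\omega',+}$, so $F_j$ maps $\FF(\cone_{\omega',+})$ into $\FF(\cone_{\omega,+})$ (with a controlled $C_\FF$). Since $d_s=1$, the admissible leaves $\Gamma$ are one-dimensional curves; pulling back by $F_j$ replaces $\Gamma$ with $F_j(\Gamma)\in \FF(\cone_{\omega,+})$, and the action on $\psi^{Op(\Gamma)}_{\ell_s}$ is controlled by the local contraction of $T^{-n}$ along the admissible curve (giving $\lambda_s^{-n|t+s|}$ after accounting for the $s$-index of the Besov norm). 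The global-frequency cutoff $\psi_\ell^{Op}$ interacts with $F_j$ via the regular cone-hyperbolicity: non-stationary phase and the coupling identity \eqref{magic3} between $\real^d$-wave packets and $\Gamma$-wave packets channel the frequency into $\cone_{\omega,+}$, producing the $\lambda_u^{-nt}$ factor. What falls into $\cone_{\omega,-}$ is absorbed in a remainder controlled by a norm $\UU^{\cone_\pm,t',s'}_1$ with strictly better exponents $t'>t$, $s'>s$, which is compact in $\UU^{t,s}_1$ by Lemma~\ref{finiterankU}.

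\textbf{Reconstruction and Hennion.} Summing the fragment estimates, the main terms reconstruct to a global bound of the form $\|\NN_n \varphi\|_{\UU^{t,s}_1}\le Q_n\cdot \|\varphi\|_{\UU^{t,s}_1}$, where $\lim_n Q_n^{1/n}$ equals the expression \eqref{defQts} by Kingman's subadditive ergodic theorem applied to the thermodynamic sum $\sum_j |g_n|_\infty^{(j)}|\det DT^{-n}|_{E^u}|^{(j)} \max\{\lambda_u^{-nt}, \lambda_s^{-n|t+s|}\}^{(j)}$. The compact part $\KK_n$ is handled by replacing the remainder norms with the finite-rank operators $\TT_{n_1}$ from Lemma~\ref{finiterankU}. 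Hennion's theorem then delivers the essential spectral radius bound.

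\textbf{Main obstacle.} The delicate point is verifying that the Besov-type norm $\|\cdot\|^s_{1,\Gamma}$ along $\Gamma$ transforms correctly under $F_j$: the leaf $F_j(\Gamma)$ is only $C^r$, not straight, so one must show that the dyadic Littlewood--Paley decomposition \eqref{myop} is compatible with the $C^r$-diffeomorphism $\pi_{F_j(\Gamma)}$ and the induced change of variables between Besov norms, with constants uniform over admissible graphs (this is where the $d_s=1$ hypothesis simplifies life, as one-dimensional Besov spaces under $C^r$ diffeomorphisms behave cleanly). The second subtlety is the coupling \eqref{magic3} between the global $\psi_\ell^{Op}$ and the leafwise $\psi_{\ell_s}^{Op(\Gamma)}$ under the cone-hyperbolicity of $F_j$; this is what enforces the $\max\{\lambda_u^{-nt},\lambda_s^{-n|t+s|}\}$ splitting and must be done with enough care to avoid losing fractional derivatives.
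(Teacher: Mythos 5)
Your overall strategy (Lasota--Yorke via Lemma~\ref{LLYU} plus compact remainder via Lemma~\ref{finiterankU}, then fragmentation, thermodynamic reconstruction, and Hennion--Nussbaum) is the same as the paper's. However, there is a genuine gap: you have misidentified where the hypothesis $d_s=1$ enters, and without the correct mechanism the $|t+s|$ exponent in \eqref{defQts} does not emerge.

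You attribute the role of $d_s=1$ to the assertion that ``one-dimensional Besov spaces under $C^r$ diffeomorphisms behave cleanly.'' This is not where the hypothesis is used; the Besov-on-leaves machinery and the compatibility of $\psi_{\ell_s}^{Op(\Gamma)}$ with $C^r$ diffeomorphisms is handled uniformly in $d_s$ by Sublemma~\ref{lesublemma}. The actual crux is the coefficient $\nu_b$ in Lemma~\ref{LLYU}: its second summand carries the product $\|F\|_-^{s}\|F\|_{--}^{t}$, where $\|F\|_-$ and $\|F\|_{--}$ are respectively the infimum and supremum of the norm of $DF_x^{tr}$ over the cotangent directions outside $\cone_+$, i.e.\ over a $d_s$-dimensional sector. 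When $d_s>1$ these two numbers need not be close, and $\|F\|_-^s\|F\|_{--}^t$ can be strictly larger than $\|F\|_-^{s+t}$; one then has no route to the combined exponent $|t+s|$ on $\chi_\mu(DT|_{E^s})$. Precisely because $d_s=1$ makes the relevant cotangent sector essentially one-dimensional, by shrinking the chart $K$ (via $m$-dependent refined partitions of unity) one can force $\|F\|_-/\|F\|_{--}$ arbitrarily close to $1$, improving the bound to $\nu_b \le C\bigl(\sup_\Gamma\|f\circ F^{-1}\|_{C^{r-1}(F(\Gamma))}\|F\|_+^t + \sup|f|\,\|F\|_-^{s+t}\bigr)/\inf|\det(DF|_{(\cone_+')^\perp})|^{1/p}$, which is what yields the $|t+s|$ factor after iterating. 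You also omit the second ingredient needed to make the thermodynamic limit come out right: for $F_m=T^{-m}$ along an admissible stable leaf $F_m(\Gamma)$, all derivatives of $F_m^{-1}=T^m$ are bounded by $C\lambda_s^m$, which gives \eqref{nowc} and lets one replace $\|f\circ F^{-1}\|_{C^{r-1}(F(\Gamma))}$ by $C\sup|f|$ at no exponential cost. This is how the weight $g$ appears only through $\int\log|g\det(DT|_{E^s})|\,d\mu$ in the final bound. For the case $d_s>1$, the paper's Remark~\ref{codim} shows that a genuinely different construction (a family of $d_s+1$ cones to separate the $d_s$ distinct stable Lyapunov rates) would be needed, which underscores that the one-dimensionality is not a cosmetic simplification.
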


The bound \eqref{defQts} is the best known \cite{BT2, Ba, Ki} estimate on the essential spectral
radius in the hyperbolic case. The new norm $\UU^{t,s}_1$ is thus at least as good as the norm from \cite{BT2}
if $d_s=1$.
We believe that Theorem~\ref{refereewish} also holds if $d_s >1$:  Remark~\ref{codim} in \S\ref{PPPP}
contains the ideas needed for a proof. We refrain from spelling this proof out in full detail, in order to keep
the length of this note within reasonable bounds.

\subsection{The local Lasota--Yorke Lemma~\ref{LLYU}}
The key ingredient for the proof of Theorem~\ref{refereewish} is 
a Lasota--Yorke lemma. We need some notation. 
Let $F$ be  a $C^r$ diffeomorphism defined on
an open subset of $\real^d$ containing a compact set $K$. Assume that
$F$ is  regular cone hyperbolic from a cone pair $\cone$ to a cone pair $\cone'$.
We use the  notation
\begin{align}\label{Fp}
\|F\|_{+}
&=\sup_{x\in K}\sup_{\stackrel{\xi \ne 0}{ DF_x^{tr}(\xi)\notin \cone'_{-}}}
 \frac{\|DF_x^{tr}(\xi)\|}{\|\xi\|}\, , 
\, \\
\label{Fm}
\|F\|_{-}
&=\inf_{x\in K}
\inf_{\stackrel{\xi\ne 0 }{ \xi \notin \cone_{+}}} \frac{\|DF_x^{tr}(\xi)\|}{\|\xi\|}\, ,
\|F\|_{--}
=\sup_{x\in K}
\sup_{\stackrel{\xi\ne 0 }{ \xi \notin \cone_{+}}} \frac{\|DF_x^{tr}(\xi)\|}{\|\xi\|}\, ,
\end{align}
and 
\begin{equation}\label{gendet}
|\det (D F|_{\cone_+^\perp})|(x):=\inf_{L^\perp \subset \cone_+} |\det (DF|_L)|(x)\,  ,
\end{equation}
where $\inf_{L^\perp \subset \cone_+}$ denotes the infimum over all $d_s$-dimensional subspaces $L\subset \real^d$ with normal subspace contained in $\cone_+$,
and $\det (DF|_L)(x)$ is 
the  expansion factor of the linear mapping $DF_x:L\to DF_x(L)$, with respect to 
the volume induced by the Riemannian metric on each $d_s$-dimensional linear subspace.

\medskip 
The key lemma follows:
\begin{lemma}[Local Lasota--Yorke estimate]\label{LLYU}
Let $\cone$ and $\cone'$ be two cone pairs.
 and let $K\subset \real^d$ be compact. 
For any  $-(r-1)<s<-t<0$  there exists $C>0$ so that
for every $C^{r-1}$ function $f$ 
supported in the interior of $K$ 
and
every $C^r$  diffeomorphism  $F$ defined on
an open subset $U$ of $\real^d$  containing $K$ which is regular cone hyperbolic from $\cone_\pm$
to $\cone_\pm'$, and such that
$\|F\|_-\ge 1$, the following holds: let $\phi \in C^\infty$
be supported in $K$ and $\equiv 1$ on  the support of $f$. Set
$$
\MM(\varphi)=f \cdot (\varphi \circ F)\, , 
$$
then  there exists a decomposition
$\MM= \MM_b + \MM_c=\phi \MM_b+\phi\MM_c$  so that, denoting 
\begin{equation}\label{pastroptot}
C(F,\Gamma, s)= |s|\|F^{-1}|_{F(\Gamma)}\|_{C^r}(1+\max\{\|F\|_-^s,\|F\|_-^{-1}\})
\, ,
\end{equation}
 we have
\begin{align}
\label{line1}&
   \|   \MM_b \varphi \|_{\UU_p^{\cone'_\pm,t,s}}\le \nu_b  \|   \varphi \|_{\UU_p^{\cone_\pm,t,s}} \mbox{ where }
\\
\nonumber &\,\, 
\nu_b:=  C \frac{C(F,\Gamma,s) \|f \circ F^{-1}\|_{C^{r-1}(F(\Gamma))} \|F\|_+^t+\sup|f|
  \|F\|_-^s  \|F\|_{--}^t}
{\inf |\det (DF|_{(\cone'_+)^\perp})|^{1/p}} \, ,
\end{align}
and $\phi \MM_c$ is a compact operator from $\UU_p^{\cone_\pm,t,s}(F(K))$
to $\UU_p^{\cone'_\pm,t,s}(K)$ so that, in addition, for any $\delta >0$, there exists a constant $C_{F,f,\delta}$ so that for any $n_0\ge 1$
\begin{align}
\label{line2}&
   \|  (\phi - \RR_{n_0}) \MM_c \varphi \|_{\UU_p^{\cone'_\pm,t,s}}
%\\&\,\, 
\le  C_{F,f,\delta} 2^{-(r-1-\delta-t)n_0}
 \|   \varphi \|_{\UU_p^{\cone_\pm,t,s}} \, ,
\end{align}
where
\begin{equation}
\label{alzh} \RR_{n_0}(\varphi) = \phi \cdot  \sum_{n \le n_0} \psi_n^{Op}( \varphi)\, .
\end{equation} 
\end{lemma}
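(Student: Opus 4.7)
The plan is to follow the Lasota--Yorke scheme for ``microlocal'' anisotropic spaces as in \cite{BT2}, adapted to the extra leaf Paley--Littlewood layer of the $\UU^{t,s}_p$ norm and using that $F$ maps output admissible leaves to input admissible leaves. First, I unfold both norms: for an output leaf $\Gamma' \in \FF(\cone'_+, C_\FF)$ and indices $\ell, \ell_s \ge 0$, the goal is to bound
$2^{\ell t + \ell_s s}\|\psi^{Op(\Gamma')}_{\ell_s} \psi^{Op}_\ell \MM_\bullet \varphi\|_{L_p(\Gamma')}$
uniformly in $(\Gamma', \ell, \ell_s)$ for $\bullet = b, c$. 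Since $F$ is regular cone hyperbolic, the image $\Gamma = F(\Gamma')$ belongs to $\FF(\cone_+, C'_\FF)$ for a universal $C'_\FF$, so pulling back through $F$ and using the change of variable on $L_p$ with Jacobian $|\det(DF|_{T\Gamma'})| \ge \inf |\det(DF|_{\cone_+^\perp})|$ reduces us to estimating leaf Besov norms of compositions of Fourier-localized pieces on $\Gamma$.

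To define $\MM = \MM_b + \MM_c$, I introduce a frequency threshold adapted to the transport of cotangent directions by $DF^{tr}$: directions in $\cone_+$ stretch by at most $\|F\|_+$, while directions outside $\cone_+$ stretch by at least $\|F\|_-$ and are forced into $\cone'_-$. Decomposing $\varphi$ both in full-space Paley--Littlewood ($\sum_n \psi^{Op}_n$) and in leaf Paley--Littlewood ($\sum_{n_s} \psi^{Op(\Gamma)}_{n_s}$), I obtain a double sum for $\psi^{Op(\Gamma')}_{\ell_s} \psi^{Op}_\ell \MM \varphi$. The diagonal (resonant) pairings---those $(n, n_s)$ for which the input frequencies get transported by $DF^{tr}$ into the output packet $(\ell, \ell_s)$---will be absorbed into $\MM_b$. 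The off-diagonal (non-resonant) remainder defines $\MM_c$.

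For the bound on $\MM_b$ in \eqref{line1}, I split the resonant contribution according to whether the input frequency $\eta$ lies in $\cone_+$ or its complement. On $\cone_+$ the weight $2^{\ell t}$ contributes the factor $\|F\|_+^t$ (since input frequencies stretch by $\|F\|_+$), and the extra prefactor $\|f \circ F^{-1}\|_{C^{r-1}(F(\Gamma))}$ arises from the Leibniz-type estimate when commuting $\psi^{Op}_\ell$ through multiplication by $f$ and across $F$. On $\real^d \setminus \cone_+$, the image sits in $\cone'_-$, meaning that after pulling back via $F$ the output weight $2^{\ell t + \ell_s s}$ is entirely absorbed into the leaf direction; this converts the $t$-weight into an $(t+s)$-weight (negative, since $t+s<0$), and the gain is $\|F\|_-^s$, small because $s<0$ and $\|F\|_-\ge 1$. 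The factor $\|F\|_{--}^t$ dominates the leftover full-space excursion along this branch, and the common denominator $\inf |\det(DF|_{\cone_+^\perp})|^{1/p}$ is the Jacobian already identified. The prefactor $C(F,\Gamma,s)$ in \eqref{pastroptot} arises when commuting the leaf Paley--Littlewood $\psi^{Op(\Gamma)}_{n_s}$ through the reparametrization $F^{-1}|_{F(\Gamma)}$, whose $C^r$ norm enters as indicated.

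For \eqref{line2} and compactness of $\phi \MM_c$, the non-resonant contributions admit repeated integration by parts (non-stationary phase) in the oscillatory integral for $\psi^{Op}_\ell(f \cdot (\psi^{Op}_n \varphi) \circ F)$, using the $C^{r-1}$ regularity of $f$ and $C^r$ regularity of $F$. Each integration by parts produces a factor $2^{-(n-\ell-O(\log\|F\|_\pm))}$ when frequencies are separated, so summing against the frequencies above $n_0$ (those not captured by $\RR_{n_0}$) gives the rate $2^{-(r-1-\delta-t)n_0}$: here $-t$ is the loss from the $2^{\ell t}$ weight in the output norm, and $\delta$ is any small loss in the number of integrations. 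Compactness then follows from Lemma~\ref{finiterankU}, which approximates $\RR_{n_0}$ by finite-rank operators. The main obstacle will be orchestrating the two Paley--Littlewood layers so that the resonant pairings are counted exactly once and the off-diagonal decay is uniform in $\Gamma' \in \FF(\cone'_+)$; the $C^r$ bounds built into Definition~\ref{fakke} of $\FF(\cone'_+, C_\FF)$ are precisely what makes uniformity possible, and the coupling between full-space and leaf wave packets is the technical heart of the argument.
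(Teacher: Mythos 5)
Your proposal identifies the right ingredients (regular cone hyperbolicity transporting leaves, Jacobian renormalisation, resonant/non-resonant splitting, non-stationary phase for the error) but the central structural choice does not work as stated: you propose to build $\MM_b$ and $\MM_c$ from ``resonant pairings'' of the full-space Paley--Littlewood level $n$ with the \emph{leaf} Paley--Littlewood level $n_s$, but the leaf operator $\psi^{Op(\Gamma)}_{n_s}$ depends on the test leaf $\Gamma = F(\Gamma')$. The lemma asserts a single, fixed operator decomposition $\MM = \MM_b + \MM_c$, valid before any leaf is chosen, so a $\Gamma$-dependent splitting cannot define $\MM_b$ or $\MM_c$. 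The paper avoids this by defining the global split purely in terms of full-space dyadic frequencies together with \emph{cotangent cone} truncations $\psi_{\Theta,\ell,\tau}$ (with $\tau\in\{+,-\}$), via an arrow relation $(\ell,\tau)\hookrightarrow(n,\sigma)$ that references only $\|F\|_+$, $\|F\|_{--}$ and a threshold $m_0$; this makes $\MM_b$, $\MM_c$ genuinely leaf-independent.

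The leaf-wise decomposition you want to use is in fact present in the paper, but it lives inside the proof of Sublemma~\ref{lesublemma}, not in the definition of $\MM_b$: there one introduces auxiliary leaf-dependent operators $\MM_{b,\tilde\Gamma}$, $\MM_{c,\tilde\Gamma}$ for each $\tilde\Gamma=\Gamma+x$, proves estimates for those, and then glues them back into a statement about the fixed $\MM$ by averaging over translations $x\in\real^d$ against the full-space symbol $\FFF^{-1}(\psi_{\Theta',n,-})(-x)$. The quantitative input that makes this gluing work is the kernel bound \eqref{VIF}, which couples the cotangent cone truncation $\psi_{\Theta',n,-}$ to the leaf wave packet $\psi^{Op(\Gamma+x)}_{n_s}$ and crucially fails for the $(n,+)$ component (as noted in the paper's footnote). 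You recognise at the end that ``the coupling between full-space and leaf wave packets is the technical heart,'' but the proposal supplies no mechanism for it; without the translation-averaging trick and the $(n,-)$ restriction, the argument does not close, and $\MM_b$ as you have defined it is not even a well-posed operator. Fixing this requires reorganising the proof along the paper's two-layer scheme (leaf-independent global split via cotangent cones; leaf estimates via an auxiliary, leaf-dependent split plus coupling), rather than merging the two Paley--Littlewood layers into one decomposition.
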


Remark~\ref{nucc} below explains why   the above Lasota--Yorke lemma can probably be enhanced to
give a ``nuclear power decomposition.''

We end this subsection with a Leibniz bound:

\begin{lemma}[Leibniz bound on $\UU^{t,s}_p$]
\label{LeibSobhh}
Let $r>1$, and let $-r+1<s<-t<0$. If $f:\real^d\to \complex$
is $C^{r-1}$ and supported in a compact
set $K$ and if   $\real^d \setminus \cone_+ \cc\cone'_- $,
then  for all $\varphi \in\UU^{\cone_\pm, t,s}_p(K)$, we have
$$
\|f \varphi\|_{\UU^{\cone_\pm',t,s}_p(K)}\le
 C
\|f \|_{C^{r-1}}  \|\varphi\|_{\UU^{\cone_\pm, t,s}_p(K)}\, .
$$
\end{lemma}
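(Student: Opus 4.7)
The plan is to reduce the anisotropic Leibniz bound to the classical Besov multiplier theorem applied fibrewise on admissible leaves, combined with a frequency-localized commutator estimate. First I would unpack the cone hypothesis: fix a target leaf $\Gamma\in\FF(\cone'_+,C_\FF)$. The condition $\real^d\setminus\cone_+\Subset\cone'_-$, together with $\cone'_+\cap\cone'_-=\{0\}$, forces $\cone'_+\subset\mathrm{int}(\cone_+)\cup\{0\}$, i.e.\ $\cone'_+\Subset\cone_+$. By monotonicity in Definition~\ref{fakke} and the uniform $C^r$-bounds built into $\FF$, this yields $\Gamma\in\FF(\cone_+,C'_\FF)$ for some $C'_\FF$ depending only on $C_\FF$ and on the cone gap. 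Consequently the right-hand-side norm already controls every Paley--Littlewood piece of $\varphi$ evaluated on $\Gamma$:
$$2^{nt}\|\psi_n^{Op}\varphi\|^s_{p,\Gamma}\le \|\varphi\|_{\UU^{\cone_\pm,t,s}_p}\qquad\text{for all }n\ge 0.$$

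Next, for each $\ell\ge 0$, I would split
$$\psi_\ell^{Op}(f\varphi)=f\cdot\psi_\ell^{Op}\varphi+[\psi_\ell^{Op},M_f]\varphi.$$
For the pointwise-product term, the chart $\pi_\Gamma^{-1}:\pi_-(\Gamma)\to\Gamma$ is a $C^r$-diffeomorphism whose $C^r$-norm is controlled by $C_\FF$, so $f\circ\pi_\Gamma^{-1}$ lies in $C^{r-1}(\real^{d_s})$ with norm $\le C\|f\|_{C^{r-1}}$. Since $|s|<r-1$ by hypothesis, the standard multiplier theorem for $B^s_{p,\infty}(\real^{d_s})$ (see e.g.\ \cite[\S 4.6.4]{RS}) gives
$$\|f\cdot\psi_\ell^{Op}\varphi\|^s_{p,\Gamma}=\|(f\circ\pi_\Gamma^{-1})(\psi_\ell^{Op}\varphi\circ\pi_\Gamma^{-1})\|_{B^s_{p,\infty}(\real^{d_s})}\le C\|f\|_{C^{r-1}}\|\psi_\ell^{Op}\varphi\|^s_{p,\Gamma}.$$
Multiplying by $2^{\ell t}$ and invoking the previous display bounds this contribution by $C\|f\|_{C^{r-1}}\|\varphi\|_{\UU^{\cone_\pm,t,s}_p}$.

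For the commutator, I would use its kernel representation
$$[\psi_\ell^{Op},M_f]\varphi(x)=\int(f(y)-f(x))(\FFF^{-1}\psi_\ell)(x-y)\varphi(y)\,dy,$$
expand $f(y)-f(x)$ by Taylor around $x$ to order $\lfloor r-1\rfloor$ with a H\"older remainder, and observe that each monomial $(y-x)^\alpha$ converts the kernel into $z^\alpha(\FFF^{-1}\psi_\ell)(z)$, whose $L_1$-norm is $\le C2^{-\ell|\alpha|}$ thanks to the derivative bounds \eqref{betagood}. Summing over $|\alpha|\le\lfloor r-1\rfloor$, using $\|\partial^\alpha f\|_\infty\le\|f\|_{C^{r-1}}$ on each coefficient, and absorbing the H\"older remainder produces a net gain of $2^{-\ell(r-1)}\|f\|_{C^{r-1}}$. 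Coupled with a Paley--Littlewood decomposition $\varphi=\sum_n\psi_n^{Op}\varphi$ and the leaf-Besov bounds established above on each piece, this yields a geometric series convergent under $r-1>t$; after multiplying by $2^{\ell t}$ and taking the supremum over $\ell$ and $\Gamma$, one obtains the claimed bound.

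The main technical obstacle is the transfer of the commutator gain from $L_p(\real^d)$ to the leaf-Besov scale $\|\cdot\|^s_{p,\Gamma}$: the Paley--Littlewood convolution kernels live on $\real^d$, whereas the target norm along $\Gamma$ uses the $\real^{d_s}$-Fourier structure pulled back through $\pi_\Gamma^{-1}$. Controlling how an $L_1$-bounded $\real^d$-convolution kernel acts on leaf-Besov norms uniformly over $\Gamma\in\FF(\cone_+,C_\FF)$ is exactly the coupling of $\real^d$-wave packets with leaf wave packets signalled around \eqref{magic} and used in Sublemma~\ref{lesublemma} for the Lasota--Yorke bound; the same device is what allows the commutator estimate above to survive intact on $\|\cdot\|^s_{p,\Gamma}$.
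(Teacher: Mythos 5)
Your proposal departs from the paper's own argument, which treats the Leibniz bound as an immediate corollary of the Lasota--Yorke Lemma~\ref{LLYU} applied with $F=\id$: the bound \eqref{line1} on $\MM_b$ then reduces to $\nu_b\le C\|f\|_{C^{r-1}}$, and the integration-by-parts estimate \eqref{111} gives boundedness of $\MM_c$. Your alternative, a product-plus-commutator split $\psi_\ell^{Op}(f\varphi)=f\cdot\psi_\ell^{Op}\varphi+[\psi_\ell^{Op},M_f]\varphi$, is a genuinely different route. The cone reduction $\cone'_+\cc\cone_+$ and hence $\FF(\cone'_+)\subset\FF(\cone_+)$ is correct, and the treatment of the product term via the leafwise $B^s_{p,\infty}$ multiplier theorem is sound. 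However, the commutator part has a real gap. The $L_1(\real^d)$-kernel estimates you obtain from Taylor expansion do not transfer to the leaf-Besov norm $\|\cdot\|^s_{p,\Gamma}$ by themselves: after Paley--Littlewood decomposing $\varphi=\sum_n\varphi_n$, the norm $\|\varphi_n\|^s_{p,\Gamma}$ scales as $2^{-nt}$ in the $\real^d$-frequency $n$, but to pass from leaf-Besov to $L_p(\Gamma)$ (which is what an $L_1$-convolution bound together with \eqref{magic5} would control) one pays a factor $2^{n_s|s|}$ in the \emph{leaf} frequency $n_s$, which is not controlled by $n$ without further input. Since $|s|>t$, this factor can dominate $2^{-nt}$, so the resulting series need not converge. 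The paper resolves exactly this decoupling by first splitting each $\psi_\ell^{Op}\varphi$ according to cotangent-space cones via \eqref{magic3}, then treating the $(+)$-cone and $(-)$-cone pieces separately (Sublemma~\ref{lesublemma}, equations \eqref{sublemma} and \eqref{sublemma'}, with the wave-packet coupling \eqref{VIF}). Your final paragraph correctly identifies this as "the main technical obstacle," but asserting that the paper's device "allows the commutator estimate above to survive intact" does not close the gap: that device is built around the cotangent cone split and the $\hookrightarrow$ relation, which your commutator argument never introduces, and it cannot simply be appended as a black box to a Taylor kernel estimate.

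Two secondary issues. First, the claimed "net gain of $2^{-\ell(r-1)}$" from the Taylor expansion is not correct: the $|\alpha|=1$ term alone gives $L_1$-kernel size $\sim 2^{-\ell}$, and only the H\"older remainder contributes $2^{-\ell(r-1)}$; moreover for $|n-\ell|>5$ what you actually need is a gain in $\max\{n,\ell\}$, which the $\ell$-scaling of the kernel does not deliver when $n\gg\ell$. Second, the Taylor coefficients $\partial^\alpha f$ with $|\alpha|\ge1$ are only $C^{r-1-|\alpha|}$, which may fall below $|s|$ when $|s|$ is close to $r-1$, so the leafwise multiplier theorem does not apply to them; controlling their action then forces you back into exactly the coupling problem above.
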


We expect that $\MM$ is a bounded operator even if
$F$ is not cone hyperbolic,  and that the Leibniz inequality 
above also holds without the conditions on $\cone_\pm$ and $\cone'_\pm$.

\subsection{Introducing cones --- Sublemma~\ref{lesublemma}}\label{PPPP}

In the proof of Lemma~\ref{LLYU}, it will
be necessary to distinguish the frequencies in the
cotangent space which are in $(DF^{tr})^{-1} (\real^d \setminus \cone_+)$.
Towards this goal, recalling the function $\chi$ from (\ref{defchi}),
and letting $\xi\in \real^d$,  define
$\tilde \psi_{0}(\xi)=\chi(2^{-1}\|\xi\|)$
and
\begin{eqnarray}
\label{deftildepsi} \tilde \psi_{\ell}(\xi)&=&
\chi(2^{-\ell-1}\|\xi\|)-\chi(2^{-\ell+2}\|\xi\|)\, , \qquad \mbox{ $\ell\ge 1$.}
\end{eqnarray}
Note that  the $\tilde \psi_\ell$ satisfy \eqref{betagood} and, in addition, $\tilde \psi_{\ell}(\xi)=1$ 
if $\xi \in \supp (\psi_{\ell})$ (where the functions $\psi_\ell$, with ``thinner supports,''
giving a partition of unity were defined
in (\ref{2.49})).
Next, for $\sigma\in \{+,-\}$, write
\begin{equation}\label{defb}
\psi_{\Theta,\ell, \sigma}(\xi)= \psi_\ell(\xi) \varphi_\sigma
\left (\frac{\xi}{\|\xi\|}\right )\, ,
\, \, \tilde \psi_{\Theta,\ell, \sigma}(\xi)=\tilde \psi_\ell(\xi) \varphi_\sigma
\left (\frac{\xi}{\|\xi\|}\right )\, .
\end{equation}
We claim\footnote{Compare to \cite[(A.5)]{BT1} where the situation was a bit different.} 
that there exists a constant $C< \infty$ so that for all $\ell$
and all $\varphi$
\begin{equation}\label{magic3} 
 \|   \psi_{\ell}^{Op} (\varphi )\|^s_{p,\Gamma}\le 
 \| \tilde \psi_{\Theta,\ell,+}^{Op}\psi_{\ell}^{Op}( \varphi )\|^s_{p, \Gamma}+
\|  \tilde \psi_{\Theta,\ell,-}^{Op}\psi_{\ell}^{Op}( \varphi )\|^s_{p, \Gamma}
\le  2C \|  \psi_{\ell}^{Op} (\varphi )\|^s_{p,\Gamma}\, .
\end{equation}
The first inequality is just the triangle inequality since 
$ \psi_\ell=(\varphi_+ +\varphi_-) \tilde \psi_\ell \psi_\ell$.
For the second inequality, 
it is enough to show that for $\sigma=\pm$ and all $\varphi$
\begin{equation}\label{magic34}
\sup_\ell \|  \tilde \psi_{\Theta,\ell,\sigma}^{Op}(\tilde \varphi )\|^s_{p, \Gamma}
\le C \| \tilde \varphi \|^s_{p,\Gamma} \, .
\end{equation}
The bound \eqref{magic34} is a consequence of
the easily proved fact that (see e.g. \cite{BT1})
\begin{equation}\label{eqn:invF}
\sup_{(\ell,\sigma)} \|\FFF^{-1}(\tilde \psi_{\Theta, \ell,\sigma})\|_{L_1(\real^d)}<\infty \, ,
\end{equation}
together with  the following version of Young's inequality
(which can be proved like \cite[Lemma 4.2]{BT2}, see also \cite[Chapter 5]{Ba}, by
using that any translation $\Gamma + x$ of $\Gamma\in \FF$ also belongs to  $\FF$):
\begin{equation}\label{magic5}
\| \hat \psi * \varphi \|^s_{p, \Gamma}
\le  \|\hat \psi\|_{L_1(\real^d)} \sup_{x\in \real^d} \| \varphi \|^s_{p,\Gamma+x}
\le \|\hat \psi\|_{L_1(\real^d)} \sup_{\tilde \Gamma \in \FF} \| \varphi \|^s_{p,\tilde \Gamma}\, .
\end{equation}

In the sequel, we shall sometimes abusively neglect to insert the operators $\tilde \psi_\ell^{Op}$ or
 $\tilde \psi_{\Theta,\ell,\sigma}^{Op}$, to simplify notation.
(In view of Young's inequality \eqref{magic5} and the 
almost orthogonality property $\psi_{n}^{Op}
\circ   \tilde \psi_{ \ell}^{Op}
\equiv 0$ if   $|n-\ell|>5$, this does not create problems.)

\medskip
The proof of Lemma~\ref{LLYU} will be based on the following sublemma:

\begin{sublemma}\label{lesublemma}
Let $1\le p <\infty$, let $-(r-1)<s<0$, and
let $\Theta$, $\Theta'$ and $\FF$ be fixed. Then there exists  
$C$ so that for any  $F$, $f$, and $\MM$ as in Lemma~\ref{LLYU}, 
there exists $m_0$ so that for all  $n\ge m_0$, 
 all $\Gamma \in \FF(\cone_+)$, and all $\varphi$,
\begin{equation}\label{sublemma'}
\|   \psi_{\Theta',n,-}^{Op}\MM(\varphi)\|^s_{p, \Gamma}
\le C \sup_K|f|  \frac{ \|F\|_-^s}
 {\inf |\det (DF|_{(\cone'_+)^\perp})|^{1/p}}
\sup_{\tilde \Gamma \in \FF(\cone_+)} \| \varphi\|^s_{p, \tilde \Gamma}
 \, ,
\end{equation}
and, in addition, 
for all $\Gamma \in \FF(\cone_+)$,  and all
$\varphi$, recalling \eqref{pastroptot},
\begin{equation}\label{sublemma}
\| \MM(\varphi)\|^s_{p, \Gamma}
\le C  \frac{C(F,\Gamma,s)\|f \circ F^{-1}\|_{C^{r-1}(F(\Gamma))}  }{\inf |\det (DF|_{(\cone'_+)^\perp})|^{1/p}} 
 \| \varphi\|^s_{p, F(\Gamma)} \, .
\end{equation}
 \end{sublemma}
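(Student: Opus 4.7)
The plan is to reduce both estimates to statements about classical Besov spaces on $\real^{d_s}$ via the charts $\pi_\Gamma^{-1}$, then exploit change of variables and non-stationary phase via cone hyperbolicity. Throughout, regular cone hyperbolicity combined with the $C^r$ extension of $F$ sends $\FF(\cone_+)$ into $\FF(\cone'_+)$, so $\Gamma' := F(\Gamma) \in \FF(\cone'_+)$ and the reparametrizations $\pi_\Gamma^{-1}$, $\pi_{\Gamma'}^{-1}$ have $C^r$ norms bounded by a constant depending only on $C_\FF$.

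For the unconditional bound \eqref{sublemma}, I would change variables. Setting $\sigma := \pi_{\Gamma'} \circ F \circ \pi_\Gamma^{-1}$, a $C^r$ diffeomorphism between open subsets of $\real^{d_s}$, one has
\[
\bigl(\MM\varphi\bigr)\big|_\Gamma \circ \pi_\Gamma^{-1} = \bigl(f \circ \pi_\Gamma^{-1}\bigr) \cdot \bigl[(\varphi \circ \pi_{\Gamma'}^{-1}) \circ \sigma\bigr].
\]
The claim reduces to two classical facts on $B^s_{p,\infty}(\real^{d_s})$, valid for $|s|<r-1$: composition with a $C^r$ diffeomorphism is bounded, with constant controlled by the $C^r$ norm of the inverse and by the $L_p$-Jacobian; and multiplication by a $C^{r-1}$ function is bounded, with norm controlled by the multiplier's $C^{r-1}$ norm. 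Moving $f$ to the target side via $\sigma$ produces $\|f \circ F^{-1}\|_{C^{r-1}(F(\Gamma))}$. The Jacobian of $\sigma$, which up to the chart Jacobians is the determinant of $F$ restricted to $d_s$-subspaces with normals in $\cone'_+$, is bounded below by $\inf|\det(DF|_{(\cone'_+)^\perp})|$ via \eqref{gendet}, producing the claimed $L_p$ factor. The remaining constant $C(F,\Gamma,s)$ records the dependence on $\|F^{-1}|_{F(\Gamma)}\|_{C^r}$ and the endpoint behavior of the Besov composition estimates as $s$ approaches $0$ or $-(r-1)$.

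For the cone-restricted bound \eqref{sublemma'}, the gain factor $\|F\|_-^s$ reflects the $\cone'_-$-localization enforced by $\psi_{\Theta',n,-}^{Op}$. The Schwartz kernel of $\psi_{\Theta',n,-}^{Op} \MM$ reads
\[
K_n(x,y) = (2\pi)^{-d}\, f(F^{-1}(y))\, |\det DF^{-1}(y)| \int_{\real^d} e^{i(x-F^{-1}(y))\xi}\,\psi_{\Theta',n,-}(\xi)\,d\xi.
\]
I would dyadically decompose $\varphi = \sum_m \tilde\psi_m^{Op}\varphi$, so that the composite oscillatory integral for $\psi_{\Theta',n,-}^{Op}\MM(\tilde\psi_m^{Op}\varphi)$ has $y$-phase gradient $\eta - (DF^{tr})^{-1}\xi$ with $\xi \in \cone'_-$ at scale $2^n$ and $\eta$ at scale $2^m$. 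Non-stationary phase exploiting the cone-hyperbolic lower bound $\|DF^{tr}\eta\| \ge \|F\|_-\|\eta\|$ for $\eta \notin \cone_+$, together with the splitting \eqref{magic3}, yields rapid decay unless $m \lesssim n - \log_2\|F\|_-$; the diagonal scales contribute with ambient weight $2^{ns} = \|F\|_-^s \cdot 2^{ms}$, producing the factor $\|F\|_-^s$ (using $s<0$ and $\|F\|_-\ge 1$). Summability in $m$ and the transfer from ambient Littlewood--Paley blocks to intrinsic ones on $\Gamma$ come from \eqref{magic3} and the Young-type bound \eqref{magic5}; multiplication by $f$ gives $\sup_K|f|$, while the $L_p$ change of variables provides $\inf |\det(DF|_{(\cone'_+)^\perp})|^{-1/p}$.

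The main obstacle is the interplay between the ambient cut-offs $\psi_{\Theta',n,-}^{Op}$ acting on all of $\real^d$ and the intrinsic Besov norm $\|\cdot\|^s_{p,\Gamma}$ defined by Fourier analysis along $\Gamma$ alone. Transferring the $d$-dimensional frequency localisation in $\cone'_-$ into genuine $d_s$-dimensional oscillation on $\Gamma'$ (which is normal to $\cone'_+$ and hence transverse to $\cone'_-$) is where the angular condition in Definition~\ref{fakke} plays its decisive role: it guarantees that the restriction to $\Gamma'$ of a wave packet at scale $2^n$ in the $\cone'_-$-direction of the cotangent space remains oscillatory at scale $2^n$ along $\Gamma'$. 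Rigorously controlling the cross-terms between non-matching ambient and intrinsic scales, via \eqref{magic3} and a wave-packet decomposition in the spirit of \cite{BT2}, is where the bulk of the technical work lies.
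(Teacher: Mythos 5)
Your handling of \eqref{sublemma} is fine: the reduction via $\sigma=\pi_{\Gamma'}\circ F\circ\pi_\Gamma^{-1}$ to classical Besov composition and pointwise-multiplier estimates on $\real^{d_s}$ is essentially what the paper does (via the duality $B^s_{p,\infty}=(b^{|s|}_{p/(p-1),1})'$, which is a cosmetic difference), and your reading of where $C(F,\Gamma,s)$ and the $L_p$-Jacobian come from is correct.

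For \eqref{sublemma'} there is a genuine gap, and it is exactly at the step you label ``the bulk of the technical work.'' Your plan is a one-level ambient argument: decompose $\varphi=\sum_m\tilde\psi_m^{Op}\varphi$ in $\real^d$, do non-stationary phase on the kernel of $\psi_{\Theta',n,-}^{Op}\MM\tilde\psi_m^{Op}$, and then hope to translate ambient Littlewood--Paley blocks into the intrinsic $\|\cdot\|^s_{p,\Gamma}$ norm. But the right-hand side of \eqref{sublemma'} is $\sup_{\tilde\Gamma}\|\varphi\|^s_{p,\tilde\Gamma}$, an intrinsic Besov norm built from the $d_s$-dimensional operators $\psi_{\ell_s}^{Op(\tilde\Gamma)}$, and your ambient decomposition never produces those blocks; nor does your estimate $2^{ns}\lesssim\|F\|_-^s2^{ms}$ interact with the $2^{\ell_s s}$ weighting in Definition~\ref{iso}. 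The $\|F\|_-^s$ gain in the paper comes from a purely leaf-wise relation $\ell_s\hookrightarrow_s n_s$ defined by $\|F\|_-2^{\ell_s-4}\le 2^{n_s}$ on the intrinsic scales $(n_s,\ell_s)$, not on ambient scales $(n,m)$ — it is a leaf-wise Lasota--Yorke decomposition $\MM=\MM_{b,\tilde\Gamma}+\MM_{c,\tilde\Gamma}$, to be run separately on each admissible leaf.

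The missing structural idea is the coupling between this leaf-wise analysis and the ambient cutoff $\psi_{\Theta',n,-}^{Op}$. The paper's mechanism is to write $\psi_{\Theta',n,-}^{Op}(\MM\varphi)(w)=\int(\FFF^{-1}\psi_{\Theta',n,-})(-x)(\MM\varphi)(w+x)\,\D x$ using \eqref{neat}, apply the leaf-wise decomposition on each translate $\tilde\Gamma=\Gamma+x$ (so that $w+x\in\tilde\Gamma$), and then average over $x$. Term \eqref{ttt} is handled by \eqref{sublemmagamma} plus the translation-invariant Young bound \eqref{magic5} — this is where the $\sup_{\tilde\Gamma}$ on the right arises. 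Term \eqref{uuu} is handled by \eqref{sublemmagamma'} with $\tilde n_s$ large. The hard middle term \eqref{vvv} requires the kernel estimate \eqref{VIF} on $\int_{\real^d}V^{n,-}_{n_s,\Gamma+x}(w,y)\,\D x$, which after $(r-1)$ regularised integrations by parts in $x$ shows that low intrinsic scales $n_s$ cannot interact with large ambient scales $n$ for the $(n,-)$ cutoff; the footnote to \eqref{VIF} points out that the analogous estimate with $(n,+)$ fails, which is precisely why \eqref{sublemma'} is only stated with $\psi_{\Theta',n,-}^{Op}$. Your sketch identifies the role of the $\cone'_-$ frequency localisation qualitatively, but without the convolution-over-translations trick, the leaf-wise $\hookrightarrow_s$ decomposition, and the \eqref{VIF} kernel estimate, the transfer from ambient to intrinsic scales that you flag as problematic is in fact not resolved.
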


Postponing the proofs of Lemma~\ref{LLYU} and Sublemma~\ref{lesublemma} to \S\ref{postt},
we next prove the theorem:

\begin{proof}[Proof of Theorem~\ref{refereewish}]
If the local map 
$F$ is $T^{-m}$ in charts, where $T$ is a $C^r$ Anosov
diffeomorphism, the bound on $\MM_b$ in  Lemma~\ref{LLYU} can
be enhanced, as we explain next.   First, if $m$ is
large enough and $K$ is small enough (the latter follows
from taking suitable $m$-dependent partitions of unity, as part of our
pedestrian ``microlocal'' approach), we may assume in addition
 that $F^{-1}$ is cone hyperbolic from $\cone'_\pm$ to $\cone_\pm$ and,
recalling \eqref{pastroptot},
that 
\begin{equation}\label{addcond}
|\det (DF|_{\cone_+^\perp})|>1\, ,
\, \, \| F\|_{--}\ge \|F\|_- > 1\, , \,\,
\|F\|_+ < 1\, , \, \,   C(F,\Gamma,s)\le 2 \, .
\end{equation}
Since  $d_s=1$,   we may in addition ensure that 
$\|F\|_- / \| F\|_{--}\le 1$ be arbitrarily close to $1$,
by  taking 
$K$ sufficiently small (via suitably refined partitions of unity).
The factor in  the right-hand side of
\eqref{line1} in Lemma~\ref{LLYU} can then be improved to
\begin{align}
\label{narrowc} &
\nu_b:=
  C 
\frac{\sup_\Gamma \|f \circ F^{-1}\|_{C^{r-1}(F(\Gamma)} \|F\|_+^t+ \sup |f| \|F\|_-^{s+t} }
{\inf |\det (DF|_{(\cone'_+)^\perp})|^{1/p}}\, .
\end{align}
Finally,  
if $F_m=T^{-m}$ and $f_m(x)=\prod_{j=0}^{m-1}(g(T^{-j}(x))$, it is not difficult to see that
for any $\Gamma$
\begin{equation}\label{nowc}
\limsup_{m \to \infty} \biggl ( \frac{\|f_m\circ F_m^{-1}\|_{C^{r-1}(F_m(\Gamma))}}{\sup |f_m|}
\biggr )^{1/m} \le 1\, .
\end{equation}
(Use that all partial derivatives of $F_m^{-1}=T^m$ along the admissible
stable leaf $F_m(\Gamma)$  are bounded by
$C \lambda_s^m$.)
We may thus replace $\|f \circ F^{-1}\|_{C^{r-1}(F(\Gamma))}$
by $C\sup|f|$ in
the bound \eqref{narrowc}. If $p=1$, we claim that this
is  sufficient to get the claimed bound 
\eqref{defQts} on the
essential spectral radius when $d_s=1$: 
Indeed, we may proceed exactly as in \cite[Chapter 5, Proof of Thm 5.1]{Ba} 
(see also \cite{BT2})  using Hennion's theorem, and
suitable charts   to get
bounds by
thermodynamic sums (see \cite[Appendix B]{Ba}) via partitions of unity (adapted
to $T^m$). 
We refer to \cite[Chapter 5, Proof of Thm 5.1]{Ba}
for details. We just mention here that, in the present case, the ``fragmentation lemma'' (used 
to expand along a partition of unity) is just the triangle inequality,
while the ``reconstitution lemma'' (used to regroup the terms from a partition
of unity)
 is the trivial inequality
$\sum| a_k e_k| \le (\sum|a_k|) \sup |e_k|$
combined with the following\footnote{This variant follows from  Lemma~\ref{LLYU}
applied to $F=\id$, using appropriate cones.} variant of  Corollary~\ref{LeibSobhh}: If the $\theta_k$ are smooth functions,
then
 $\sup_k \|\theta_k \varphi \|_{\UU^{t,s}_1}$ may be bounded by
$\| \varphi \|_{\UU^{t,s}_1} \cdot  \sup_k \|\theta_k\|_{C^0} $
plus a term which can be included in the compact term of the decomposition $\LL^m_g$
arising from Lemma~\ref{LLYU}.
\end{proof}

\begin{remark}[The case $d_s>1$]\label{codim}
If $d_s >1$, assuming for simplicity that $F$ has
$d_s$ distinct Lyapunov exponents, we  introduce
 $d_s+1$ cones 
$\{\cone_+, \cone_-^{(1)}, \ldots , \cone_-^{(d_s)}\}$, 
satisfying appropriate strict invariance properties,
an  associated cone system
$\Theta_{d_s}=(\cone_+, \varphi_+,\cone_-^{(j)},  \varphi_-^{(j)}, j=1, \ldots, d_s)$,   
and a
partition of unity 
$\varphi_++\varphi_-=1$, with $\varphi_-=\sum_{j=1}^{d_s} \varphi_-^{(j)}$.  Considering 
the partition of unity
$\psi^{(Op)}_{\Theta_{d_s}, n,+}+\sum_{j=1}^{d_s} (\psi_{\Theta_{d_s}, n,-}^{(j)})^{(Op)}
=\id$ generalising \eqref{defb}, 
and adapting the proofs of
Lemma~\ref{LLYU} and Sublemma~\ref{lesublemma}, replaces $\|F\|_-^{s}\|F\|_{--}^t$
in $\nu_b$ from \eqref{line1} by 
$
\sum_{j=1}^{d_s} (\|F\|^{(j)}_-)^{s}(\|F\|^{(j)}_{--})^t
$
where
$$
\|F\|^{(1)}_{-}
=\inf_{x\in K}
\inf_{\stackrel{\xi\ne 0 }{ \xi \notin \cone_{+}}} \frac{\|DF_x^{tr}(\xi)\|}{\|\xi\|}\, ,
\|F\|^{(1)}_{--}
=\sup_{x\in K}
\sup_{\stackrel{\xi\ne 0 }{ \xi \notin \cone_{+}}} \frac{\|DF_x^{tr}(\xi)\|}{\|\xi\|} \, , 
$$
and, for $j\ge 2$,
\begin{align*}
\|F\|^{(j)}_{-}
&=\inf_{x\in K}
\inf_{\stackrel{\xi\ne 0 }{ \xi \notin (\cone_+ \cup_{k=1}^{j-1}\cone^{(k)}_{-})
}} \frac{\|DF_x^{tr}(\xi)\|}{\|\xi\|}\, ,
\|F\|^{(j)}_{--}
=\sup_{x\in K}
\sup_{\stackrel{\xi\ne 0 }{ \xi \notin  (\cone_+ \cup_{k=1}^{j-1}\cone^{(k)}_{-})  }} \frac{\|DF_x^{tr}(\xi)\|}{\|\xi\|}\, .
\end{align*} 
Just like in the proof of Theorem~\ref{refereewish} for $d_s=1$, we can make $\|F\|^{(j)}_-$
as close as desired to $\|F\|^{(j)}_{--}$, so that \eqref{narrowc} (and thus the bound from
Theorem~\ref{refereewish}
on the essential spectral radius) should also hold
if $d_s > 1$.
\end{remark}

\subsection{Proving Sublemma~\ref{lesublemma}
and Lemma~\ref{LLYU}}
\label{postt}

We first prove the lemma and then the sublemma (both proofs will use the
modified Young inequality \eqref{magic5}):

\begin{proof}[Proof of Lemma~\ref{LLYU}]
We shall use  \eqref{magic3}.
We need more notation: For $m_0\ge 1$ fixed large enough\footnote{The constant $C_{F,f,\delta}$ in \eqref{line2} depends
on $m_0$ but the constant in \eqref{line1} does not.}, depending on
$F$, $\FF$, and $s$, in Sublemma~\ref{lesublemma},
we say that $(\ell,\tau)\hookrightarrow
(n,\sigma)$
if (exactly) one of the following conditions holds:
\begin{itemize}
\item $(\tau,\sigma)=(+,+)$ and $2^n\le \|F\|_{+} 2^{\ell+4}$, 
\item $(\tau,\sigma)=(-,-)$  and 
$2^{m_0} \le 2^n \le 2^{\ell+4}\|F\|_{--}$,  
\item $(\tau,\sigma)=(+,-)$ and 
$2^{m_0} \le 2^n\le 2^{\ell+4}\|F\|_{--}$.
\end{itemize}
Otherwise, we write $(\ell, \tau) \not\hookrightarrow (n,\sigma)$. 
(This is a variant of the notion used in \cite{BT1,BT2}.)

By the definition of $\not\hookrightarrow$ and by cone hyperbolicity,
there exists an integer $N(F)>0$ such that, if 
$(\ell, \tau) \not\hookrightarrow (n,\sigma)$ and 
$\max\{n,\ell\}\ge N(F)$, we have
\begin{equation}\label{lowerbd}
d(\supp(\psi_{\Theta',n,\sigma}), 
DF_{x}^{tr}(\supp(\tilde \psi_{\Theta,\ell,\tau})))
\ge 2^{\max\{n,\ell\}-N(F)} 
\quad \mbox{for $x\in \supp(f)$.}
\end{equation}

We decompose $\MM=\MM_b+\MM_c$ where
$$
\MM_b \varphi =\sum_{(n,\sigma)}\psi_{\Theta', n,\sigma} ^{Op} \sum_{(\ell,\tau)\hookrightarrow
(n,\sigma)}\MM( \psi_{\Theta, \ell,\tau} ^{Op}\varphi) \, , 
$$
and
$$
\MM_c \varphi =\sum_{(n,\sigma)}\psi_{\Theta', n,\sigma} ^{Op} \sum_{(\ell,\tau)\not\hookrightarrow
(n,\sigma)}\MM( \psi_{\Theta, \ell,\tau} ^{Op}\varphi)\, .
$$

We first prove the bound \eqref{line1} on $\MM_b$.
Fix $\Gamma$ and $(n,\sigma)$. We want to estimate
$$
2^{nt-\ell t}\|\psi_{\Theta', n,\sigma} ^{Op}  \sum_{(\ell,\tau)\hookrightarrow
(n,\sigma)}\MM( \psi_{\Theta, \ell,\tau} ^{Op}\varphi)\|^s_{p,\Gamma}.
$$

If  $\sigma=+$,   we have for any $(\ell,\tau) \hookrightarrow (n,+)$ that $\tau=+$, and, since
$t>0$, the definition  ensures
 $2^{nt} \le C \|F\|^t_{+} 2^{\ell t}$.
This implies $\sum_{(\ell,+)\hookrightarrow(n,+)} 2^{nt-\ell t}
\le C \|F\|_+^t$ and 
by \eqref{sublemma} from  Sublemma~\ref{lesublemma} and \eqref{magic34},  we obtain the
term with $\|F\|^t_+$ in  \eqref{line1}.

If $\sigma=-$ and
$\tau=-$,  then since $t>0$,  it follows that
for any $(\ell,-) \hookrightarrow (n,-)$
\begin{equation}\label{expp}
2^{nt} \le C \|F\|^t_{--} 2^{\ell t} \, .
\end{equation}
If $\sigma=-$ and
$\tau=+$,  since $t>0$,  it follows that
for any $(\ell,+) \hookrightarrow (n,-)$
\begin{equation}\label{expp2}
  2^{nt} \le C \|F\|^t_{--} 2^{\ell t}\, .
\end{equation}
So, if  $\sigma=-$ then,  by \eqref{sublemma'} from the Sublemma,  we get  the
term with $\|F\|^s_- \|F\|^t_{--}$ in  \eqref{line1} (if $m_0$ is large
enough).

Recall that for $k\in \integer_+^*$ the $k$-th approximation number of a bounded operator
$\QQ:\BB \to \BB'$  between Banach spaces is
\begin{equation}\label{ak}
a_k(\QQ)=
\inf \{ \| \QQ- \RR\|_{\BB\to \BB'} \mid \mbox{rank}\, (\RR) < k \} \, .
\end{equation}
Clearly,  $\lim_{k\to \infty} |a_k(\QQ)|=0$ implies that
$\QQ$ is compact.
Using the bound \eqref{line2} and Lemma~\ref{finiterankU} to control
the approximation numbers of $\phi \MM_c$ (as
in \cite{BT2} and \cite{Ba})
implies the compactness claim on
$\phi \MM_c$.

\smallskip
It remains to show the bound \eqref{line2} on $\MM_c$. For this,
we shall use integration by parts as in
\cite{BT1, BT2}:
Recalling the functions $\tilde \psi_\ell$ from
\eqref{deftildepsi},  we claim that it is enough to show
that if $(\ell,\tau)\not\hookrightarrow(n,\sigma)$ then
\begin{equation}\label{111}
\|
\psi_{\Theta',n,\sigma}^{Op} (\MM\, 
\tilde \psi_{\Theta, \ell, \tau}^{Op} \tilde \varphi) \|^s_{p,\Gamma}
\le \sup_{\tilde \Gamma \in \FF(\cone_+)} C_{F,f}2^{-(r-1)\max\{n,\ell\}}\|\tilde \varphi\|^s_{p ,\tilde \Gamma}\, .
\end{equation}
Indeed, since $ \varphi_{\Theta, \ell, \tau}=\tilde \psi_{\Theta, \ell, \tau}^{Op} 
 \psi_{\Theta, \ell, \tau}^{Op} \varphi$, we find for any $\Gamma\in \FF(\cone_+')$
 and any $m_1\ge 10$, using \eqref{111},
%and \eqref{sublemma},
\begin{eqnarray}
\nonumber 
&&\sup_{(n,\sigma)} 2^{nt}
\sum_{(\ell,\tau) \not \hookrightarrow(n,\sigma)}\sum_{ n' > m_1}
\| 
 \psi_{n'}^{Op}( \psi^{Op}_{\Theta',n,\sigma}  \MM \varphi_{\Theta,\ell,\tau}) \|^s_{p,\Gamma}\\
\nonumber&&\qquad\quad\le 
\sup_{(n,\sigma)\, , \, n\ge m_1-5}\quad 2^{nt}
\sum_{(\ell,\tau) \not \hookrightarrow(n,\sigma)}\sum_{ n' > m_1}
\| \psi^{Op}_{\Theta',n,\sigma} * (\MM(\tilde \psi_{\Theta, \ell, \tau}^{Op}  \varphi_{\Theta,\ell,\tau})) \|^s_{p,\Gamma}\\
\nonumber&&\qquad\quad\le \sup_{\tilde \Gamma} C_{F,f}
\sup_{(n,\sigma)\, , \, n\ge m_1-5}
\sum_{(\ell,\tau)} 2^{-(r-1)\max\{n,\ell\}} 2^{(n-\ell) t}
2^{t\ell}
 \| \varphi_{\Theta,\ell,\tau}\|^s_{p,\tilde \Gamma}
\\
\label{hell}&&\qquad\quad\le \sup_{\tilde \Gamma} C_{F,f} \sup_{n\ge m_1-5}
(n+C) 2^{(t-(r-1))n}  \sup_{(\ell,\tau)}
2^{t\ell}\| \varphi _{\Theta,\ell,\tau}\|^s_{p,\tilde \Gamma}\, .
\end{eqnarray}
Thus, using Corollary~\ref{LeibSobhh}
in order to take into account\footnote{We should use here the cone hyperbolicity
assumption to insert intermediate cones here, for simplicity
we disregard this operation.} the factor $\phi$
(this is legitimate since the proof of Corollary~\ref{LeibSobhh}
does not use anything beyond \eqref{111} in the present proof), we get for any   $\varphi$ supported in $F(K)$ that
\begin{equation}
\label{starstar}
\|
\phi
\cdot \bigl ( \sum_{ n > m_1}
\psi_{n}^{Op} (\MM_c \varphi )\bigr ) \|_{\UU^{\cone_\pm', t,s}_p(K)} \le C_{F,f,\delta} 2^{-(r-1-t-\delta) m_1}
\|\varphi  \|_{\UU^{\cone_\pm, t,s}_p(F(K))}\, ,
\end{equation}
 for any $\delta>0$ and any $m_1\ge 1$
(the case $1\le m_1<10$ is trivial), by the definition  of $\MM_c$.
The estimate \eqref{hell} also gives that $\MM_c$ is bounded from $\UU^{\cone_\pm,t,s}_p$
to $\UU^{\cone'_\pm,t,s}_p$.

To prove \eqref{111}, we use \eqref{magic5}  together
with integration by parts:
Since (\ref{111}) is obvious when $\max\{n,\ell\}< N(F)$, we shall assume $\max\{n,\ell\}\ge N(F)$.
We have
\[
\psi_{\Theta',n,\sigma}^{Op} (\MM\, 
\tilde \psi_{\Theta, \ell, \tau}^{Op} \varphi)(x)
=(2\pi)^{-2d}\int V_{n,\sigma}^{\ell,\tau}(x,y) \cdot \varphi\circ F(y) |\det DF(y)|
\D y,
\]
where  
\begin{equation}\label{Vkernel}
V_{n,\sigma}^{\ell,\tau}(x,y)=\int \E^{\I(x-w)\xi+\I(F(w)-F(y))\eta} 
f(w)\psi_{\Theta',n,\sigma}(\xi)
\tilde{\psi}_{\Theta,\ell,\tau}(\eta)\D w \D \xi \D \eta \, .
\end{equation}
Since $\|\varphi \circ F \|^s_{p,\Gamma}
\le C(F)\|\varphi\|^s_{p,F(\Gamma)}$,  the bound (\ref{111}) 
follows if we show that  there exists
$C_{F,f}$ such that for all $(\ell,\tau)\not\hookrightarrow (n,\sigma)$
and all $1<p\le \infty$
the  integral operator 
\[
H^{\ell,\tau}_{n,\sigma}:v\mapsto \int V_{n,\sigma}^{\ell,\tau}(\cdot,y)v(y) \D y
\]
satisfies
$$\|H^{\ell,\tau}_{n,\sigma}(v )\|^s_{p,\Gamma}
\le C_{F,f}\cdot 2^{-(r-1)\max\{n,\ell\}}\sup_{\tilde \Gamma} \|v \|^s_{p,\tilde \Gamma} \,  .
$$

Defining the integrable function $b:\real^d\to 
\real_+$ by
\begin{equation}\label{convol0}
b(x)=
1\quad\mbox{ if $\|x\|\le 1$}, \qquad
b(x)=\|x\|^{-d-1}\quad\mbox{ if $\|x\|> 1$,}
\end{equation}
we set for $m>0$
\begin{equation}\label{eqn:bm}
b_m:\real^d\to \real,\qquad b_m(x)=2^{dm} \cdot b(2^m x)\, ,
\end{equation}
so that  $\|b_m\|_{L_1}=\|b\|_{L_1}$.
The required estimate on $H^{\ell,\tau}_{n,\sigma}$ then follows if we show
\begin{equation}\label{eqn:Kernelest}
|V_{n,\sigma}^{\ell,\tau}(x,y)|\le C_{F,f} 2^{-(r-1)\max\{n,\ell\}}\cdot  b_{\min\{n,\ell\}}(x-y)\, ,
\end{equation}
for some  $C_{F,f}>0$ and all  $(\ell,\tau)\not\hookrightarrow (n,\sigma)$.
Indeed, as the right hand side 
of (\ref{eqn:Kernelest}) is written as a function of $x-y$,
we can apply \eqref{magic5}.
Finally, (\ref{eqn:Kernelest}) can be proved by integrating \eqref{Vkernel}
by parts $(r-1)$
times with respect to $w$ in the sense of Appendix~\ref{partsparts}
and using \eqref{lowerbd}, just like in \cite{BT1,BT2}.
\end{proof}

The  Leibniz bound is now straightforward:

\begin{proof}[Proof of Lemma~\ref{LeibSobhh}]
The claim is an immediate
consequence of \eqref{line1} and  the bound \eqref{111} in the first part of the proof of
Lemma~\ref{LLYU}.
\end{proof}

It remains to prove the sublemma:

\begin{proof}[Proof of Sublemma~\ref{lesublemma}]
Since $-(r-1)<s<0$, 
the bound \eqref{sublemma} is not  difficult to prove, using e.g. the fact
that $B^{s}_{p,\infty}(\real^{d_s})$ is the dual of little Besov space $b^{|s|}_{p/(p-1), 1}(\real^{d_s})$, and is left to the reader.

Fix $\phi$, smooth, compactly supported and $\equiv 1$ on the
support of $f$. To prove  \eqref{sublemma'}, we shall show
that there exists
a constant $C$ (depending
only on the cone systems, and on the support and the $C^r$ norm of $\phi$) and
for any $\delta >0$, there exists a constant $C_{F,f,\delta}$ so that,
 for any $\tilde \Gamma\in \FF$ and any $C^\infty$ function $\varphi$ on $\real^d$,
there exists a decomposition  
\begin{equation}\label{decc}
\MM (\varphi)(w)=\MM_{b,\tilde \Gamma} (\varphi)(w)+
\MM_{c,\tilde \Gamma} (\varphi )(w) \, , \quad \forall w \in \tilde \Gamma\, ,
\end{equation}
so that
\begin{align}
\label{sublemmagamma} 
&\|\phi \MM_{b,\tilde \Gamma} (\varphi)\|^s_{p,\tilde \Gamma}\le 
 C \sup|f| \frac{ \|F\|_-^s}
 {\inf |\det (DF|_{(\cone'_+)^\perp})|^{1/p}} \|\varphi\|^s_{p,F(\tilde \Gamma)}\, ,
 \end{align}
and, for any $\tilde n_s\ge 1$,
\begin{align}
\label{sublemmagamma'}&
  \|   (\phi - \RR_{\tilde n_s,\tilde \Gamma}) 
  \MM_{c ,\tilde\Gamma}(\varphi) \|^s_{p,\tilde \Gamma}
\le  C_{F,f,\delta} 2^{-(r-1-\delta-|s|)\tilde n_s}
 \|   \varphi \|^s_{p,F(\tilde \Gamma)}  \, ,
\end{align}
where, 
 recalling  $\psi_{ n_s} ^{Op(\tilde \Gamma)}$ from \eqref{myop}, we set, for $w\in \tilde \Gamma$,
$$
\RR_{\tilde n_s, \tilde\Gamma}(\varphi) (w)=  \phi(w) \cdot
 \sum_{ n_s \le  \tilde n_s} \psi_{n_s}^{Op(\tilde\Gamma)}( \varphi) (w)\, .
$$

To construct the decomposition and prove the claims above, set, for $w\in \tilde \Gamma$,
$$
\MM_{b,\tilde \Gamma} (\varphi )(w)=\sum_{n_s}\psi_{ n_s} ^{Op(\tilde \Gamma)} \sum_{\ell_s 
\hookrightarrow_s
n_s}\MM(\tilde \psi_{\ell_s} ^{Op(F(\tilde \Gamma))}(\varphi))(w)\, ,
$$
where
\begin{equation}\label{defarrow}
 \ell_s \hookrightarrow_s n_s \qquad\hbox{ if  }  \| F\|_- 2^{\ell_s-4} \le 2^{n_s} \, .
\end{equation} 
If  $\| F\|_- 2^{\ell_s-4} > 2^{n_s}$ then we say $\ell_s \not\hookrightarrow_s n_s$.
For $w\in\tilde \Gamma$, we put
$$
\MM_{c,\tilde \Gamma} (\varphi) (w)=\sum_{n_s}\psi_{ n_s} ^{Op(\tilde \Gamma)} \sum_{\ell_s 
\not\hookrightarrow_s
n_s}\MM(\tilde \psi_{\ell_s} ^{Op(F(\tilde \Gamma))}(\varphi)) (w)\, .
$$
This gives \eqref{decc}. We next check
\eqref{sublemmagamma} and \eqref{sublemmagamma'}.
 
First, since $s<0$, \eqref{sublemmagamma} follows from the definition of $\hookrightarrow_s$
combined with the fact that
\begin{align*}
&\| \psi_{n_s}^{Op(\tilde\Gamma)} \tilde \varphi\|_{L_p(\mu_{\tilde \Gamma})}
\le C \|\tilde \varphi\|_{L_p(\mu_{\tilde \Gamma})}\, , 
\, \, \| \psi_{\ell_s}^{Op(F(\tilde \Gamma))} \varphi\|_{L_p(\mu_{F(\tilde\Gamma)})}
\le C \|\varphi\|_{L_p(\mu_{\tilde \Gamma})}\, , \mbox{ and }\\
& \|\MM (\tilde \varphi)\|_{L_p(\mu_{\tilde \Gamma})}
\le C \frac{\sup |f|}{\inf | \det DF|_{\tilde \Gamma}|^{1/p}} \|\tilde \varphi\|_{L_p(\mu_{F(\tilde \Gamma)})}\, ,
\end{align*}
simplifying the argument in \cite{BT1, BT2, Ba} (see
also the proof of the parallel statement on $\MM_b$ in the proof of 
Lemma~\ref{LLYU} above, in particular \eqref{111} and \eqref{hell}). 

Next, by definition of  $\not\hookrightarrow_s$, there exists  an integer $N(F,\FF)>0$
(depending on $F$ and the cones, but not $\tilde \Gamma$) such that 
if $\ell_s \not\hookrightarrow_s n_s$ then 
$$\ell_s \ge n_s -N(F,\FF)$$ 
and
\begin{equation}\label{lowerbd0}
\inf_{w\in \real^{d_s}} d(\supp(\psi_{n_s}), D(\pi_{F(\tilde \Gamma)}\circ F \circ \pi_{\tilde \Gamma}^{-1})_{w}^{tr}(\supp(\tilde \psi_{\ell_s})))
\ge 2^{\max\{n_s,\ell_s\}-2N(F,\FF)} 
\quad \mbox{.}
\end{equation}
The proof of \eqref{sublemmagamma'} is then obtained by 
$(r-1)$ integration by parts, in the sense of Appendix~\ref{partsparts}, in the kernel
$V_{n_s, \tilde \Gamma}^{\ell_s}(w,y)$, with $w, y \in \real^{d_s}$, for
$$
\psi_{ n_s} ^{Op(\tilde \Gamma)} \MM( \psi_{\ell_s} ^{Op(F(\tilde \Gamma))}(\varphi))
$$
when $\ell_s \not \hookrightarrow_s n_s$, using \eqref{lowerbd0}. 
Just like for the   estimate \eqref{starstar}
on $\MM_c$ in the proof of 
Lemma~\ref{LLYU} above, this is a simplification of the argument in \cite{BT1, BT2, Ba}, so we do not enter into details.

From now on, we fix $\Gamma$.
To deduce \eqref{sublemma'} from (\ref{sublemmagamma}--\ref{sublemmagamma'}), we shall need to couple
 wave packets in the cotangent spaces of $\real^d$ and  $\tilde \Gamma=\Gamma+x$
 for $x\in \real^d$.
(For this, it is essential that we have $(n,-)$ in the left-hand side of \eqref{sublemma'}.)
Recalling the functions $b_m$ from \eqref{eqn:bm}, we claim that
 there exists a constant $C_0 >1$ depending only
on $C_\FF$ and $\cone_\pm$ 
so that, for any $\Gamma\in \FF(\cone_+)$ and all
$n$, $n_s$, the 
kernels $V^{n,-}_{n_s, \Gamma+x}(w,y)$ defined
for $w\in \Gamma$, $x\in \real^d$, and $y\in \Gamma$  by
\begin{align*}
\nonumber &
 \FFF^{-1}(\psi_{\Theta',n,-})(-x)(\phi \cdot \psi^{Op(\Gamma+x)}_{n_s} \tilde \varphi)(w+x)
%\\&\qquad \qquad\qquad\qquad\qquad 
= \frac{1}{(2\pi)^{d+d_s}} \int_{\Gamma} V^{n,-}_{n_s, \Gamma+x}(w,y)\tilde \varphi(y+x) \, \D y \, ,
\end{align*}
satisfy\footnote{For the kernels $V^{n,+}_{n_s, \Gamma+x}(w,y)$ defined by replacing
$\psi_{\Theta',n,-}$ with
$\psi_{\Theta,n,+}$,
we only get $C_0>1$ so that
$|\int V^{n,+}_{n_s, \Gamma+x}(w,y)\, \D x |\le C_0 2^{-(r-1)n}b_{n_s}(w-y)$ if
   $C_0 2^{n_s} \ge  2^n$. In particular,  $V^{n,+}_{n_s, \Gamma+x}$ 
need not be small 
if $n$ is big and  $n_s$ small.}
\begin{equation}\label{VIF}
|\int_{\real^d} V^{n,-}_{n_s, \Gamma+x}(w,y)\, \D x |\le C_0 2^{-(r-1)n}b_{n_s}(w-y)
\mbox{ if }   C_0 2^{n_s} \le   2^n
\mbox{ or }   2^{n_s} \ge  C_0 2^n\, .
\end{equation}

To prove \eqref{VIF}, recalling \eqref{myop}, notice that $\int V^{n,-}_{n_s, \Gamma+x}(w,y)\, \D x $ is just
\begin{align*}
&\int_{x, \eta\in \real^{d},\,  \eta_s\in \real^{d_s}} \phi(w+x)
|\det D\pi_{\Gamma+x}(y)|
\E^{-\I x\eta} \cdot \E^{\I (\pi_{\Gamma+x}(w+x)-\pi_{\Gamma+x}(y+x)) \eta_s} \\
&\qquad\qquad\qquad\qquad \qquad\qquad\qquad\qquad\times \psi_{n_s}^{(d_s)}(\eta_s)
\psi_{\Theta', n,-}(\eta) \D \eta \D \eta_s \D x \, ,
\end{align*} 
and
integrate by parts (see Appendix~\ref{partsparts})
$(r-1)$ times
with respect to $x$ in the right-hand side, just like in \cite{BT1, BT2}
(see also \cite{Ba}) using the facts that $\pi_{\Gamma+x}(u+x)=\pi_\Gamma(u)+x_-$
if $x=(x_-,x_+)\in \real^{d_s}\times \real^{d_u}$, 
and that $\Gamma \in \FF$.

\smallskip
We finally conclude the proof of \eqref{sublemma'}. Let
 $n\ge m_0$ and $\varphi\in C^\infty$. Recall \eqref{neat}.
For $w\in \Gamma$
and $x\in \real^d$, decomposing $\MM\varphi =\phi \MM\varphi$ via
\eqref{decc} for $\tilde \Gamma=\Gamma+x$,  we get
\begin{align}
\nonumber  &(\FFF^{-1} \psi_{\Theta',n,-})(-x)\cdot (\MM(\varphi))(w+x)=\\
\label{ttt} &\qquad\qquad\qquad   (\FFF^{-1}\psi_{\Theta',n,-})(-x)\cdot (\phi \MM_{b,\Gamma+x}(\varphi))(w+x)\\
\label{vvv} &\qquad\qquad\qquad\quad 
+(\FFF^{-1} \psi_{\Theta',n,-})(-x) \cdot (\RR_{\tilde n_s,\Gamma+x}\circ \MM_{c,\Gamma+x})(\varphi)(w+x)\\
\label{uuu} &\qquad\qquad\qquad\quad+(\FFF^{-1} \psi_{\Theta',n,-}(-x))
\cdot (\phi-\RR_{\tilde n_s,\Gamma+x})\MM_{c,\Gamma+x}(\varphi)(w+x)\, .
\end{align}
We average over $x\in \real^d$. Then, recalling \eqref{magic5},
the $\|\cdot\|^s_{p,\Gamma}$-norm of the contribution
of \eqref{ttt} may be estimated by \eqref{sublemmagamma}. 
Also, noting that if $\tilde n_s$ is large enough (depending on $f$, $F$, $s$, and $\FF$, but
not on $\Gamma$), then 
$$
 C_{F,f,\delta} 2^{-(r-1-\delta-|s|) \tilde n_s} \le C \sup|f| \frac{ \|F\|_-^s}
 {\inf |\det (DF|_{(\cone'_+)^\perp})|^{1/p}} \, ,
$$
 the $\|\cdot\|^s_{p,\Gamma}$-norm of the contribution of
the last term \eqref{uuu} may be controlled 
by \eqref{sublemmagamma'}.

It only remains to control the contribution of \eqref{vvv}.
Since we may combine \eqref{sublemma} with \eqref{sublemmagamma} to show that
there exists a constant $\tilde C_1$ depending only on $f$, $F$ and $\FF$ (but not
on $\Gamma$ or $x$)
$$\|\MM_{c,\Gamma+x}\varphi\|^s_{p,\Gamma+x}=\|\MM \varphi-\MM_{b,\Gamma+x}\varphi\|^s_{p,\Gamma+x}
\le \tilde C_1 \|\varphi\|^s_{p,F(\Gamma+x)}\, , $$ 
it suffices to establish, setting $\tilde \varphi=\MM_{c,\Gamma+x}(\varphi)$,  that
there exists a constant $C_1$ depending only on $F$ and $\FF$ (but not
on $\Gamma$)  so that
for any fixed  $ \tilde n_s$, if
$m_0$ is
large enough, then for any $n\ge m_0$
\begin{equation}\label{whew}
\|\int \FFF^{-1}(\psi_{\Theta',n,-})(-x)\cdot (\RR_{\tilde n_s, \Gamma+x} )(\tilde \varphi)(\cdot+x) \D x  \|^s_{p,\Gamma+x}
\le \sup_x C_1 2^{-(r-1)m_0} \|\tilde \varphi\|^s_{p, \Gamma+x} \, .
\end{equation}
Taking $2^{m_0} > C_0 2^{\tilde n_s}$, the bound \eqref{VIF} gives \eqref{whew}.
\end{proof}

We end with a remark on the kneading operator approach:

\begin{remark}[Nuclear power decomposition]\label{nucc}Using approximation numbers
\eqref{ak}  as
in \cite{BT2} and \cite{Ba},
  Lemma~\ref{finiterankU}   should imply, not only compactness of $\MM_c$, but also that
 there exists an integer $D\ge 2$
(depending only on $r$, $s$, $t$, and $d$) so that $\MM_c^D$ is nuclear.
(This is the desired ``nuclear power decomposition.'')
Also, we expect that  
(adapting the arguments of \cite{BT2, Ba})  
for any $0<\kappa<1$ there exists $C_\kappa>1$ so that the 
flat trace \cite{BT2} of the term $\MM_b$ for the operator $\MM$  associated
to $T^{-n}$ and $g^{(-n)}$ is smaller than  $C_\kappa \kappa^n$. 
\end{remark}

%%%%%%%%%%%%%%%%%%%%%%%%%%%%%%%%%%

\appendix

\section{Characteristic functions are not  bounded multipliers on the ``microlocal''
anisotropic spaces from \S\ref{cotang}}
\label{nomult}

For simplicity, we only consider
the scale $W^{\Theta,t,0}_{2,\dagger}$ in dimension $d=2$ for $t>0$, and ignore the charts completely, but
 the  argument
extends to all spaces $W^{\Theta,t,s}_{p,\dagger}$, to  the other spaces in \cite{BT1} and
 \cite{BT2} (if $s<0$ or $t>0$), and to the spaces introduced by Faure--Roy--Sj\"ostrand
\cite{FRS} and\footnote{\label{maybe}As we were finishing this paper, F. Faure and M. Tsujii 
\cite{FaTsm} announced a new version of microlocal anisotropic spaces for
which the wave front set is more narrowly constrained. The counter-example
in this appendix may fail for these new spaces.} their variants. 
 We shall outline the proof\footnote{We take  $F$ linear and a domain given by a half-plane for simplicity, the general case is similar.} of the following claim:

\begin{proposition}[Gou\"ezel \cite{Go0}]\label{GGG}
Let $1_E$ be the caracteristic function of a half-plane
$E$ in $\real^2$.
Let  $F$ be a linear transformation of $\real^2$ fixing two lines $D_+$ and $D_-$. Let 
$\widetilde \Theta$ and $\Theta'$ be two cone systems so that
the corresponding cones $\cone'_-$,  
$\tilde \cone_-$, and $\tilde \cone_+$, $\cone'_+$ in $\real^2$ are centered on $D_-$ and $D_+$, respectively. Then
for any $t>0$,  the operator $\EE (\varphi) = 1_E \cdot (\varphi \circ F)$ does not map
$W^{\widetilde \Theta, 2t,0}_{2,\dagger}$ into $W^{\Theta',2t,0}_{2,\dagger}$.
\end{proposition}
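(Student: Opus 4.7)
My plan is to construct an explicit family of wave packets $\varphi_N$, frequency-concentrated near the stable direction $D_-$ and spatially squeezed in the unstable direction, for which the ratio $\|\EE\varphi_N\|/\|\varphi_N\|$ blows up as $N\to\infty$. First I would choose coordinates in which $D_-$ is the $\xi_1$-axis and $D_+$ the $\xi_2$-axis (orthogonal by the adapted-metric convention), take $E=\{x_2>0\}$ so that $\partial E = D_-$, and observe that $F$, being linear and preserving each $D_\pm$, acts diagonally in these coordinates. Hence $\varphi_N\circ F$ has the same form as $\varphi_N$ up to bounded rescalings of parameters, and it suffices to analyse the multiplier $\varphi\mapsto 1_E\varphi$.

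The family I would use is, for fixed bumps $\eta_1,\eta_2\in C_c^\infty(\real)$ with $\eta_2(0)\ne 0$, a small $\epsilon>0$ (to be taken smaller than the aperture of $\tilde\cone_-$), and $M:=\epsilon N$:
\[
\varphi_N(x) = \eta_1(x_1)\,\eta_2(Mx_2)\,\E^{\I N x_1}\, , \qquad
\hat\varphi_N(\xi) = \hat\eta_1(\xi_1-N)\cdot M^{-1}\hat\eta_2(\xi_2/M).
\]
Up to Schwartz tails, $\hat\varphi_N$ is concentrated in the rectangle $\{|\xi_1-N|\lesssim 1,\ |\xi_2|\lesssim M\}$, which sits inside $\tilde\cone_-$ once $\epsilon$ is small. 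Therefore $\Psi_{2t,\tilde\Theta_+}$ only catches a rapidly decaying tail, and by Plancherel
\[
\|\varphi_N\|_{W^{\tilde\Theta,2t,0}_{2,\dagger}} \;\lesssim\; \|\hat\varphi_N\|_{L_2} \;\sim\; M^{-1/2} \;=\; (\epsilon N)^{-1/2}.
\]

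For the target norm, I would factor $\widehat{1_E\varphi_N}(\xi) = \hat\eta_1(\xi_1-N)\cdot M^{-1}\widehat{\eta_2\cdot 1_{(0,\infty)}}(\xi_2/M)$ and use $\widehat{\eta_2\cdot 1_{(0,\infty)}}(\tau)=\I\eta_2(0)/\tau+O(\tau^{-2})$ as $|\tau|\to\infty$. On $\cone'_+\cap\{|\xi_1-N|\lesssim 1\}$, the cone condition forces $|\xi_2|\ge N/c\gg M$, so $|\widehat{1_E\varphi_N}(\xi)|^2\gtrsim |\hat\eta_1(\xi_1-N)|^2\,|\eta_2(0)|^2\,\xi_2^{-2}$. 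Consequently
\[
\|\Psi_{2t,\Theta'_+}\widehat{1_E\varphi_N}\|_{L_2}^2 \;\gtrsim\; \|\hat\eta_1\|_{L_2}^2\,|\eta_2(0)|^2\int_{|\xi_2|\ge N/c}\xi_2^{4t-2}\,d\xi_2.
\]
When $t\ge 1/4$ this integral already diverges at $+\infty$, so $\EE\varphi_N\notin W^{\Theta',2t,0}_{2,\dagger}$ and $\EE$ fails to map into the target at all. When $0<t<1/4$ the integral equals $\sim N^{4t-1}$, so the target norm is $\gtrsim N^{2t-1/2}$ and
\[
\frac{\|\EE\varphi_N\|_{W^{\Theta',2t,0}_{2,\dagger}}}{\|\varphi_N\|_{W^{\tilde\Theta,2t,0}_{2,\dagger}}} \;\gtrsim\; \frac{N^{2t-1/2}}{N^{-1/2}} \;=\; N^{2t} \;\longrightarrow\; \infty
\]
as $N\to\infty$, for every $t>0$.

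The main obstacle I anticipate is getting the scaling of the transverse squeeze $M=\epsilon N$ right. With a naive fixed bump ($M\equiv 1$), the source norm stays $O(1)$ while the target norm grows only as $N^{2t-1/2}$, giving a counterexample only for $t>1/4$. Shrinking the packet in $x_2$ on scale $1/M$ beats down the source norm by the Plancherel factor $M^{-1/2}$ (because $\hat\varphi_N$ spreads in $\xi_2$ over scale $M$), while leaving the target estimate untouched, since $\cone'_+$ only sees frequencies $|\xi_2|\ge N/c$ that remain well above the threshold $M$ once $\epsilon<1/c$. The only remaining checks are that the squeezed Fourier rectangle genuinely lies in $\tilde\cone_-$ (which pins down the allowable $\epsilon$) and that $\varphi_N\circ F$ keeps the same form (automatic, since $F$ is diagonal in the $(D_-,D_+)$ basis); both are routine.
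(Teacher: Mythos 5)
Your wave-packet construction is a legitimate alternative to the paper's explicit-function approach, and it does work as written in the range $t\ge 1/4$. But the crucial small-$t$ case, which is exactly what the scaling $M=\epsilon N$ was supposed to handle, contains a genuine gap: the estimate $\|\varphi_N\|_{W^{\widetilde\Theta,2t,0}_{2,\dagger}}\lesssim M^{-1/2}$ is false. The Schwartz tail of $\hat\eta_2$ is not ``rapidly decaying in $N$'' — it decays only in the ratio $|\xi_2|/M$. With $M$ proportional to $N$, the tail that lands on $\tilde\cone_+$ (i.e.\ $|\xi_2|\gtrsim N$) is a factor $\sim(M/N)^k=\epsilon^k$, a constant in $N$. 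Concretely, the $\Theta_+$ half of the source norm satisfies
\[
\|\Psi_{2t,\tilde\Theta_+}^{Op}\varphi_N\|_{L_2}^2
\;\sim\; \|\hat\eta_1\|_{L_2}^2 \int_{|\xi_2|\gtrsim N} \xi_2^{4t}\,M^{-2}\,|\hat\eta_2(\xi_2/M)|^2\,\D\xi_2
\;=\; \|\hat\eta_1\|_{L_2}^2\, M^{4t-1}\int_{|\tau|\gtrsim N/M}\tau^{4t}|\hat\eta_2(\tau)|^2\,\D\tau ,
\]
and with $M=\epsilon N$ the lower limit $N/M=1/\epsilon$ is a constant in $N$, so this is $\sim C(\epsilon)\,N^{4t-1}$ with $C(\epsilon)>0$. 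That is the \emph{same} power of $N$ as your target estimate $\gtrsim N^{4t-1}$. The norm ratio is therefore bounded in $N$ (its size is $\sim C(\epsilon)^{-1/2}$, large only as $\epsilon\to 0$), and the claimed blow-up $\gtrsim N^{2t}$ does not occur.

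The choice $M=\epsilon N$ is exactly the borderline scaling where the Fourier support of the packet already reaches the cone boundary, so the tail no longer decays in $N$. Two fixes: (i) replace $M=\epsilon N$ by $M=N^\gamma$ with $\max(0,1-4t)<\gamma<1$; then $N/M=N^{1-\gamma}\to\infty$, the $\Theta_+$ contribution decays like $N^{-k(1-\gamma)+O(1)}$ for any $k$, and the ratio is $\gtrsim N^{(4t-1+\gamma)/2}\to\infty$. (ii) Take $\hat\eta_1,\hat\eta_2$ with \emph{compact support} (rather than $\eta_1,\eta_2\in C_c^\infty$), so that $\hat\varphi_N$ is supported exactly in the rectangle and the $\Theta_+$ term vanishes identically for large $N$. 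Option (ii) mirrors what the paper does: there $\hat\varphi=\mathbf{1}_{\cone_-}\phi(\xi_2)$ is sharply cut off to $\cone_-$, so the source norm reduces trivially to the $L_2$ norm, avoiding the tail issue altogether.

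A secondary point: the proposition is stated for an arbitrary half-plane $E$, and the relevant geometry genuinely changes with the orientation of $\partial E$ relative to the cones. You only analyse the case $\partial E$ along $D_-$ (the paper's ``first case'' after swapping axis labels). The cases $\partial E$ along $D_+$ and $\partial E$ outside both cones require different computations (the paper gets a logarithmic convolution integral in the second case and a bounded transfer in the third); your packet construction would need to be adapted and re-verified there.
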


The basic idea is that,  in Fourier transform, multiplication by a Heaviside function 
becomes (essentially) a convolution with $(\I \xi)^{-1}$ , and  such a convolution may transform a function
with square integrable Fourier transform supported in  $\tilde \cone_-$, into a function with Fourier transform decaying slower than
any $\xi^{-t}$ in $\cone'_+$.
(The main issue is that the support of the
Fourier transform  ``leaks'' from $\tilde\cone_-$ into $\cone'_+$, due to convolution with $(\I \xi)^{-1}$.
This creates similar problems for the spaces introduced by Faure--Roy--Sj\"ostrand  \cite{FRS}.)

\begin{proof}[Sketch of the proof of Proposition~\ref{GGG}]
We claim that it is enough to show that the operator of multiplication by
$1_E$ does not map $W^{\Theta,2t,0}_{2,\dagger}$ into $W^{\Theta', 2t,0}_{2,\dagger}$ for any quadruple
of cones in $\real^2$, centered on $D_-$ and $D_+$. Indeed,
denote by $\MM_F$ the operator mapping $\varphi$ to
$\varphi \circ F$.
 If $F$
maps $\tilde \cone_-$ and $\tilde \cone_+$ into cones respectively included in
$\cone_-$  and containing $\cone_+$, then  $\MM_{F}$  maps
 $W^{\Theta,2t,0}_{2,\dagger}$  continuously into $W^{\widetilde \Theta, 2t,0}_{2,\dagger}$.  Assume by contradiction that $\EE$
maps $W^{\widetilde \Theta, 2t,0}_{2,\dagger}$ into $W^{\Theta', 2t,0}_{2,\dagger}$. Then, precomposing with  
$\MM_{F^{-1}}$ 
we would get that 
$\varphi \mapsto 1_E \cdot \varphi$ maps $W^{\Theta, 2t,0}_{2,\dagger}$ into $W^{\Theta', 2t,0}_{2,\dagger}$,  
contradicting our assumption and proving the claim. 

\smallskip

From now on, we focus on the operator of multiplication by $1_E$.
In order to compute the Fourier transform  $\FFF(1_E \varphi)$, we  compute the
Fourier transform of $1_E$.
As a starting point, let
$\tilde \chi=\mathbf{1}_{[0,\infty)}$ in dimension $1$. Then
$\tilde \chi'=\delta_0$ the Dirac mass at $0$. Thus, since the Fourier transform of
the Dirac mass is the constant function equal to $1$, we have, formally
  \begin{equation}\label{dim1}
  \FFF(\tilde \chi)(\xi)=\frac{1}{\I\xi}
  \FFF(\tilde \chi')(\xi)=\frac{1}{\I\xi}\FFF(\delta_0)
  =\frac{1}{\I\xi} \, .
  \end{equation}
(In fact, 
$\FFF(\tilde \chi)$ is the distribution obtained by summing a Dirac mass at
$0$ and the ``principal value of
$1/\xi$,'', but it will be sufficient to work with the
approximation above.)

Let now $1_E$ be the characteristic function of a half-plane $E$ bounded by a line through the
origin (we can reduce to this case by translation)
directed by a unit vector $v$.  The function $1_E$ restricted to any line orthogonal to
 $v$ is just the characteristic function of a half-line.
Since $\FFF( 1_E \varphi)=\FFF(1_E) * \FFF(\varphi )$,  we have for any $\xi\in 
\real^2$,
  \begin{equation}\label{convoll}
  \FFF(1_E \varphi) (\xi)\sim
 \int_{\omega\in \real} \frac{(\FFF \varphi)(\xi+\omega w)}{\omega} \D\omega
+ (\FFF \varphi)(\xi) \, ,
  \end{equation}
where $w$ is the unit vector orthogonal to $v$ pointing
towards the interior of the half-plane.
(The symbol $\sim$ above means that we neglect  
 unimportant factors
such as $\I$.)

There are three main cases to consider, depending on the position of the boundary of the half-plane
with respect to  the cones: In the interior of $\cone_+$, in the interior of  $\cone_-$, or in the complement of their union.
(The remaining case when the boundary of the half-plane lies on the boundary of a cone is
similar.) We discuss each case by considering concrete examples of lines $D_+$ and $D_-$. The general situation may be handled
by analogous arguments.

For the first case, we take $\cone_-$ around the vertical axis, 
$\cone_+$ around the horizontal axis, and a left half-plane with  vertical boundary through the origin,  $w=(-1,0)$. (The boundary of the half-plane thus lies inside $\cone_-$.) Let $\varphi\in L_2$ be so that
$\hat \varphi:=\FFF(\varphi)\in L_2$ is supported in $\cone_-$.
In view of \eqref{convoll}, the Fourier transform of $\psi=1_E \varphi$ is given by the following
convolution (modulo
trivial correcting factors and terms)
  \begin{equation}
  \hat \psi(\xi_1,\xi_2)=
\int_{\omega\in \real} \frac{\hat \varphi(\xi_1-\omega,\xi_2)}{\omega}\D \omega \, .
  \end{equation}
We now construct $\varphi\in L_2$ (this implies $\varphi \in W^{\Theta,2t,0}_{2,\dagger}$) so that $\int_{\cone'_+}|\hat \psi|^2
(1+|\xi|^2)^t = +\infty$ for all $t>0$, implying that $1_E \varphi \notin W^{\Theta',2t,0}_{2,\dagger})$. For this, take $\varphi$ so that
$$\hat \varphi(\xi_1,\xi_2)=\mathbf{1}_{\cone_-} \phi(\xi_2)\, ,$$ 
with $\phi(\xi_2)>0$ if $\xi_2\ge
2$, and $\phi(\xi_2)=0$ if $\xi_2<2$, assuming also
  \begin{equation}
  \label{L2phi}
  \int_{\cone_-} |\hat \varphi|^2 \D\xi = \int_{\xi_2\geq 2}\xi_2  \phi(\xi_2)^2 \D\xi_2<\infty \, .
  \end{equation}
Then, it is easy to see that for  $(\xi_1,\xi_2)$ in  $\cone'_+$,
  \begin{equation}
  \label{ex1}
 \hat \psi(\xi_1,\xi_2) \sim \phi(\xi_2) \frac{|\xi_2|}{ |\xi_1|},
  \end{equation}
where $|\xi_2|$ corresponds  to the width of $\cone_-$  at height
 $\xi_2$, and $|\xi_1|^{-1}$ comes from the factor $1/\omega$ in
the formula for  $\hat \psi$. Therefore,
  \begin{align*}
  \int_{\cone'_+} |\hat \psi(\xi)|^2 (1+|\xi|^2)^t &\D\xi 
  \sim \int_{\xi_1>2} \int_{2\leq \xi_2\leq c'\xi_1 } \phi(\xi_2)^2 
\frac{\xi_2^2}{  \xi_1^2} \xi_1^{2t} \D\xi_1\D\xi_2
  \\&
  \sim \int_{\xi_2>2} \left(\int_{\xi_1\geq \xi_2}
  \xi_1^{2t-2}\D\xi_1\right)\xi_2^2 \phi(\xi_2)^2 \D\xi_2\\&
  \sim \int_{\xi_2>2}\xi_2^{2t-1} \xi_2^2 \phi(\xi_2)^2 \D\xi_2
  \sim \int_{\xi_2 \ge 2} \xi_2^{1+2t} \phi(\xi_2)^2 \D\xi_2 \, .
  \end{align*}
If $t>0$, it is easy to find $\phi$ so that
\eqref{L2phi} holds but the integral above is
infinite. (This cannot be achieved when  $t=0$,  reflecting the fact that
multiplication by 
$1_E$ leaves $L_2$ invariant.)

For the second case, we keep the same cones, but now
take the upper half-plane bounded by the horizontal axis through zero (i.e., $w=(0,1)$, and
the boundary of the half-plane  lies inside $\cone_+$).
Then, taking the same $\varphi$, we have for 
$(\xi_1,\xi_2)\in \cone'_+$,
  \begin{equation}
  \label{ex2}
 \hat \psi(\xi_1,\xi_2) = \int_{ \omega\in 
\real} \frac{\phi(\xi_1,\xi_2+\omega)}{\omega}\D \omega
  \sim \int_{\omega \ge c\xi_1} \frac{\phi(\omega)}{\omega} \D \omega  \, .
  \end{equation}
Then, for suitable $c>0$ and $c'>0$,
  \begin{align*}
  \int_{\cone'_+} |\hat \psi(\xi)|^2 (1+|\xi|^2)^t \D\xi
  &\sim \int_{\xi_1>2} \int_{|\xi_2|\leq c'\xi_1}
  \left(\int_{\omega\geq c\xi_1} \frac{\phi(\omega)}{\omega} \D \omega\right)^2 \xi_1^{2t}
  \D\xi_1\D\xi_2\\
 & \sim \int_{\xi_1>2}
  \left(\int_{\omega\geq c\xi_1} \frac{\phi(\omega)}{\omega} \D \omega\right)^2 \xi_1^{1+2t}
  \D\xi_1 \, .
  \end{align*}
Take $\phi(\xi_2)=1/(\xi_2\log\xi_2)$. Then 
\eqref{L2phi} holds but
  \begin{align*}
  \int_{\cone'_+} |\psi(\xi)|^2 (1+|\xi|^2)^t &\D \xi\sim 
\int_{\xi_1>2} \left( \int_{\omega\geq c\xi_1} \frac{1}{\omega^2 \log t} \D \omega\right)^2 |\xi_1|^{1+2t} \D\xi_1\\
 & \sim \int_{\xi_1>2} \left(\frac{1}{\xi_1 \log \xi_1}\right)^2
  |\xi_1|^{1+2t} \D\xi_1
  \sim \int_{\xi_1>2} \frac{|\xi_1|^{2t}}{ |\xi_1| (\log \xi_1)^2}\D\xi_1\, .
  \end{align*}
The above integral is infinite for $t>0$, as claimed.
(Like in the first example, the integral converges for $t=0$.)

Finally, for the third case, we consider  $\cone_-=\{ -\xi_2\leq \xi_1 \leq
-\xi_2/2\}$ and $\cone'_+=\{ \xi_2/2 \leq \xi_1 \leq \xi_2\}$, taking
 the left half-plane with  vertical boundary through the origin ($w=(-1,0)$, like in the first case, but
the boundary now lies in the complement of the union of the two cones).
We take $\hat \varphi$ as above. Then
$\hat \varphi \in L_2$ if and only if
  \begin{equation}
  \label{thirdone}
  \int_{\cone_-} |\hat \varphi|^2 \D\xi \sim \int_{\xi_2>2}  \xi_2 \phi(\xi_2)^2  \D\xi_2<\infty \, .
  \end{equation}
(This condition is the same as \eqref{L2phi} modulo a constant factor due to the new cone.)
 Using \eqref{convoll} again, the Fourier transform of $\psi=1_E\varphi$ on $\cone'_+$ is given by (modulo
trivial corrections)
  \begin{equation}
  \hat \psi(\xi_1,\xi_2)\sim
\int_{\omega\in  \real} \frac{\hat \varphi(\xi_1-\omega,\xi_2)}{\omega}\D \omega
  \sim \phi(\xi_2)\, ,
  \end{equation}
where we used that $1\le|\xi_2|/|\xi_1|\le 2$ on $\cone'_+$.
Therefore,
  \begin{equation}
  \int_{\cone'_+} |\hat \psi(\xi)|^2 (1+|\xi|^2)^t \D\xi \sim \int_{\xi_2>2}
  \phi(\xi_2)^2 \xi_2^{1+2t}\D\xi_2 \, .
  \end{equation}
If $t>0$ it is easy to find $\phi$ satisfying \eqref{thirdone} so that the integral above diverges. 
\end{proof}

%%%%%%%%%%%%%%%%%%%%%%%
\section{Heuristic comparison of $\UU^{t,s}_1$ and the Gou\"ezel--Liverani spaces}
\label{willbeneeded}

In this appendix, we  discuss informally the relation between 
 $\UU^{t,s}_{p}$ when $p=1$
and the geometric  spaces of
Gou\"ezel--Liverani \cite{GL1}. (We do not claim that the norms are equivalent.)

For $s\in\real$ and $1\le p, q\le \infty$, let $B_{p,q}^s(\real^{d_s})$ be the classical Besov space \cite{RS}
on $\real^{d_s}$. 
We  introduce the local version of a new space $\widetilde \UU^{t,s}_{p,q}$:

\begin{definition}[The local space $\widetilde \UU^{\cone_\pm,t,s}_{p,q}(K)$]
Let $K\subset \real^d$ be a non-empty compact set. 
For a cone  pair $\cone_\pm=( \cone_+,\cone_-)$, so that $\real^{d_s}
\times \{0\}$ is included in $\cone_-$,
real numbers $1\le p<\infty$, $1\le q\le \infty$,   $s\le 0$, 
and  integer $t\ge 1$, we set
\begin{align}\label{def:normOntildeRU}
\|\varphi\|_{\widetilde \UU^{\cone_\pm,t,s}_{p,q}}&=
\sup_{\Gamma \in \FF(\cone_+)}
\biggl ( \sum_{|\vec t|\le t-1}  
 \| [D^{\vec t}  \varphi]\circ \pi_\Gamma^{-1}  \|_{B_{p,q}^{s+|\vec t|}(\real^{d_s})}\\
\nonumber
&\quad +\sup_{h\in \real^{d_s}, h \ne 0}
\bigl \| \frac{  ((D^t \varphi) \circ \pi_\Gamma^{-1})(\cdot+h) -(\varphi \circ \pi_\Gamma^{-1})(\cdot)}{|h|}\bigr  \|_{B_{p,q}^{s}(\real^{d_s})}\biggr 
 ) \, .
\end{align}
\end{definition}

The  space $\widetilde \UU^{t,s}_{p,q}(T)$ is then defined  using 
admissible charts (like in Definition~\ref{defnormU}).

\medskip

We claim that
if $s< -t$, the spaces $\widetilde \UU^{t,s}_{1,1}$  are heuristically  similar both to the spaces 
$\BB^{t,|s+t|}$ of Gou\"ezel--Liverani
\cite{GL1,GL2}  and to our spaces $\UU^{t,s}_{1}$.  Indeed,   as noticed above,
the dual of the little Besov space $b^{|s+t|}_{p/(p-1), q/(q-1)}(\real^{d_s})$
is the Besov space $B^{s+t}_{p,q}(\real^{d_s})$ appearing in the definition of  $\widetilde \UU^{t,s}_{p,q}$
(see \cite[2.1.5 Remark 1]{RS}).
Taking $p=1$ and $q=1$ we find
the dual of the little Besov space $b^{|s+t|}_{\infty,\infty}(\real^{d_s})$,
which is similar to the strong stable norm of Gou\"ezel and Liverani.
 So $\widetilde \UU^{t,s}_{1,1}$  is related to the space
$\BB^{t,|s+t|}$ of Gou\"ezel--Liverani. 
(We abusively disregard here the fact that Gou\"ezel--Liverani
take the sum over all $|\vec t|\le t$ while we use the Lipschitz quotient
for the last derivative, recalling that
we are taking the closure of $C^{\infty}(K)$, as well as Footnote~\ref{fuut}.)
Since $t\ge 1$ is an integer, in view of the Paley--Littlewood decomposition \cite[Prop 2.1(vi)]{RS}
(see also\footnote{\label{fuut}To make this rigorous we would need to replace $L_p(\real^d)$  in the arguments therein by
 mixed Lebesgue norms \cite{BIN}.} 
\cite[2.3.5, 2.5.7]{Trie0}) 
of Besov-Lipschitz spaces $\Lambda^t_{p,q}=B^t_{p,q}$ for  $p<\infty$ and $t>0$, the spaces
 $\UU^{\cone_\pm, t,s}_{1}$ and $\widetilde \UU^{\cone_\pm,t,s}_{1,1}$ are similar.
(We explained in Remark~\ref{qqq} why we took $q=\infty$
instead of $q=1$ and why we expect our spaces would have the same qualitiative
and quantitative features for $q=1$.)

\medskip
\begin{remark}[The Demers--Liverani--Zhang spaces]\label{DLZ}
It is more difficult to compare our spaces $\UU^{t,s}_p$ to the spaces
of Demers--Liverani \cite{DL} (even heuristically) for $p>1$ and
$-1+1/p<s<-t<0<t<1/p$. The main problem is that their stable norm roughly involves
the dual of the little Besov space
$b^{v}_{1/\alpha,\infty}$ (abusively considering
$|\Gamma|^{\alpha} \|\varphi\|_{b^{v}_{\infty,\infty}}\simeq \|\varphi\|_{b^{v}_{1/\alpha,\infty}}$) 
while the unstable norm involves\footnote{In the spaces of Demers--Zhang \cite{DZ}
it is the dual of
$b^{w}_{\infty,\infty}$ for $w\ne v$.} the dual of
$b^{1}_{\infty,\infty}$. It follows that, although 
one should set $t=\beta$,   one cannot assign a value to $s$ and $p$ depending on
their parameters $\alpha$, $\beta$, $q$.
(Note however that setting $p=1/\alpha$ we recover the condition $\beta \le \alpha$
from \cite{DL} while their condition $\alpha \le 1-q$ is reminiscent
of $s> -1+1/p$ if in addition $s=-q$.)
This  also explains why one cannot immediately compare our Lasota--Yorke
estimates \eqref{line1} with \cite[Prop. 2.7]{DL}.
\end{remark}
%%%%%

\section{Integration by parts and proof of Lemma~\ref{lm:CsU}}
\label{partsparts}

For the convenience of the reader, we recall what is meant by integration by parts in
the present  context (see e.g. \cite{BG1}).

{\bf Integration by parts.}
Let $\Phi:\real^d \to \real$ be $C^2$ and
let $f:\real^d\to \real$ be $C^1$ and  compactly supported,  with
$\sum_{j=1}^{d}(\partial_j \Phi)^2\ne 0$
in the support of $f$, and consider the average  $\int_{\real^d} \E^{\I \Phi(w)}f(w) \D w$.
By ``integration by parts on $w$,'' we  mean application, 
for a  $C^2$ function $\Phi:\real^d \to \real$ and a compactly supported $C^1$ function $f:\real^d\to \real$ with
$\sum_{j=1}^{d}(\partial_j \Phi)^2\ne 0$
in the support of $f$,
of the identity
\begin{align*}
 \int \E^{\I \Phi(w)}f(w) \D w
&=
-\sum_{k=1}^d 
\int \I(\partial_k \Phi(w))\E^{\I \Phi(w)}\cdot 
\frac{\I(\partial_k \Phi(w))\cdot f(w)}
{\sum_{j=1}^{d}(\partial_j \Phi(w))^2} \D w
\\
\nonumber &=
\I\cdot \int \E^{\I \Phi(w)}\cdot \sum_{k=1}^d \partial_k\left(\frac
{\partial_k \Phi(w)\cdot  f(w)}
{\sum_{j=1}^{d}(\partial_j \Phi(w))^2}\right) \D w \, ,
\end{align*}
where $w=(w_k)_{k=1}^{d}\in \real^d$, and $\partial_k$ denotes partial differentiation 
with respect to $w_k$.

\smallskip
{\bf Regularised integration by parts.}
If $\Phi$ is $C^ r$ for some $r>1$, we
can only integrate by parts $[r]-1$ times in the above sense, even
if $f$ is $C^r$ and compactly supported. 
If $r$ is not an integer, then  to integrate by parts
$r-1$ times, we proceed as follows:
If  $\Phi:\real^d\to \real$ is $C^{1+\delta}$
 and  $f:\real^d\to \real$ is compactly supported and $C^\delta$,
for $\delta\in (0,1)$, and $\sum_{j=1}^{d}(\partial_j \Phi)^2\ne 0$ on $\supp(f)$,
we  set, for
$k=1,\ldots, d$
\begin{equation*}
h_k:=
\frac{\I(\partial_k \Phi(w))\cdot f(w)}
{\sum_{j=1}^{d}(\partial_j \Phi(w))^2} \, .
\end{equation*}
Each $h_k$ belongs to $C^\delta_0(\real^d)$.
Let $h_{k,\epsilon}$, for small $\epsilon >0$, be the convolution of
$h_k$ with $\epsilon^{-d} \upsilon(x/\epsilon)$, where 
the $C^\infty$ function
$\upsilon :\real^d \to \real_+$ is supported in the unit
ball and satisfies $\int \upsilon(x)\D x=1$. 
There is $C$, independent of $\Phi$ and $f$,
so that for each small $\epsilon>0$ and all $k$,
$
\| \partial_k h_{k,\epsilon} \|_{L_\infty}
\le C \|h_k\|_{C^\delta} \epsilon^{\delta-1} 
$ 
and
$\|  h_k - h_{k,\epsilon} \|_{L_\infty}
\le C \|h_k\|_{C^\delta}  \epsilon^{\delta} 
$.
Finally, for every real number $\Lambda \ge 1$
\begin{align*}
& \int \E^{\I \Lambda \Phi(w)}f(w) \D w
=-\sum_{k=1}^d \int \I\partial_k \Phi(w) \E^{\I \Lambda \Phi(w)}\cdot  h_k(w) \D w\\
\nonumber &\quad = \int \frac{\E^{\I \Lambda \Phi(w)}}
{\Lambda} \cdot \sum_{k=1}^d \partial_k h_{k,\epsilon} (w) \D w 
%\\\nonumber &\quad
-\sum_{k=1}^d 
\int \I\partial_k \Phi(w) \E^{\I \Lambda \Phi(w)}\cdot 
(h_k(w)-h_{k,\epsilon} (w)) \D w
\, .
\end{align*}

\bigskip

To conclude, we give the
Proof of Lemma~\ref{lm:CsU}, which relies on the following standard
result (see e.g. {\cite[{Lemma 4.1}]{BT1}} for a similar statement):

\begin{lemma}[Paley--Littlewood proper support]\label{lm:decay}
Let $K\subset \real^d$ be  compact, and let $1<p\le \infty$. 
For any  $P>0$, $Q>0$, and $\epsilon>0$, there exists a constant $C>0$ such that 
\begin{equation}\label{eqn:decay}
|\psi_{n}^{Op} \varphi(x)|\le C\cdot \frac{\sum_{\ell} 2^{-P\max\{n,\ell\}} \|\psi_{\ell}^{Op}\varphi\|_{L_{p}}}{d(x,\supp(\varphi))^Q}
\end{equation}
for any $n \in \integer_+$, $\varphi\in C^{\infty}(K)$, and 
all $x\in \real^d$ so that $d(x,\supp(\varphi))>\epsilon$. 
\end{lemma}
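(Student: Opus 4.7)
The plan is to combine the Paley--Littlewood decomposition of $\varphi$ with repeated integration by parts in Fourier space, exploiting the compact support of $\varphi\in C^\infty(K)$. First I would expand $\varphi=\sum_\ell\psi_\ell^{Op}\varphi$ and apply $\psi_n^{Op}$ termwise. Since $\supp\psi_n\cap\supp\psi_\ell=\emptyset$ whenever $|\ell-n|\ge 3$ (the supports being disjoint dyadic annuli), the Fourier multiplier $\psi_n\psi_\ell$ vanishes in that range, so at most five terms survive:
\[
\psi_n^{Op}\varphi(x)=\sum_{|\ell-n|\le 2}(\psi_n\psi_\ell)^{Op}\varphi(x)=\sum_{|\ell-n|\le 2}\int K_{n,\ell}(x-y)\,\varphi(y)\,dy,\qquad K_{n,\ell}:=\FFF^{-1}(\psi_n\psi_\ell).
\]

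Second, I would obtain a pointwise bound on $K_{n,\ell}$ by integrating by parts $M$ times in $\xi$ in the Fourier representation of the kernel, using the regularised procedure of Appendix~\ref{partsparts} when $M$ exceeds the smoothness of the symbol. Combining \eqref{betagood} with $|\supp(\psi_n\psi_\ell)|\le C\cdot 2^{nd}$ (valid for $|\ell-n|\le 2$) gives
\[
|K_{n,\ell}(z)|\le C_M\,2^{n(d-M)}\,|z|^{-M}\qquad\text{for all }z\ne 0\text{ and all }M\ge 0.
\]
Since $|x-y|\ge R:=d(x,\supp(\varphi))\ge\epsilon$ for $y\in\supp(\varphi)$, H\"older on the compact set $\supp(\varphi)\subset K$ then yields $|(\psi_n\psi_\ell)^{Op}\varphi(x)|\le C_{M,K}\,2^{n(d-M)}R^{-M}\|\varphi\|_{L_p}$, and for $M=\max\{d+P,Q+d/p'\}$ (absorbing the $\epsilon$-dependence using $R\ge\epsilon$) this produces the crude estimate $C\,2^{-Pn}R^{-Q}\|\varphi\|_{L_p}$.

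To sharpen this into the $\max\{n,\ell\}$-weighted sum in the statement, I would refine each of the five surviving terms by splitting $\varphi_\ell:=\psi_\ell^{Op}\varphi=\rho\varphi_\ell+(1-\rho)\varphi_\ell$, where $\rho$ is a smooth cutoff equal to $1$ on $\supp(\varphi)+B(0,R/4)$ and $0$ outside $\supp(\varphi)+B(0,R/2)$. On the near part $\rho\varphi_\ell$, the fact that $|x-z|\ge R/2$ on $\supp(\rho\varphi_\ell)$, combined with H\"older on its $R$-scale support, produces a bound by $C\,2^{-Pn}R^{-Q}\|\psi_\ell^{Op}\varphi\|_{L_p}$. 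On the far part $(1-\rho)\varphi_\ell$, the variable $z$ lies at distance $\ge R/4$ from $\supp(\varphi)$, so expanding $\varphi_\ell=\check\psi_\ell*\varphi$ and applying Schwartz decay to $\check\psi_\ell$ at the scale $R/4$ supplies an extra factor $2^{-P\ell}R^{-Q}$; since $|\ell-n|\le 2$, this matches $2^{-P\max\{n,\ell\}}$. Summing over the at-most-five surviving values of $\ell$, and noting that all remaining terms of $\sum_\ell 2^{-P\max\{n,\ell\}}\|\psi_\ell^{Op}\varphi\|_{L_p}$ are non-negative, closes the estimate.

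The main technical obstacle is ensuring that the pointwise bound on $(1-\rho)\varphi_\ell$ in the far region is recorded in terms of the single $\|\psi_\ell^{Op}\varphi\|_{L_p}$ rather than the cruder $\|\varphi\|_{L_p}$: the latter, combined with $\|\varphi\|_{L_p}\le\sum_{\ell'}\|\psi_{\ell'}^{Op}\varphi\|_{L_p}$, would produce a sum without the $2^{-P\max\{n,\ell\}}$ weighting and hence fail to match the statement. One must therefore treat each frequency component separately, using the specific frequency-$\ell$ localisation of $\varphi_\ell$ (and the Bernstein-type bounds for functions with Fourier support in $\supp\psi_\ell$) in place of a blind triangle inequality.
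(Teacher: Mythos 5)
Your near-part estimate is correct, and so is the reduction, via the vanishing of $\psi_n^{Op}\psi_\ell^{Op}$ for $|n-\ell|\ge 3$, to the terms $|\ell-n|\le 2$. But the far-part estimate contains a genuine gap, and the resolution you sketch in your last paragraph does not repair it. Concretely, the only pointwise control on $\varphi_\ell(z)=\int_{\supp\varphi}\check\psi_\ell(z-y)\varphi(y)\,dy$ at distance $\geq R/4$ from $\supp\varphi$ passes through $\|\varphi\|_{L_1(K)}$, since the tail of $\varphi_\ell$ far from $K$ is a weighted average of \emph{all} of $\varphi$. Bernstein gives $\|\varphi_\ell\|_{L_\infty}\lesssim 2^{\ell d/p}\|\varphi_\ell\|_{L_p}$, which says nothing about the tail of $\varphi_\ell$ relative to its bulk, and there is no inequality $\|\varphi\|_{L_1}\lesssim\|\varphi_\ell\|_{L_p}$: for a fixed smooth bump $\varphi$, $\|\varphi_\ell\|_{L_p}\to 0$ super-polynomially as $\ell\to\infty$ while $\|\varphi\|_{L_1}$ stays fixed. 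So the far part leaves you with $C\,2^{-Pn}R^{-Q}\|\varphi\|_{L_p}$, and dominating $\|\varphi\|_{L_p}$ by $\sum_{\ell'}\|\varphi_{\ell'}\|_{L_p}$ produces $C\,2^{-Pn}R^{-Q}\sum_{\ell'}\|\varphi_{\ell'}\|_{L_p}$, which is strictly \emph{larger} than the right-hand side of \eqref{eqn:decay} (the $\ell'>n$ terms carry the weight $2^{-Pn}$ instead of the required $2^{-P\ell'}$). Thus the proposal does not prove the statement.

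The structural reason for the failure is that invoking the near-orthogonality $\psi_n^{Op}\psi_\ell^{Op}=0$ (for $|\ell-n|>2$) \emph{before} localising in space destroys precisely the information one needs. A working route is to first fix a cutoff $\phi\in C_0^\infty$ with $\phi\equiv 1$ on $\supp\varphi$ and $\supp\phi\subset\supp\varphi+B(0,\epsilon/4)$ (with $\|\phi\|_{C^k}\lesssim_k\epsilon^{-k}$), and to decompose $\psi_n^{Op}\varphi=\sum_{\ell'}\psi_n^{Op}(\phi\,\varphi_{\ell'})$ over \emph{all} $\ell'\in\integer_+$. Then $\psi_n^{Op}(\phi\,\varphi_{\ell'})(x)=\int\check\psi_n(x-z)\phi(z)\varphi_{\ell'}(z)\,dz$ is estimated by H\"older against $\|\varphi_{\ell'}\|_{L_p}$, using the compact support of $\phi$ so that $|x-z|\geq 3R/4$; this alone yields $C_M2^{n(d-M)}R^{d/p'-M}\|\varphi_{\ell'}\|_{L_p}$, which is enough for $\ell'\leq n$. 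For $\ell'>n$ one needs, in addition, the near-orthogonality of the Fourier supports of $\psi_n$ (scale $2^n$) and of $\tilde\psi_{\ell'}$ (scale $2^{\ell'}$) after smearing by $\hat\phi$. Writing $\tilde\psi_{\ell'}(\eta)=|\eta|^{2M_0}a_{\ell'}(\eta)$ with $a_{\ell'}$ uniformly of size $2^{-2M_0\ell'}$ on the annulus and integrating by parts $M_0$ times in $z$ in the kernel of $\psi_n^{Op}M_\phi\tilde\psi_{\ell'}^{Op}$ trades each Laplacian for a factor $2^{-2\ell'}\cdot 2^{2n}$ (or a derivative of $\phi$), yielding the missing gain $2^{-2M_0(\ell'-n)}$ while preserving the $R$--decay from the Schwartz bounds on derivatives of $\check\psi_n$. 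Interpolating (or equivalently splitting the integrations by parts) between this Fourier-side gain and the spatial gain, one obtains $\|T_{n,\ell'}(x,\cdot)\|_{L_{p'}}\le C\,2^{-P\max\{n,\ell'\}}R^{-Q}$, and summing over $\ell'$ gives \eqref{eqn:decay}. This is the route behind \cite[Lemma 4.1]{BT1}; the near/far splitting of $\varphi_\ell$ itself is not the right device.
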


\begin{proof}[Proof of Lemma~\ref{lm:CsU}]
We may assume for both claims that $u>t$ is not an integer.
Then (see e.g. \cite[\S 1.3.4, Rk. 3, and \S 2.3.2]{Trie})  the $C^u$ norm is equivalent
 to the  norm 
$
\|\varphi\|_{C^u_*}:= \sup_{n\ge 0}\left(2^{un} \|\psi_n^{Op}\varphi\|_{L_\infty}\right)
$.

Let $\tilde K$ be a compact neighbourhood of $K$.
For the first claim, recalling \eqref{forlater}, since 
$$\|1_{\tilde K}\tilde \varphi\|^s_{p,q,\Gamma}\le C \|1_{\tilde K}\tilde\varphi\|_{L_p(\Gamma)}\le C_{\tilde K} \|\tilde \varphi\|_{L_\infty}\, ,
$$ 
and $u>t$, we find, using Young's inequality in $L_p$ that
\begin{equation}\label{aa}
2^{tn} \|1_{\tilde K} \psi_{n}^{Op}\varphi\|^s_{p,q,\Gamma} 
\le  C(u,\tilde K)\cdot   \|\varphi\|_{C^u_*}\quad\mbox{for any $n$
and $\Gamma$}\, .
\end{equation}
Using Lemma \ref{lm:decay} for large enough $P$ and $Q$,
 we 
estimate 
$$
2^{tn}  \|1_{\real\setminus \tilde K} \psi_{n}^{Op}\varphi\|_{L_p(\Gamma)}
\le \sum_\ell 2^{-(P-t)\ell}  \| \psi_{\ell}^{Op}\varphi\|_{L_\infty}\, .
$$
Since
$u > t$, we obtain
\begin{equation}\label{bb}
2^{tn} \|1_{\real\setminus \tilde K}\psi_{n}^{Op}\varphi\|^s_{p,q,\Gamma}\le C(u,\tilde K)\cdot   \|\varphi\|_{C^u_*}\quad\mbox{for any $n$ and $\Gamma$}\, .
\end{equation}
Clearly \eqref{aa} and \eqref{bb}  imply the first claim.

We move to the second claim.
Decompose $\varphi\in C^{\infty}(K)$ and $v\in C^u(K)$ with $u>|s+t|$ as 
$\varphi=\sum_{n} \psi_{n}^{Op}\varphi$ and 
$v=\sum_{m\ge 0} \psi_m^{Op} v$. 
We get
\[
\int \varphi\cdot v \D x =\sum_{n} \sum_{m:|m-n|\le 1}\int \psi_{n}^{Op}\varphi(x)  \cdot \psi_{m}^{Op}v(x) \D x \, , 
\]
by Parseval's theorem.
We decompose the integral above into the sum of $\int_{\tilde K}$ and $\int_{\real^d\setminus \tilde K}$.
Up to changing
coordinates, we can assume that $\Gamma=\real^{d_s} \times \{0\}$, and  that every
translated hyperplane $ \real^{d_s}\times \{x_{d_u}\} $ lies in $\FF(\cone_+)$.
Taking $Q_1>d_s$ and $Q_2>d_u$,  Lemma \ref{lm:decay} 
gives a constant $C_{Q_1,Q_2,\tilde K}$ so that for all $x$ with $d(x, K)\ge d(K,\tilde K)$,
in the new coordinates,
$$
|\psi_{m}^{Op}v(x)|
\le C_{Q_1,Q_2,\tilde K} \sum_\ell 2^{-P\ell}\frac{\|\psi_\ell^{Op}v\|_{L_\infty}}
{(1+\|x_-\|)^{Q_1}(1+\|x_+\|)^{Q_2}}  \, .
$$
Therefore, if $|m-n|\le 1$ and $|m_s-n|\le 1$,  recalling that $u>|s+t|=-s-t$,
\begin{align*}
& |\int_{\real^d\setminus \tilde K} \psi_{n}^{Op}\varphi(x)  \cdot \psi_{m}^{Op}v(x)| \D x
\\
&\quad\le 
\sum_{\ell \ge m} 2^{-P\ell}\int_{\real^{d_s}} \frac{1}{(1+\|x_-\|)^{Q_1}}
|\int_{\real^{d_u}} \frac{1}{(1+\|x_-\|)^{Q_1}}
\psi_{n}^{Op}\varphi(x) \cdot \psi_{\ell}^{Op}v(x)\D x_+ |\D x_-
\\
&\quad \le C\sum_{\ell \ge m}  2^{\ell(s+t-P)}\|v\|_{C^u} \int_{\real^{d_u}} \frac{1}{(1+\|x_+\|)^{Q_2}}
\int_{\real^{d_s}} \frac{|\psi_{n}^{Op}\varphi(x) |}{(1+\|x_-\|)^{Q_1}} \D x_+ \D x_- \\
&\quad  \le C'2^{m(s+t)}\|v\|_{C^u}
\int_{\real^{d_u}} \frac{1}{(1+\|x_+\|)^{Q_2}} \D x_+ \cdot 
\sup_{x_+} \int_{\real^{d_s}} \frac{|\psi_{n}^{Op}\varphi(x) |}{(1+\|x_-\|)^{Q_1}} \D x_+ \\
&\quad\le C'2^{nt}\|v\|_{C^u}
\int_{\real^{d_u}} \frac{1}{(1+\|x_+\|)^{Q_2}} \D x_+ \cdot 2^{ns}
\sup_{x_+} \int_{\real^{d_s}} \frac{|\psi_{n}^{Op}\varphi(x) |}{(1+\|x_-\|)^{Q_1}} \D x_+
\, .
\end{align*}
Since the foliation is trivial we have $\psi_{n}^{Op}\varphi= \sum_{n_s=n-2}^{n+2}
\psi_{n}^{Op} (\psi_{n_s}^{(d_s)})^{Op} \varphi$, so that the right-hand side above is bounded by
\begin{align*}
&\qquad  \le \tilde C 2^{nt}\|v\|_{C^u}
\int_{\real^{d_u}} \frac{1}{(1+\|x_+\|)^{Q_2}} \D x_+ \cdot 
\sup_{\Gamma} \|\psi_{n}^{Op}\varphi \|^s_{p,q,\Gamma} 
\le C \|\varphi\|_{\UU^{\cone_\pm,t,s}_p}
\|v\|_{C^u} 
\, .
\end{align*}
The integral over $\tilde K$ is easier to estimate, and we obtain 
\[
\left|\int \varphi\cdot v\D x\right|\le C \|\varphi\|_{\UU^{\cone_\pm,t,s}_p}
\|v\|_{C^u} \, . 
\]
\end{proof}

\end{document}